\documentclass[12pt, a4paper]{article}

\title{The kernel of chromatic quasisymmetric functions on graphs and hypergraphic polytopes}
\author{Raul Penaguiao\footnote{e-mail:raul.penaguiao@math.uzh.ch}\blfootnote{Institute of Mathematics, University of Zurich, Winterthurerstrasse 190, Zurich, CH - 8057.}\blfootnote{{\bf Keywords:} generalized permutahedra, Hopf monoids, chromatic symmetric function}\blfootnote{2010 AMS Mathematics Subject Classification 2010: 05E05}}
\date{March 27, 2020}

%\keywords{chromatic symmetric function, generalized permutahedra, Hopf monooids, tree conjecture}
%\address{Institute of Mathematics, University of Zurich, Winterthurerstrasse 190, 8057, Zurich, Switzerland.}
%\email{raul.penaguiao@math.uzh.ch}
%\subjclass[2010]{05E05}

%\usepackage{natbib}
\usepackage{graphicx}
\usepackage{amsmath}
\usepackage{amssymb}
\usepackage{amsthm}
\usepackage[utf8]{inputenc}
\usepackage[english]{babel}
\usepackage{bbold}
\usepackage{tikz-cd}
\usepackage{xr}
\usetikzlibrary{babel}
\usepackage{todonotes}
\usepackage{fullpage}
\usepackage{hyperref}
%\usepackage{bbold}%
%\usepackage{tikz-cd}
%\usepackage{xr}
%\usetikzlibrary{babel}
\usepackage{bm}
\usepackage{wrapfig}
\usepackage[capitalise]{cleveref}

\newtheorem{thm}{Theorem}
\newtheorem{prop}[thm]{Proposition}
\newtheorem{lm}[thm]{Lemma}
\newtheorem{cor}[thm]{Corollary}
\newtheorem{conj}[thm]{Conjecture}

\theoremstyle{remark}
\newtheorem{defin}[thm]{Definition}

\newtheorem{rem}[thm]{Remark}
\newtheorem{smpl}[thm]{Example}
\crefname{lm}{Lemma}{Lemmas}
\crefname{thm}{Theorem}{Theorems}
\crefname{prop}{Proposition}{Propositions}

\newcommand{\hs}[1]{\overline{\mathbf{#1}}}
\newcommand{\lhs}{\overline{E}}
\newcommand{\oPi}{\mathbf{C}}
\newcommand{\opi}{\vec{\boldsymbol{\pi}}}
\newcommand{\otau}{\vec{\boldsymbol{\tau}}}
\newcommand{\olambda}{\vec{\boldsymbol{\gamma}}}
\newcommand{\ogamma}{\vec{\boldsymbol{\gamma}}}
\newcommand{\odelta}{\vec{\boldsymbol{\delta}}}
\newcommand{\gpHa}{\mathbf{GP}}
\newcommand{\gHa}{\mathbf{G}}
\newcommand{\citestan}{\cite[Proposition 3.2]{gebhard99}}
\newcommand{\citeaguHM}{\cite[Definition 12.19]{aguiar10}}
\newcommand{\citeaguWQ}{\cite[Section 17.3.1]{aguiar10}}
\newcommand{\parpi}{\boldsymbol{\pi}}
\newcommand{\partau}{\boldsymbol{\tau}}
\newcommand{\makepar}{\boldsymbol{\lambda}}

\newcommand{\uhsm}{\boldsymbol{\Upsilon }}
\newcommand{\bfa}{\mathbf{a}}
\newcommand{\fff}{\mathcal{K}}
\newcommand{\bff}{\overline{\mathcal{K}}}
%\keywords{Chromatic symmetric function, hypergraphic zonotopes, Combinatorial Hopf algebra}
%\subjclass[2010]{05E05}
\newcommand\blfootnote[1]{%
  \begingroup
  \renewcommand\thefootnote{}\footnote{#1}%
  \addtocounter{footnote}{-1}%
  \endgroup
}

\DeclareMathOperator{\im}{im}
\DeclareMathOperator{\comu}{comu}
\DeclareMathOperator{\Orth}{Orth}
\DeclareMathOperator{\pt}{pt}
\DeclareMathOperator{\Res}{Res}

\DeclareMathOperator{\twist}{twist}
\DeclareMathOperator{\conv}{conv}
\DeclareMathOperator{\spn}{span}
\DeclareMathOperator{\id}{id}
\DeclareMathOperator{\inc}{inc}

\begin{document}

\maketitle

\abstract{
The chromatic symmetric function on graphs is a celebrated graph invariant.
Analogous chromatic maps can be defined on other objects, as presented by Aguiar, Bergeron and Sottile.
The problem of identifying the kernel of some of these maps was addressed by F\'eray, for the Gessel quasisymmetric function on posets.

On graphs, we show that the modular relations and isomorphism relations span the kernel of the chromatic symmetric function.
This helps us to construct a new invariant on graphs, which may be helpful in the context of the tree conjecture.
We also address the kernel problem in the Hopf algebra of generalized permutahedra, introduced by Aguiar and Ardila.
We present a solution to the kernel problem on the Hopf algebra spanned by hypergraphic polytopes, which is a subfamily of generalized permutahedra that contains a number of polytope families.

Finally, we consider the non-commutative analogues of these quasisymmetric invariants, and establish that the word quasisymmetric functions, also called non-commutative quasisymmetric functions, form the terminal object in the category of combinatorial Hopf monoids.
As a corollary, we show that there is no combinatorial Hopf monoid morphism between the combinatorial Hopf monoid of posets and that of hypergraphic polytopes. 
}

%\todo[inline]{Do ctrl+F of TODO}

\section{Introduction}

\subsection*{Chromatic function on graphs}
For a graph $G$ with vertex set $V(G)$, a coloring $f$ of the graph $G$ is a function $f:V(G) \to \mathbb{N} $. A coloring is \textit{proper} in $G$ if no edge is monochromatic.

We denote by $\mathbf{G} $ the graph Hopf algebra, which is a vector space freely generated by the graphs whose vertex sets are of the form $[n ] $ for some $n \geq 0$.
This can be endowed with a Hopf algebra structure, as described by Schmitt in \cite[Chapter 12]{schmitt94}, and also presented below in \cref{sec:halg}.

Stanley defines in \cite{stanley95} the \textit{chromatic symmetric function} of $G$ in commuting variables $ \{x_i\}_{i\geq 1}$ as
\begin{equation}\label{eq:csfdefin}
\Psi_{\gHa}(G) = \sum\nolimits_f x_f \, , 
\end{equation}
where we write $x_f = \prod_{v \in V(G)} x_{f(v)} $, and the sum runs over proper colorings of $G$.
Note that $\Psi_{\mathbf{G}}(G) $ is in the ring \textit{Sym} of symmetric functions. %, which is discussed for instance in \cite[Chapter 7]{stanley00}.
The ring $Sym$ is a Hopf subalgebra of $QSym$, the ring of quasisymmetric functions introduced by Gessel in \cite{gessel84}.
A long standing conjecture in this subject, commonly referred to as the \textit{tree conjecture}, is that if two trees $T_1, T_2$ are not isomorphic, then $\Psi_{\mathbf{G}}(T_1) \neq \Psi_{\mathbf{G}}(T_2)$.

When $V(G) = [n]$, the natural ordering on the vertices allows us to consider a non-commutative analogue of $
\Psi_{\mathbf{G}}$, as done by Gebhard and Sagan in \cite{gebhard99}.
They define the chromatic symmetric function on non-commutative variables $ \{\mathbf{a}_i\}_{i\geq 1}$ as
$$ \boldsymbol{\Upsilon }_{\gHa} (G) = \sum\nolimits_f \mathbf{a}_f \, , $$
where we write $\mathbf{a}_f = \bfa_{f(1)} \dots \bfa_{f(n)} $, and we sum over the proper colorings $f$ of $G$.

Note that $ \boldsymbol{\Upsilon }_{\gHa} (G) $ is homogeneous and symmetric in the variables $\{ \mathbf{a}_i\}_{i\geq 1}$.
Such power series are called \textit{word symmetric functions}.
The ring of word symmetric functions, \textbf{WSym} for short, was introduced in \cite{rosas06}, and is sometimes called the ring of symmetric functions in non-commutative variables, or \textbf{NCSym}, for instance in \cite{bergeron09}.
Here we adopt the former name to avoid confusion with the ring of non-commutative symmetric functions.

In this paper we describe generators for $\ker \Psi_{\gHa} $ and $\ker \boldsymbol{\Upsilon }_{\gHa}$.
A similar problem was already considered for posets.
In \cite{feray15}, F\'eray studies $\Psi_{\mathbf{Pos}}$, the Gessel quasisymmetric function defined on the poset Hopf algebra, and describes a set of generators of its kernel.
%\todo{notational hint on general construction ok?}

Some elements of the kernel of $\Psi_{\gHa}$ have already been constructed in \cite{paquet13} by Guay-Paquet and independently in \cite{orellana14} by Orellana and Scott.
These relations, called \textit{modular relations}, extend naturally to the non-commutative case.
We introduce them now.

Given a graph $G$ and an edge set $E $ that is disjoint from $E(G)$, let $G\cup E$ denote the graph $G$ with the edges in $E$ added.
%This operation is called \textit{edge addition}.
%In \cite{paquet13} and \cite{orellana14}, it was observed that 
If we have edges $e_3\in G$ and $e_1, e_2 \not \in G$ such that $ \{ e_1, e_2, e_3 \}$ forms a triangle, then we also have
\begin{equation}\label{eq:modrelgraphs}
\boldsymbol{\Upsilon }_{\gHa}(G) - \boldsymbol{\Upsilon }_{\gHa}(G \cup \{e_1\} ) - \boldsymbol{\Upsilon }_{\gHa}(G \cup \{e_2\} ) + \boldsymbol{\Upsilon }_{\gHa}(G \cup \{e_1, e_2\} ) = 0 \, .
\end{equation}

\begin{figure}[h]
	\centering
	\includegraphics[width=238pt]{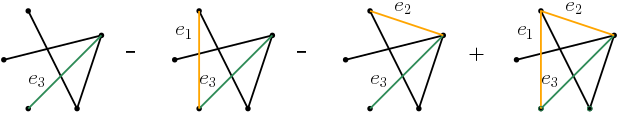}
	\caption{\label{fig:modrel}Example of a modular relation.}
\end{figure}
We call the formal sum $G - G \cup \{e_1\} - G \cup \{e_2\} + G \cup \{e_1, e_2\}$ in $\mathbf{G}$ a \textit{modular relation on graphs}.
An example is given in \cref{fig:modrel}.
Our first result is that these modular relations span the kernel of the chromatic symmetric function in non-commuting variables.
The structure of the proof also allows us to compute the image of the map.

\begin{thm}[Kernel and image of $\boldsymbol{\Upsilon }_{\gHa} : \mathbf{G } \to \mathbf{WSym} $]\label{thm:graphkernel}
The modular relations span $\ker \boldsymbol{\Upsilon }_{\gHa}$.
The image of $\boldsymbol{\Upsilon }_{\gHa} $ is \textbf{WSym}.
\end{thm}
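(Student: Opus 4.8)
The plan is to prove the two assertions in tandem by setting up a convenient basis for $\mathbf{G}$ adapted to the chromatic map. First I would establish the image statement, since it is the easier half and it guides the choice of basis. Recall that $\mathbf{WSym}$ has a basis indexed by set partitions — say the monomial basis $\{\mathbf{m}_\pi\}$ — and that $\boldsymbol{\Upsilon}_{\gHa}$ expands in this basis with a combinatorially transparent leading term. Concretely, for a graph $G$ on $[n]$, the coefficient of the monomial $\mathbf{a}_f$ with $f$ injective (all colors distinct) is always $1$, so $\boldsymbol{\Upsilon}_{\gHa}(G)$ always contains the term $\mathbf{m}_{\hat{0}}$ (the partition into singletons) with coefficient $1$. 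More usefully, I would show that the empty graph $\overline{K}_n$ maps to a power-sum-type element, and that by taking complements / using inclusion–exclusion over edge subsets one can solve triangularly: the family $\{\boldsymbol{\Upsilon}_{\gHa}(\overline{K}_S \sqcup \text{stuff})\}$ already realizes enough elements to span all of $\mathbf{WSym}_n$. The cleanest route is to exhibit, for each set partition $\pi$ of $[n]$, a graph $G_\pi$ (for instance a disjoint union of complete graphs on the blocks of $\pi$, suitably interpreted) such that $\{\boldsymbol{\Upsilon}_{\gHa}(G_\pi)\}_\pi$ is unitriangular with respect to the monomial basis under refinement order; surjectivity follows immediately.

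For the kernel, the strategy is a dimension count combined with an explicit reduction algorithm. Let $\mathcal{M}_n \subseteq \mathbf{G}_n$ be the span of the modular relations on graphs with vertex set $[n]$. Since $\mathcal{M}_n \subseteq \ker \boldsymbol{\Upsilon}_{\gHa}$ is immediate from \eqref{eq:modrelgraphs}, it suffices to show $\dim \mathbf{G}_n / \mathcal{M}_n \le \dim \mathbf{WSym}_n$, i.e. that the quotient is spanned by at most (number of set partitions of $[n]$) many classes; combined with surjectivity of $\boldsymbol{\Upsilon}_{\gHa}$ this forces equality and hence $\mathcal{M}_n = \ker \boldsymbol{\Upsilon}_{\gHa}$. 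To bound the quotient dimension, I would use the modular relation as a rewriting rule: given a graph $G$ and a "chord" configuration — vertices $u,v,w$ with $uw \in E(G)$ but at least one of $uv, vw$ absent — the relation lets me replace $G$ by a signed sum of graphs each of which is, in a suitable well-founded order, "closer to" a disjoint union of cliques. Iterating, every graph is congruent mod $\mathcal{M}_n$ to a linear combination of \emph{disjoint unions of complete graphs}, and these are naturally indexed by set partitions of $[n]$. That gives the required upper bound $\dim \mathbf{G}_n/\mathcal{M}_n \le |\Pi_n| = \dim \mathbf{WSym}_n$.

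The main obstacle is making the rewriting terminate and showing the clique-partition representatives are reached — i.e. choosing the right monovariant. A natural candidate is to order graphs first by the number of non-edges inside connected components (or by $\sum_C \binom{|C|}{2} - |E(C)|$ summed over components $C$), but one must check that each application of a modular relation strictly decreases it on the relevant term while the other terms have even fewer "defects," so Noetherian induction applies. A subtlety is that adding edges $e_1, e_2$ can \emph{merge} components, so the monovariant must be chosen to handle the interaction between "number of components" and "number of missing edges within components" — I expect the clean invariant is the lexicographic pair (number of connected components that are not complete, total number of missing internal edges), or equivalently an induction on $\binom{n}{2} - |E(G)|$ refined by component structure. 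Once the reduction terminates at disjoint unions of cliques, the two halves fit together: surjectivity shows the $|\Pi_n|$ classes $[G_\pi]$ are linearly independent in the quotient (their images under $\boldsymbol{\Upsilon}_{\gHa}$ are), so they form a basis of $\mathbf{G}_n/\mathcal{M}_n$, the inequality is an equality, and $\ker \boldsymbol{\Upsilon}_{\gHa} = \mathcal{M}_n$ as claimed.
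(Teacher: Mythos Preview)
Your overall architecture --- triangularity of $\{\boldsymbol{\Upsilon}_{\gHa}(G_\pi)\}_\pi$ for surjectivity, plus a rewriting procedure modulo $\mathcal{M}_n$ to bound $\dim \mathbf{G}_n/\mathcal{M}_n$ --- is exactly the paper's strategy (packaged there as \cref{lm:kernandimdesc}). However, you have the terminal family backwards, and this is a genuine error, not a cosmetic one.

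The modular relation expresses $G$ as a signed combination of $G\cup\{e_1\}$, $G\cup\{e_2\}$, $G\cup\{e_1,e_2\}$, all of which have \emph{strictly more} edges than $G$. So the rewriting moves toward graphs with many edges, and the terminal graphs are those admitting no triangle $\{e_1,e_2,e_3\}$ with $e_3\in E(G)$ and $e_1,e_2\notin E(G)$. Unwinding, this says: if $\{u,v\}\notin E(G)$ and $\{v,w\}\notin E(G)$ then $\{u,w\}\notin E(G)$, i.e.\ non-adjacency is transitive, i.e.\ $G^c$ is a disjoint union of cliques. The terminal graphs are therefore the \emph{complete multipartite} graphs $K_\parpi^c$, not the disjoint unions of cliques $K_\parpi$ you propose. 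Indeed, any $K_\parpi$ with a block of size $\geq 2$ and a vertex outside it still admits such a triangle, so your proposed representatives are not fixed points of the rewriting. Relatedly, your ``chord'' description requires only one of $uv,vw$ absent, but the modular relation needs both absent.

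Once you use $K_\parpi^c$, everything simplifies. The monovariant is just $|E(G)|$: every replacement graph has strictly more edges, so induction on $\binom{n}{2}-|E(G)|$ terminates with no need for the lexicographic refinements you worry about. And the triangularity is immediate: a set partition $\partau$ is proper in $K_\parpi^c$ iff every block of $\partau$ lies inside a block of $\parpi$, i.e.\ $\partau \leq \parpi$, so $\boldsymbol{\Upsilon}_{\gHa}(K_\parpi^c) = \sum_{\partau \leq \parpi} \mathbf{m}_\partau$ is unitriangular in the refinement order. By contrast, proper partitions for $K_\parpi$ are those whose blocks are transversal to $\parpi$, which is not a refinement condition and does not give triangularity in any obvious way.
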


Two graphs $G_1, G_2$ are said to be isomorphic if there is a bijection between the vertices that preserves edges.
For the commutative version of the chromatic symmetric function, if two isomorphic graphs $G_1, G_2$ are given, it holds that $\Psi_{\gHa}(G_1)$ and $\Psi_{\gHa}(G_2)$ are the same.	
The formal sum in $\mathbf{G}$ given by $G_1 - G_2$ is called an \textit{isomorphism relation on graphs}.

\begin{thm}[Kernel and image of $\Psi_{\gHa} : \mathbf{G } \to Sym $]\label{thm:graphcomukernel}
The modular relations and the isomorphism relations generate the kernel of the commutative chromatic symmetric function $\Psi_{\gHa} $.
The image of $\Psi_{\gHa} $ is \textit{Sym}.
\end{thm}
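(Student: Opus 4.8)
The plan is to bootstrap from \cref{thm:graphkernel} via a commutative triangle. Write $\mathsf{ab}\colon\mWSym\to Sym$ for the abelianization, the graded algebra morphism sending each non-commuting variable $\mathbf{a}_i$ to the commuting variable $x_i$ (it restricts to a well-defined map on word-symmetric functions); comparing the defining sums immediately gives $\Psi_{\gHa}=\mathsf{ab}\circ\uhsm$. I will use the following description on the degree-$n$ part: $\mWSym_n$ has the monomial basis $\{\mathbf{m}_\pi\}$ indexed by set partitions $\pi$ of $[n]$, and abelianizing the defining sum of $\mathbf{m}_\pi$ yields $\mathsf{ab}(\mathbf{m}_\pi)=\widetilde m_{\lambda(\pi)}$, the augmented monomial symmetric function (a positive multiple of $m_{\lambda(\pi)}$) attached to the integer partition $\lambda(\pi)$ recording the block sizes of $\pi$. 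Since $\{\widetilde m_\lambda\}_{\lambda\vdash n}$ is a basis of $Sym_n$, the map $\mathsf{ab}$ is surjective; together with surjectivity of $\uhsm$ from \cref{thm:graphkernel} this shows $\im\Psi_{\gHa}=Sym$.

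For the kernel, one inclusion is free: modular relations lie in $\ker\uhsm\subseteq\ker\Psi_{\gHa}$ since $\Psi_{\gHa}=\mathsf{ab}\circ\uhsm$, and an isomorphism relation $G_1-G_2$ is annihilated because $\Psi_{\gHa}$ depends only on the isomorphism type. For the reverse inclusion I first note, by the standard linear-algebra fact for a linear map carrying a basis onto a basis by $\mathbf{m}_\pi\mapsto\widetilde m_{\lambda(\pi)}$, that $\ker(\mathsf{ab}|_{\mWSym_n})$ is spanned by the differences $\mathbf{m}_\pi-\mathbf{m}_\tau$ over set partitions of $[n]$ with $\lambda(\pi)=\lambda(\tau)$; such a pair is always related by a permutation of the ground set, i.e.\ $\tau=\sigma\cdot\pi$ for some $\sigma\in S_n$.

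The crucial ingredient is equivariance of $\uhsm$. Relabeling vertices defines an $S_n$-action on $\gHa[n]$, namely $\sigma\cdot G=([n],\{\sigma(i)\sigma(j):ij\in E(G)\})$, and $S_n$ acts on $\mWSym_n$ by $\sigma\cdot\mathbf{m}_\pi=\mathbf{m}_{\sigma\cdot\pi}$; since a coloring of $\sigma\cdot G$ is proper exactly when its precomposition with $\sigma$ is proper on $G$, the defining sum gives $\uhsm(\sigma\cdot G)=\sigma\cdot\uhsm(G)$ (equivalently, writing $\uhsm(G)=\sum_{\pi}\mathbf{m}_\pi$ over set partitions $\pi$ stable in $G$, stability is transported by $\sigma$). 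Now given $y\in\ker\Psi_{\gHa}$ we have $\uhsm(y)\in\ker(\mathsf{ab}|_{\mWSym})$, so it is a combination of generators $\mathbf{m}_\pi-\mathbf{m}_{\sigma\cdot\pi}$. For each generator, pick, using surjectivity of $\uhsm$, a $w\in\gHa[n]$ with $\uhsm(w)=\mathbf{m}_\pi$; then $\uhsm(w-\sigma\cdot w)=\mathbf{m}_\pi-\mathbf{m}_{\sigma\cdot\pi}$, and expanding $w=\sum_G a_G\,G$ exhibits $w-\sigma\cdot w=\sum_G a_G\,(G-\sigma\cdot G)$ as a combination of isomorphism relations. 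Summing these preimages produces a combination $r$ of isomorphism relations with $\uhsm(r)=\uhsm(y)$, whence $y-r\in\ker\uhsm$ is a combination of modular relations by \cref{thm:graphkernel}; hence $y$ lies in the span of modular and isomorphism relations.

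I expect the only genuinely delicate points to be the explicit computation $\mathsf{ab}(\mathbf{m}_\pi)=\widetilde m_{\lambda(\pi)}$ and the equivariance of $\uhsm$ — both short once the monomial bases are in place — with everything else amounting to bookkeeping around \cref{thm:graphkernel}. One should also observe that modular relations, isomorphism relations, $\ker\Psi_{\gHa}$, and all the maps involved are homogeneous, so it is legitimate to carry out the argument one degree at a time, as above.
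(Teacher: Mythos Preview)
Your argument is correct and takes a genuinely different route from the paper's. The paper applies its general linear-algebra \cref{lm:kernofcomp}: having already shown in \cref{thm:graphkernel} that the basis elements $K_{\parpi}^c$ play the role of the distinguished set $I$, it reduces everything to checking that the family $r_{\lambda}:=\Psi_{\gHa}(K_{\parpi}^c)$ (well defined for $\lambda=\lambda(\parpi)$) is linearly independent in $Sym_n$, which it verifies by an upper-triangularity computation against the monomial basis under the coarsening order on partitions. This yields, as a byproduct, a new basis $\{r_\lambda\}_{\lambda\vdash n}$ of $Sym_n$.

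You instead bypass the $K_{\parpi}^c$ entirely: you compute $\ker(\mathsf{ab})$ on $\mWSym_n$ directly as the span of $\mathbf{m}_{\parpi}-\mathbf{m}_{\partau}$ with $\lambda(\parpi)=\lambda(\partau)$, and then use $S_n$-equivariance of $\uhsm_{\gHa}$ together with its surjectivity to lift each such difference to a combination of isomorphism relations. This is more conceptual and avoids the triangularity calculation; the paper's approach, on the other hand, is what makes its \cref{lm:kernofcomp} reusable verbatim for the hypergraphic-polytope case in \cref{thm:nestocomukernel}, and it produces the explicit chromatic basis $\{r_\lambda\}$ along the way. Both arguments ultimately rest on \cref{thm:graphkernel}, but yours trades the combinatorial analysis of $\Psi_{\gHa}(K_{\parpi}^c)$ for the symmetry observation.
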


It was already noticed that $\Psi_{\gHa}$ is surjective.
For instance, in \cite{willigenburg15}, several bases of $Sym_n $ are constructed, which are the chromatic symmetric function of graphs, namely are of the form $\{\Psi_{\gHa}( G_{\lambda}) | \lambda \vdash n \}$ for suitable graphs $G_{\lambda }$ on $n$ vertices.
Here we present a new such family of graphs.

At the end of \cref{sec:3} we introduce a new graph invariant $\tilde{\Psi}$, called the \textit{augmented chromatic invariant}.
We observe that modular relations on graphs are in the kernel of the augmented chromatic invariant.
It follows from \cref{thm:graphcomukernel} that $\ker \Psi_G = \ker \tilde{\Psi} $.
%Indeed, we will see that $\ker \tilde{\Psi} = \ker \Psi_{\gHa}$.
This reduces the tree conjecture in $\Psi_{\gHa}$ to a similar conjecture on this new invariant $\tilde{\Psi}$, which contains seemingly more information.
\medskip

%In \cite{aguiar17}, Aguiar and Ardila define the Hopf algebra of generalized permutahedra, which we will introduce shortly and can easily be regarded as a combinatorial Hopf algebra.

\subsection*{Generalized Permutahedra}

Another goal of this paper is to look at other kernel problems of chromatic flavor.
In particular, we establish similar results to \cref{thm:graphkernel,thm:graphcomukernel} in the combinatorial Hopf algebra of hypergraphic polytopes, which is a Hopf subalgebra of generalized permutahedra.

Generalized permutahedra form a family of polytopes that include permutahedra, associahedra and graph zonotopes.
This family has been studied, for instance, in \cite{postnikov08}, and we introduce it now.

The Minkowski sum of two polytopes $\mathfrak{a}, \mathfrak{b}$ is set as $\mathfrak{a}+ \mathfrak{b} = \{a + b |\,  a \in \mathfrak{a}, \, b\in \mathfrak{b} \}$.
The Minkowski difference $\mathfrak{a}- \mathfrak{b}$ is only sometimes defined: it is the unique polytope $\mathfrak{c}$ that satisfies $\mathfrak{b} + \mathfrak{c} = \mathfrak{a}$, if it exists.
We denote as $\sum_i \mathfrak{a}_i$ the Minkowski sum of several polytopes.

If we let $\{e_i | i \in I \}$ be the canonical basis of $\mathbb{R}^I$, a \textit{simplex} is a polytope of the form $\mathfrak{s}_J  = conv\{ e_j | j \in J \} $ for non-empty $J \subseteq  I$.
A generalized permutahedron in $\mathbb{R}^I$ is a polytope given by real numbers $\{a_J \}_{\emptyset \neq J \subseteq I}$ as follows:
Let $A_+ = \{J |   a_J>0 \} $ and $ A_- = \{J |   a_J<0 \}  $.
%The real numbers split $2^{[n]} \setminus \emptyset $ into $A_+, A_-, A_0$ according to its sign.
Then, the corresponding generalized permutahedron is
\begin{equation}\label{eq:gpcoefs}
\mathfrak{q} = \left( \sum_{J \in A_+ } a_J \mathfrak{s}_J \right) - \left( \sum_{J\in A_-} |a_J| \mathfrak{s}_J \right) \, ,
\end{equation}
if the Minkowski difference exists.
We identify a generalized permutahedron $\mathfrak{q}$ with the list $\{a_J \}_{\emptyset \neq J \subseteq I}$.
Note that not every list of real numbers will give us a generalized permutahedron, since the Minkowski difference is not always defined.

In \cite{postnikov09}, generalized permutahedra are introduced in a different manner.
A polytope is said to be a generalized permutahedron if it can be described as 
$$ \mathfrak{q} = \left\{ x \in \mathbb{R}^n | \sum_{ i \in I} x_i \leq z_I \text{ for } I \subsetneq [n] \text{ non-empty }; \sum_{i\in [n] } x_i = z_{[n]} \right\}  \, ,$$
for reals $\{z_J \}_{\emptyset \neq J \subseteq I}$.%\todo[inline]{subscript in quantifier?}

A third definition of generalized permutahedra is present in \cite{aguiar17}.
Here, a generalized permutahedron is a polytope whose normal fan coarsens the one of the permutahedron.
These three definitions are equivalent, and a discussion regarding this can be seen in \cref{sec:2gp}.
%\todo[inline]{appendix?}

A \textit{hypergraphic polytope} is a generalized permutahedron where the coefficients $a_J$ in \eqref{eq:gpcoefs} are non-negative.
For a hypergraphic polytope $\mathfrak{q}$, we denote by $\mathcal{F}(\mathfrak{q}) \subseteq 2^I\setminus  \{ \emptyset \} $ the family of sets $J\subseteq I$ such that $a_J > 0$.
A \textit{fundamental hypergraphic polytope} on $\mathbb{R}^I$ is a hypergraphic polytope $\sum_{\emptyset \neq J\subseteq I } a_J \mathfrak{s}_J  $ such that $a_J \in \{0, 1\}$.
Finally, for a set $A \subseteq 2^I\setminus \{ \emptyset \} $, we write $\mathcal{F}^{-1}(A)$ for the hypergraphic polytopes $\mathfrak{q} = \sum_{J\in A} \mathfrak{s}_J $.
Note that a fundamental hypergraphic polytope is of the form $\mathcal{F}^{-1}(A)$ for some family $A \subseteq 2^{I} \setminus  \{ \emptyset \}$.
% for the unique hypergraphic polytope $\mathfrak{q}$ that satisfies $\mathcal{F}(\mathfrak{q}) = A$.
%Note that in this case, every subset $A \subseteq 2^I \setminus \emptyset $ will give rise to a hypergraphic polytope.

One can easily note that the hypergraphic polytope $\mathfrak{q} $ and $\mathcal{F}^{-1}(\mathcal{F}(\mathfrak{q})) $ are, in general, distinct, so some care will come with this notation.
However, the face structure is the same, and we give an explicit combinatorial equivalence in \cref{prop:nestosmplmodrel}.
If $\mathfrak{q} $ is a hypergraphic polytope such that $\mathcal{F}(\mathfrak{q}) $ is a building set, then $ \mathfrak{q}$ is called a \textit{nestohedron}, see \cite{pilaud17} and \cite{aguiar17}.
Hypergraphic polytopes and its subfamilies are studied in \cite[Part 4]{aguiar17}, where they are also called $y$-positive generalized permutahedra.

In \cite{aguiar17}, Aguiar and Ardila define \textbf{GP}, a Hopf algebra structure on the linear space generated by generalized permutahedra in $\mathbb{R}^n$ for $n\geq 0$.
The Hopf subalgebra \textbf{HGP} is the linear subspace generated by hypergraphic polytopes.
We warn the reader of the use of the same notation for Minkowski operations (Minkowski sum and dilations) and for algebraic operations in \textbf{GP}. However, the distinction should be clear from the context.
In \cite{doker11}, generalized permutahedra are also debated.

In \cite{grujic17}, Gruji{\'c} introduced a quasisymmetric map in generalized permutahedra $\Psi_{\mathbf{ GP }}: \mathbf{ GP } \to \mathit{QSym }$ that was extended to a weighted version in \cite{marko17}.
For a polytope $\mathfrak{q}\subseteq \mathbb{R}^I$, Gruji{\'c} defines a function $f: I \to \mathbb{N}$ to be $\mathfrak{q}$-generic if the face of $\mathfrak{q}$ that minimizes $\sum_{i\in I} f(i) x_i $, denoted $\mathfrak{q}_f$, is a point.
Equivalently, $f$ is $\mathfrak{q}$-generic if it lies in the interior of the normal cone of some vertex of $\mathfrak{q}$.
Then, Gruji{\'c} defines for a set $\{ x_i \}_{i\geq 1}$ of commutative variables, the quasisymmetric function:
\begin{equation}\label{eq:C1gperchrmor}
    \Psi_{\mathbf{ GP }}(\mathfrak{q}) = \sum_{f \text{ is }\mathfrak{q}\text{-generic} } x_f  \,.
\end{equation}

This quasisymmetric function is called the \textit{chromatic quasisymmetric function on generalized permutahedra}, or simply \textit{chromatic quasisymmetric function}.
%With the Hopf algebra structure imposed on $\mathbf{GP} $, it is easy to observe that if we consider the character $\eta (\mathfrak{q}) = \mathbb{1}[\mathfrak{q} \text{ is a point} ]$, then $\Psi_{\mathbf{GP}}$ is the universal map from $\mathbf{GP}$ to $QSym$.

We discuss now a non-commutative version of $\Psi_{\mathbf{GP}}$, where we establish an analogue of \cref{thm:graphkernel} for hypergraphic polytopes.
For that, consider the Hopf algebra of word quasisymmetric functions \textbf{WQSym}, an analogue of $QSym $ in non-commutative variables introduced in \cite{novelli06} that is also called non-commutative quasisymmetric functions, or \textbf{NCQSym}, for instance in \cite{bergeron09}.
For a generalized permutahedron $\mathfrak{q}$ and non-commutative variables $\{ \mathbf{a}_i \}_{i \geq 1}$, let $\mathbf{a}_f = \mathbf{a}_{f(1)} \cdots \mathbf{a}_{f(n)} $ and define
$$\boldsymbol{\Upsilon }_{\mathbf{GP}} (\mathfrak{q}) = \sum_{f \text{ is } \mathfrak{q} \text{-generic} } \mathbf{a}_f \, .$$

We see from \cref{prop:psiisquasisym} that $\boldsymbol{\Upsilon }_{\mathbf{GP}} (\mathfrak{q})$ is a word quasisymmetric function.
Moreover, a straightforward computation shows that $\boldsymbol{\Upsilon }_{\mathbf{GP}}$ defines a Hopf algebra morphism between \textbf{GP} and \textbf{WQSym}.
Let us call $\Psi_{\mathbf{HGP}} $ and $\boldsymbol{\Upsilon }_{\mathbf{HGP}}$ the restrictions of $\Psi_{\mathbf{GP}} $ and $\boldsymbol{\Upsilon }_{\mathbf{GP}}$ to \textbf{HGP}, respectively.

Our next theorems describe the kernel of the maps $\Psi_{\mathbf{HGP}} $ and $\boldsymbol{\Upsilon }_{\mathbf{HGP}}$, using two types of relations:
\begin{itemize}
\item the \textit{simple relations}, which are presented in \cref{prop:nestosmplmodrel}, and convey that $\uhsm_{\gpHa}(\mathfrak{q} )$ only depends on which coefficients $a_I $ are positive;

\item the \textit{modular relations}, which are exhibited in \cref{thm:nestomodrel}.
We note for future reference that these generalize the ones for graphs: some of the modular relations on hypergraphic polytopes are the image of modular relations on graphs by a suitable embedding map $Z$, introduced below.
\end{itemize}

\begin{thm}[Kernel and image of of $\boldsymbol{\Upsilon }_{\mathbf{HGP}} : \mathbf{ HGP } \to \mathbf{WQSym} $]\label{thm:nestokernel}
The space $\ker \boldsymbol{\Upsilon }_{\mathbf{HGP}} $ is generated by the simple relations and the modular relations on hypergraphic polytopes.
The image of $\boldsymbol{\Upsilon }_{\mathbf{HGP}} $ is $\mathbf{SC}$, a proper subspace of \textbf{WQSym} introduced in \cref{def:SCspace} below.
\end{thm}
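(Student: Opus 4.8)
The plan is to identify the kernel of $\boldsymbol{\Upsilon}_{\mathbf{HGP}}$ by a dimension count, exactly as one does for the graph case in \cref{thm:graphkernel}. First I would set $R$ to be the subspace of $\mathbf{HGP}$ spanned by the simple relations and the modular relations on hypergraphic polytopes, and show $R \subseteq \ker \boldsymbol{\Upsilon}_{\mathbf{HGP}}$; the simple relations are in the kernel essentially by definition (the generic functions of $\mathfrak{q}$ depend only on the face lattice, hence only on which $a_J$ are positive, by \cref{prop:nestosmplmodrel}), and the modular relations are in the kernel by \cref{thm:nestomodrel}. It then suffices to produce a set of coset representatives of $\mathbf{HGP}/R$ that maps under $\boldsymbol{\Upsilon}_{\mathbf{HGP}}$ to a linearly independent family in $\mathbf{WQSym}$, because this forces $\ker \boldsymbol{\Upsilon}_{\mathbf{HGP}} = R$ and simultaneously pins down the image as the span $\mathbf{SC}$ of that family.

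The key reduction is that, modulo the simple relations, every hypergraphic polytope is congruent to a fundamental one $\mathcal F^{-1}(A)$, so $\mathbf{HGP}/\langle\text{simple}\rangle$ is spanned by the classes $[\mathcal F^{-1}(A)]$ as $A$ ranges over set families (building towards set families of a canonical ``nesting-closed'' or antichain-like form using \cref{prop:nestosmplmodrel}). Next, modulo the modular relations of \cref{thm:nestomodrel}, I would run a straightening/normal-form argument: each modular relation lets one rewrite a polytope with a ``removable'' configuration as a signed sum of strictly simpler ones, and iterating yields a distinguished spanning set $\mathcal B$ of the quotient $\mathbf{HGP}/R$ indexed by the families $A$ that are reduced with respect to both relation types. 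This is the combinatorial heart: one must choose the straightening order carefully and check confluence (that the normal form is well-defined), mirroring the graph argument where the modular relations reduce every graph to a forest-like representative.

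Finally I would evaluate $\boldsymbol{\Upsilon}_{\mathbf{HGP}}$ on $\mathcal B$ and prove linear independence of the images, which I would do by exhibiting, for each reduced family $A$, a monomial $\mathbf{a}_f$ (equivalently a set composition / surjection $f$) that occurs in $\boldsymbol{\Upsilon}_{\mathbf{HGP}}(\mathcal F^{-1}(A))$ but not in $\boldsymbol{\Upsilon}_{\mathbf{HGP}}(\mathcal F^{-1}(A'))$ for any other reduced $A'$ — e.g. a leading term with respect to a suitable order on set compositions coming from the vertices of $\mathfrak{q}$. Unravelling this leading-term correspondence also \emph{defines} $\mathbf{SC}$: it is the span of the word quasisymmetric functions $\boldsymbol{\Upsilon}_{\mathbf{HGP}}(\mathcal F^{-1}(A))$ over reduced $A$, which one then checks is a proper subspace of $\mathbf{WQSym}$ (some $\mathbf{M}$-monomials, indexed by set compositions that never arise as vertex cones of hypergraphic polytopes, are missing). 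The main obstacle I anticipate is the confluence of the straightening step — showing the two relation families together admit a consistent normal form and that no further relations are hidden — since unlike graphs, where edges are simple objects, here a single simplex $\mathfrak s_J$ can interact with many others, so the combinatorics of which configurations are ``removable'' is more delicate and will require a carefully chosen well-founded order (likely by $|I|$, then by $\sum_J |J|$ or by the number of maximal sets in $A$).
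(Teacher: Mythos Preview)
Your high-level strategy matches the paper's: reduce to fundamental hypergraphic polytopes via the simple relations, then find a distinguished family whose images under $\boldsymbol{\Upsilon}_{\mathbf{HGP}}$ are linearly independent and show that every other fundamental polytope can be rewritten in terms of these using a modular relation. But two concrete points in your execution diverge from what actually works.

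First, the distinguished family is not something ``nesting-closed'' or ``antichain-like''. The paper's representatives are the \emph{basic hypergraphic polytopes} $\mathfrak p^{\opi}=\mathcal F^{-1}\{A:\#A_{\opi}=1\}$, one for each class $[\opi]\in\oPi_n/\!\sim$. Their images satisfy the triangular expansion $\boldsymbol{\Upsilon}_{\mathbf{HGP}}(\mathfrak p^{\opi})=\sum_{\opi\preceq\otau}\mathbf M_{\otau}$ in the SC preorder, which gives linear independence and shows their span is exactly $\mathbf{SC}$ (this is \cref{lm:liness}). Without naming this family you cannot carry out the leading-term argument you sketch.

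Second, the confluence worry is a red herring, and your straightening direction is backwards. The paper orders fundamental polytopes by $\#\mathcal F(\mathfrak q)$ and, for a non-basic $\mathfrak q$, takes the modular relation with $\mathcal A=\mathcal F(\mathfrak q)$ and $\mathcal B=\mathcal F(\mathfrak q)^c$; this expresses $\mathfrak q$ as a signed sum of polytopes with \emph{strictly more} sets in $\mathcal F$. No rewriting system or confluence is needed: \cref{lm:kernandimdesc} only requires, for each non-distinguished basis element, a single relation writing it as something in $\mathcal B$ plus terms strictly larger in a fixed total order. The check that $\mathcal K\cup\mathcal J=\oPi_n$ for this choice of $\mathcal A,\mathcal B$ is exactly the statement that $\mathfrak q$ is not of the form $\mathfrak p^{\opi}$. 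So the ``delicate combinatorics'' you anticipate dissolves once you pick the right target family and the right monotone quantity.
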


Let us denote by $ \mathbf{WQSym}_n$ the linear space of homogeneous word quasisymmetric functions of degree $n$, and let $\mathbf{SC}_n = \mathbf{SC}\cap \mathbf{WQSym}_n$.
A monomial basis for \textbf{SC} is presented in \cref{def:SCspace}.
An asymptotic for the dimension of $\mathbf{SC}_n$ is computed in \cref{prop:growthdimim}, where in particular it is shown that it is exponentially smaller than the dimension of $ \mathbf{WQSym}_n$.

Two generalized permutahedra $\mathfrak{q}_1, \mathfrak{q}_2$ are isomorphic if $\mathfrak{q}_1$ can be obtained from $\mathfrak{q}_2$ by rearranging the coordinates of the points in $\mathfrak{q}_1$.
If $\mathfrak{q}_1, \mathfrak{q}_2$ are isomorphic, the chromatic quasisymmetric functions $\Psi_{\mathbf{GP}}(\mathfrak{q}_1 )$ and $ \Psi_{\mathbf{GP}}( \mathfrak{q}_2 )$ are the same.
We say that $\mathfrak{q}_1 - \mathfrak{q}_2 $ is an \textit{isomorphism relation on hypergraphic polytopes}.

\begin{thm}[Kernel and image of $\Psi_{\mathbf{HGP}} : \mathbf{ HGP } \to QSym $]\label{thm:nestocomukernel}
The linear space $\ker \Psi_{\mathbf{HGP}}$ is generated by the simple relations, the modular relations and the isomorphism relations.
The image of $\Psi_{\mathbf{HGP}} $ is $QSym$.
\end{thm}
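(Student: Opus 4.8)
The plan is to deduce \cref{thm:nestocomukernel} from the non-commutative version \cref{thm:nestokernel} by applying the canonical ``commutation'' projection. Recall that there is a Hopf algebra surjection $\pi \colon \mathbf{WQSym} \to QSym$ obtained by letting the variables $\mathbf{a}_i$ commute, and by construction $\Psi_{\mathbf{HGP}} = \pi \circ \boldsymbol{\Upsilon}_{\mathbf{HGP}}$, since setting the $\mathbf{a}_i$ to be commuting in $\sum_{f\ \mathfrak{q}\text{-generic}} \mathbf{a}_f$ yields exactly $\sum_{f\ \mathfrak{q}\text{-generic}} x_f$. Hence $\ker \boldsymbol{\Upsilon}_{\mathbf{HGP}} \subseteq \ker \Psi_{\mathbf{HGP}}$, and an element of $\mathbf{HGP}$ lies in $\ker \Psi_{\mathbf{HGP}}$ precisely when $\boldsymbol{\Upsilon}_{\mathbf{HGP}}$ sends it into $\ker \pi$. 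For the image statement, surjectivity onto $QSym$ is immediate: $\pi$ is surjective and already $\Psi_{\gHa}$ is surjective onto $Sym \subsetneq QSym$; more to the point, composing the embedding $Z$ with $\boldsymbol{\Upsilon}_{\mathbf{HGP}}$ recovers graph invariants, but for full surjectivity onto $QSym$ I would exhibit explicit hypergraphic polytopes (e.g.\ suitable simplices or fundamental hypergraphic polytopes indexed by compositions) whose chromatic quasisymmetric functions form a basis of $QSym_n$, mirroring what is done in the graph case in \cref{thm:graphcomukernel}. Alternatively one observes $\Psi_{\mathbf{HGP}}$ surjects because $\boldsymbol{\Upsilon}_{\mathbf{HGP}}$ surjects onto $\mathbf{SC}$ and $\pi(\mathbf{SC}) = QSym$, which follows once we check $\pi$ restricted to $\mathbf{SC}$ is still onto (the monomial basis of $\mathbf{SC}$ from \cref{def:SCspace} maps onto the monomial quasisymmetric functions).

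The core is the kernel computation. Let $K$ denote the span of the simple relations, the modular relations, and the isomorphism relations inside $\mathbf{HGP}$. The inclusion $K \subseteq \ker \Psi_{\mathbf{HGP}}$ is the easy direction: simple and modular relations already lie in $\ker \boldsymbol{\Upsilon}_{\mathbf{HGP}} \subseteq \ker \Psi_{\mathbf{HGP}}$ by \cref{thm:nestokernel}, and isomorphism relations lie in $\ker \Psi_{\mathbf{HGP}}$ because $\Psi_{\mathbf{GP}}$ is invariant under relabeling coordinates, as noted just before the statement. For the reverse inclusion, take $v \in \ker \Psi_{\mathbf{HGP}}$, so $\pi(\boldsymbol{\Upsilon}_{\mathbf{HGP}}(v)) = 0$. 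The strategy is: first use the modular and simple relations to rewrite $v$ modulo $K$ as a linear combination of a distinguished set of ``representatives'' — concretely, fundamental hypergraphic polytopes $\mathcal{F}^{-1}(A)$ with $A$ ranging over a transversal of some equivalence (using \cref{prop:nestosmplmodrel} for the simple relations and \cref{thm:nestomodrel} for the modular ones) — on which $\boldsymbol{\Upsilon}_{\mathbf{HGP}}$ restricts to an injection into $\mathbf{SC}$, which holds by \cref{thm:nestokernel}. Then, among these representatives, group them into isomorphism classes; within one isomorphism class all the $\Psi_{\mathbf{HGP}}$-values coincide, so modulo the isomorphism relations $v$ is equivalent to a combination of one representative per isomorphism class. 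Finally I must argue that the images under $\Psi_{\mathbf{HGP}}$ of one representative per isomorphism class are linearly independent in $QSym$; equivalently, that $\pi$ is injective on the span of $\boldsymbol{\Upsilon}_{\mathbf{HGP}}$ of these representatives. That reduces to the combinatorial fact that two such representatives have equal chromatic quasisymmetric function (commutative) if and only if they are isomorphic — the ``only if'' being what remains to prove.

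The main obstacle, as in every kernel-of-an-invariant theorem of this type, is exactly that last injectivity step: showing that once we have passed to the reduced representatives modulo simple and modular relations, the commutative invariant $\Psi_{\mathbf{HGP}}$ separates them up to isomorphism — i.e.\ the only linear dependencies among their values come from isomorphism. I expect to handle this the same way \cref{thm:graphcomukernel} is handled: note that $\boldsymbol{\Upsilon}_{\mathbf{HGP}}$ is injective on the reduced representatives (\cref{thm:nestokernel}), and that the symmetric group $\mathfrak{S}_n$ acts on both the representatives (by relabeling) and on $\mathbf{WQSym}_n$ compatibly with $\boldsymbol{\Upsilon}_{\mathbf{HGP}}$, with $QSym_n = (\mathbf{WQSym}_n)^{\text{something}}$ realized as the coinvariants under letting variables commute. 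Passing to $\mathfrak{S}$-orbits, the images of distinct orbits under the symmetrization $\pi \circ \boldsymbol{\Upsilon}_{\mathbf{HGP}}$ stay linearly independent because they involve disjoint ``support'' in a suitable monomial expansion inherited from the monomial basis of $\mathbf{SC}$ in \cref{def:SCspace}; concretely, one picks for each reduced representative a distinguished monomial in its $\mathbf{SC}$-expansion that survives symmetrization and is not hit by any non-isomorphic representative. Assembling these pieces gives $\ker \Psi_{\mathbf{HGP}} = K$, and combined with the surjectivity argument above completes the proof of \cref{thm:nestocomukernel}.
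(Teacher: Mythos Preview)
Your overall architecture matches the paper's: factor $\Psi_{\mathbf{HGP}} = \mathrm{comu}\circ \boldsymbol{\Upsilon}_{\mathbf{HGP}}$, use \cref{thm:nestokernel} to reduce modulo simple and modular relations to the basic hypergraphic polytopes $\mathfrak{p}^{[\opi]}$, collapse these along isomorphism, and then prove that one representative per isomorphism class gives linearly independent images in $QSym$. You also correctly identify this last injectivity step as the crux. Surjectivity is then automatic once that step is done, since the $\Psi_{\mathbf{HGP}}(\mathfrak{p}^{\opi})$ already span $QSym_n$.

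The gap is precisely in your handling of that crux. The claim that the images ``involve disjoint support in a suitable monomial expansion inherited from the monomial basis of $\mathbf{SC}$'' is false as stated: the monomial expansions of the $\Psi_{\mathbf{HGP}}(\mathfrak{p}^{\opi})$ overlap heavily, and there is no disjoint-support argument available. What the paper does instead is a triangularity argument, and the nontrivial content is setting up the right partial order on compositions. One defines $\alpha \leq' \beta$ in $\mathcal{C}_n$ to mean there exist $\opi,\otau$ with $\alpha(\opi)=\alpha$, $\alpha(\otau)=\beta$ and $\opi\preceq\otau$ in the SC preorder; one must then prove (\cref{lm:ordercomps}) that $\leq'$ is actually a partial order --- antisymmetry is not obvious and uses \cref{prop:combinterpretation}. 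Once that is in place, writing $R_{\alpha(\opi)} := \Psi_{\mathbf{HGP}}(\mathfrak{p}^{\opi})$, equation \eqref{eq:fundnestocase} yields
\[
R_{\alpha(\opi)} \;=\; A_{\opi,\alpha(\opi)}\,M_{\alpha(\opi)} \;+\; \sum_{\alpha(\opi) <' \beta} A_{\opi,\beta}\, M_{\beta},
\]
with $A_{\opi,\alpha(\opi)}>0$, so $\{R_\alpha\}_{\alpha\in\mathcal{C}_n}$ is linearly independent by upper-triangularity. This simultaneously gives surjectivity onto $QSym_n$ and, via \cref{lm:kernofcomp}, the kernel statement. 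Your sketch does not supply this order, the verification that it is an order, or the triangularity; without it the linear-independence step is unproved.
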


%We note that for some Hopf algebras $\mathbf{h}$, Aguiar, Bergeron and Sottile in \cite{aguiar06} define a chromatic quasisymmetric function $\Psi_{\mathbf{h}}: \mathbf{h} \to QSym $, see details below.\todo{below?}
%The above functions $\Psi_{\mathbf{G}}$ and $\Psi_{\mathbf{GP}}$ are seen as a particular case of this general construction, as the notation suggests.

In \cite{aguiar17}, Aguiar and Ardila define the graph zonotope, a Hopf algebra embeddimg $Z:\mathbf{G} \to \mathbf{GP}$ discussed above.
Remarkably, we have that $\Psi_{\mathbf{G}} \circ Z = \Psi_{\mathbf{GP}}$.
They also define other polytopal embeddings from other combinatorial Hopf algebras $\mathbf{h}$, like matroids, to \textbf{GP}.
One associates a universal morphism $\Psi_{\mathbf{h}}$ to these Hopf algebras that also satisfy $\Psi_{\mathbf{GP}} \circ Z = \Psi_{\mathbf{h}}$.
These universal morphisms are discussed below.

In particular, we can see that $Z ( \ker \Psi_{\mathbf{h}} ) = \ker\Psi_{\mathbf{GP}} \cap Z(\mathbf{h}) $.
This relation between $\ker \Psi_{\mathbf{h}}$ and $\ker \Psi_{\mathbf{GP}}$ is the main motivation to describe $\ker \Psi_{\mathbf{GP}}$, and indicates that $\ker \Psi_{\mathbf{GP}}$ is the kernel problem that deserves most attention.
In this paper, we leave the description of $\ker \Psi_{\mathbf{GP}} $ as an open problem.

Most of the combinatorial objects embedded in \textbf{GP} are also embedded in \textbf{HGP}, such as graphs and matroids, so a description of $\ker \Psi_{\mathbf{GP}}$ is already interesting.

We remark that a description of the generators of $\Psi_{\mathbf{GP}}$ or $\ker \Psi_{\mathbf{HGP}}$ does not entail a description of the generators of a generic $\ker \Psi_{\mathbf{h}}$.
For that reason, the kernel problem on matroids and on simplicial complexes is still open, despite these Hopf algebras being realized as Hopf subalgebras of $\mathbf{HGP}$.

On \cref{sec:facesgp}, the computation of the image in \cref{thm:nestokernel} is extended to the Hopf algebra of generalized permutahedra.
Specifically, there it is seen that the image of $\boldsymbol{\Upsilon}_{\mathbf{GP}}$ is also $\mathbf{SC}$.

\subsection*{Universal morphisms}

%

%The maps $\boldsymbol{\Upsilon }_{\mathbf{G}}$ and $\boldsymbol{\Upsilon }_{\mathbf{GP}}$ arise from more general constructions in Hopf algebras and Hopf monoids, respectively.

For a Hopf algebra $\mathbf{h}$, a \textit{character} $\eta $ of $\mathbf{h}$ is a linear map $\eta : \mathbf{h} \to \mathbb{K}$ that preserves the multiplicative structure and the unit of $\mathbf{h}$.
We define a \textit{combinatorial Hopf algebra} as a pair $(\mathbf{h}, \eta )$ where $\mathbf{h}$ is a Hopf algebra and $\eta : \mathbf{h} \to \mathbb{K}$ a character of $\mathbf{h}$.
For instance, consider the ring of quasisymmetric functions $QSym $ introduced in \cite{gessel84} with its monomial basis $\{M_{\alpha } \}$, indexed by compositions. 
Then, $QSym$ has a combinatorial Hopf algebra structure $(QSym, \eta_0 )$, by setting $\eta_0 ( M_{\alpha}) = 1$ whenever $\alpha $ has one or zero parts.

In \cite{aguiar06}, Aguiar, Bergeron, and Sottile showed that any combinatorial Hopf algebra $( \mathbf{h}, \eta )$ has a unique combinatorial Hopf algebra morphism $\Psi_{\mathbf{h}} : \mathbf{h} \to QSym $, i.e. a Hopf algebra morphism that satisfies $\eta_0\circ \Psi_{\mathbf{h}}  = \eta $.
In other words, $(QSym, \eta_0 )$ is a terminal object in the category of combinatorial Hopf algebras.
The construction of $\Psi_{\mathbf{h}}$ is given in \cite{aguiar06} and also presented below in \cref{ch2}.
We will refer to these maps as the universal maps to $QSym$.

The commutative invariants previously shown on graphs $\Psi_{\gHa } $, on posets $\Psi_{ \mathbf{Pos }} $ and on generalized permutahedra $\Psi_{\gpHa}$ can be obtained as universal maps to $QSym$.
If we take the character $\eta(G) = \mathbb{1}[G \text{ has no edges} ]$ on the graphs Hopf algebra, the unique combinatorial Hopf algebra morphism $\mathbf{G}\to QSym $ is exactly the map $\Psi_{\gHa} $.
With the Hopf algebra structure imposed on $\mathbf{GP} $ in \cite{aguiar17}, if we consider the character $\eta (\mathfrak{q}) = \mathbb{1}[\mathfrak{q} \text{ is a point} ]$, then $\Psi_{\mathbf{GP}}$ is the universal map from $\mathbf{GP}$ to $QSym$.
On posets, the Hopf algebra structure considered is the one presented in \cite{grinberg10} and the character that is considered is $\eta(P) = \mathbb{1}[ P \text{ is an antichain}]$.

To see the maps $\uhsm_{\gHa}:\gHa \to \mathbf{WSym} $ and $\uhsm_{\gpHa}:\gpHa \to \mathbf{WQSym} $ as universal maps, we need a parallel of the universal property of $QSym$ in the non-commutative world.
The fitting property is better described in the context of Hopf monoids in vector species.
%, we will see the parallel result in Hopf monoids in vector species.
Consider the Hopf monoid $\mathbf{\overline{WQSym } } $, which is presented in \cite{aguiar10} as the Hopf monoid of faces.
It is seen that there is a unique Hopf monoid morphism $\boldsymbol{ \Upsilon }_{\hs{h}}$ between a connected Hopf monoid $\hs{h} $ and $\hs{WQSym}$.
In the last chapter we establish another proof of this fact, using resources from character theory, and expand on that showing that instead of a connected Hopf monoid we can take any combinatorial Hopf monoid, for a suitable notion of combinatoric Hopf monoid.

The relationship between Hopf algebras and Hopf monoids is very well captured with the so called Fock functors, mapping Hopf monoids to Hopf algbras, and Hopf monoid morphisms to Hopf algebra morphism.
In particular, the full Fock functor $\mathcal{K}$ satisfies $\mathcal{K}(\mathbf{\overline{WQSym } })  =WQSym$.
Then, the universal property of $\mathbf{\overline{WQSym }}$ gives us a Hopf algebra morphism $\mathcal{K}(\uhsm_{\overline{h} } ) $ from $\mathcal{K}(\overline{h}) $ to $\mathbf{WQSym }$.
The maps $\boldsymbol{ \Upsilon}_{\gHa}, \boldsymbol{ \Upsilon}_{\gpHa} $ arise precisely in this way,  when applying $\mathcal{K}$ to the unique combinatorial Hopf monoid morphism from the Hopf monoid on graphs $\hs{G}$ and of generalized permutahedra $\hs{GP}$ to $\hs{WQSym} $.
In particular, we observe that $\mathcal{K}(\hs{G}) = \gHa$ and $\mathcal{K}(\hs{GP})= \gpHa$.
%, and the Gessel quasisymmetric function $\Psi_{\mathbf{Pos}}$ on posets arises similarly.
If we consider the poset Hopf monoid $\hs{Pos} $, the universal property of the combinatorial Hopf monoid $\hs{WQSym}$ gives us a non-commutative analogue $\uhsm_{\mathbf{Pos}}$ of the Gessel invariant, which coincides with the one presented in \cite{feray15}.
In particual, $\mathcal{K}(\hs{Pos}) = \mathbf{Pos}$.
We will refer to these Hopf algebra morphisms as the universal maps to $\mathbf{WQSym}$.

Finally, our previous results have an interesting consequence.
We show that, because $\uhsm_{\mathbf{HGP}}$ is not surjective, there is no combinatorial Hopf monoid morphism from the Hopf monoid on posets to the Hopf monoid on hypergraphic polytopes.
However, in \cite{aguiar17} a Hopf monoid morphism from posets to extended generalized permutahedra is constructed.
With this result we obtain that this map cannot be restricted from extended generalized permutahedra to generalized permutahedra.

\bigskip

Note: for sake of clarity, we have been using boldface for non-commutative Hopf algebras, their elements, and the associated combinatorial objects, like word symmetric functions and set compositions.
We try and maintain that notational convention throughout the paper.

This paper is organized as follows: In \cref{ch2} we address the preliminaries, where the reader can find the linear algebra tools that we use, the introduction to the main Hopf algebras of interest, and the proof that the several definitions of a generalized permutahedra are equivalent.
In \cref{sec:3} we prove \cref{thm:graphkernel,thm:graphcomukernel}, and we study the augmented chromatic invariant.
In \cref{ch4} we prove \cref{thm:nestokernel,thm:nestocomukernel}, and we present asymptotics for the dimension of the graded Hopf algebra \textbf{SC}.
In \cref{ch:HM} we present the universal property of $\hs{WQSym}$.
In \cref{app:coef} we find some relations between the coefficients of the augmented chromatic symmetric function and the coefficients of the original chromatic symmetric function on graphs.

%\acknowledgements{}

%% if you use biblatex then this generates the bibliography
%% if you use some other method then remove this and do it your own way
%\printbibliography

\section{Preliminaries}\label{ch2}

There are natural maps $\mathbf{WSym} \to Sym $ and $\mathbf{WQSym} \to QSym $ by allowing the variables to commute.
We denote these maps by $comu $.

For an equivalence relation $\sim $ on a set $A$, we write $[ x]_{\sim}$ for the equivalence class of $x$ in $\sim $, and write $[x ]$ when $\sim $ is clear from context.
We write both $\mathcal{E}(\sim )$ and $A  / \sim $ for the set of equivalence classes of $\sim $.
All the vector spaces and algebras are over a generic field $\mathbb{K}$ of characteristic zero.

\subsection{Linear algebra preliminaries}

The following linear algebra lemmas will be useful to compute generators of the kernels and the images of $ \Psi$ and $ \boldsymbol{\Upsilon } $.
These lemmas describe a sufficient condition for a set $\mathcal{B}$ to span the kernel of a linear map $\phi:V\to W$.
%The proofs of these lemmas are basic linear algebra and can be found in \cite{penaguiao17}.

\begin{lm}\label{lm:kernandimdesc}
Let $V$ be a finite dimensional vector space with basis $\{ a_i | i \in [m] \}$, $\phi: V \to W$ be a linear map, and $\mathcal{B} = \{b_j | j \in J \} \subseteq \ker \phi $ be a family of relations.

Assume that there exists $I \subseteq [m]$ such that:

\begin{itemize}
    \item the family $\{ \phi (a_i ) \}_{ i \in I }$ is linearly independent in $W $,
    
    \item for $i \in [m] \setminus I $ we have $a_i = b + \sum_{k=i+1}^m \lambda_{k, i} a_k $ for some $b \in \mathcal{B} $ and some scalars $\lambda_{k, i}$;
\end{itemize}

Then $\mathcal{B} $ spans $\ker \phi $.
Additionally, we have that $\{ \phi (a_i ) \}_{ i \in I }$ is a basis of the image of $\phi $.
\end{lm}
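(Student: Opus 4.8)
The plan is to use the ordering on $[m]$ to rewrite every basis vector $a_i$ with $i \notin I$ in terms of the vectors indexed by $I$ modulo the span of $\mathcal{B}$, and then use the linear independence of $\{\phi(a_i)\}_{i \in I}$ to pin down the kernel exactly. Concretely, first I would prove the following claim by downward induction on $i$: for every $i \in [m]$, we have $a_i \in \spn\{a_k \mid k \in I, k \geq i\} + \spn \mathcal{B}$. For the base case, the largest index is either in $I$ (trivial) or, by the second hypothesis, equals $b + \sum_{k > m} \lambda_{k,i} a_k = b \in \spn\mathcal{B}$ since the sum is empty. For the inductive step with $i \notin I$, write $a_i = b + \sum_{k=i+1}^m \lambda_{k,i} a_k$; each $a_k$ with $k > i$ lies in $\spn\{a_\ell \mid \ell \in I, \ell \geq k\} + \spn\mathcal{B} \subseteq \spn\{a_\ell \mid \ell \in I, \ell \geq i\} + \spn\mathcal{B}$ by the induction hypothesis, and $b \in \spn\mathcal{B}$, so $a_i$ lies in the claimed subspace. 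Since the $a_i$ form a basis of $V$, summing over all $i$ gives $V = \spn\{a_i \mid i \in I\} + \spn\mathcal{B}$.

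Next I would observe that this already shows $V = \spn\{a_i \mid i \in I\} + \ker\phi$, hence $\phi(V) = \spn\{\phi(a_i) \mid i \in I\}$; combined with the assumed linear independence of $\{\phi(a_i)\}_{i \in I}$, this proves that family is a basis of $\im\phi$, giving the second assertion. For the kernel statement, take any $v \in \ker\phi$. Using the decomposition $V = \spn\{a_i \mid i \in I\} + \spn\mathcal{B}$, write $v = \sum_{i \in I} \mu_i a_i + w$ with $w \in \spn\mathcal{B}$. Since $\mathcal{B} \subseteq \ker\phi$ we have $w \in \ker\phi$, so $0 = \phi(v) = \sum_{i \in I}\mu_i \phi(a_i)$. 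Linear independence of $\{\phi(a_i)\}_{i \in I}$ forces all $\mu_i = 0$, so $v = w \in \spn\mathcal{B}$. This proves $\ker\phi \subseteq \spn\mathcal{B}$, and the reverse inclusion is the hypothesis $\mathcal{B} \subseteq \ker\phi$, so $\spn\mathcal{B} = \ker\phi$.

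I do not expect any serious obstacle here: the statement is essentially a bookkeeping argument, and the only point requiring care is making the induction run in the correct direction. The second hypothesis expresses $a_i$ in terms of strictly larger indices, so the recursion terminates at the top index $m$ — hence the induction must be \emph{downward} on $i$, and one must check that the empty tail sum in the base case indeed forces $a_m \in \spn\mathcal{B}$ when $m \notin I$. It is also worth noting explicitly that finite-dimensionality of $V$ is what guarantees the process terminates and that $\spn\mathcal{B}$, though $\mathcal{B}$ may be infinite, is a well-defined subspace contained in $\ker\phi$; no further finiteness of $\mathcal{B}$ or $J$ is needed.
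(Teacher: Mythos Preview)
Your argument is correct. Both your proof and the paper's exploit the same triangular structure coming from the second hypothesis, but they package it differently. The paper argues by contradiction: assuming some $c \in \ker\phi \setminus \spn\mathcal{B}$ exists, it picks one whose smallest non-$I$ index $i_c$ with nonzero coefficient is maximal, then uses the relation $a_{i_c} = b' + \sum_{k>i_c}\lambda_{k,i_c}a_k$ to produce $c - \tau_{i_c}b' \in \ker\phi \setminus \spn\mathcal{B}$ with strictly larger such index, a contradiction. You instead run a clean downward induction to establish the decomposition $V = \spn\{a_i \mid i \in I\} + \spn\mathcal{B}$ directly, and then read off both conclusions at once. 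Your route is arguably tidier: the decomposition immediately yields the image statement without a separate inductive argument (the paper proves spanning of $\im\phi$ by a second induction), and the kernel statement follows by a one-line linear-independence check rather than an extremal argument. The two proofs are equivalent in content; yours just isolates the key structural fact $V = \spn\{a_i\}_{i\in I} + \spn\mathcal{B}$ explicitly.
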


%Note that a set $\{ \phi (a_i ) | i \in \overline{I} \} $ is linearly independent only if its elements are all distinct.
%Note that we require for any $i \in I \setminus\overline{I} $ to write $a_i $ as indices of higher order, so we have a upper triangular matrix in certain rows.

The following lemma will help us dealing with the composition $\Psi= \comu \circ \boldsymbol{\Upsilon } $: we give a sufficient condition for a natural enlargement of the set $\mathcal{B} $ to generate $\ker \Psi $, given that $\mathcal{B}$ already generates $\ker \boldsymbol{\Upsilon }$.
% of the compositions $\comu \circ \Psi_{\mathbf{G}} $ and $\comu \circ \Psi_{\mathbf{GP}}$.

\begin{lm}\label{lm:kernofcomp}
We will use the same notation as in \cref{lm:kernandimdesc}.
%Let $V$ be a finite dimensional vector space with a basis $\{ a_i | i \in I \}$ indexed by $I = [m]$, $\phi: V \to W$ and 
Additionally, consider $\phi_1: W \to W' $ linear map and write $\phi' = \phi_1 \circ \phi $.
Take the equivalence relation $\sim$ in $\{ a_i\}_{ i \in [m]} $ that satisfies $a_i \sim a_j  $ whenever $ \phi'(a_i ) = \phi'(a_j )$.
Let $\mathcal{C} = \{ a_i- a_j | \, a_i \sim a_j \} $ and write $\phi' ([a_i] ) = \phi' (a_i )$ with no ambiguity.

\begin{equation}\label{cd:kerncomudiag}
\begin{tikzcd}
\mathcal{B}
\arrow[r, hook]
&
V
\arrow[r, "{\phi}"]
\arrow[rd, "{\phi'}"]
&
W
\arrow[d, "{\phi_1}"]
\\
\mathcal{C}
\arrow[ru, hook]
&
&
W'
\end{tikzcd}
\end{equation}

%Suppose that $\mathcal{B} $ generates $\ker \phi $, that $\{\phi (a_i ) | i \in \overline{I} \} $ is a basis for $\im \phi $ and that 
Assume the hypotheses in \cref{lm:kernandimdesc} and, additionally, suppose that the family $\{ \phi' ([a_i] ) \}_{ [a_i] \in \mathcal{E}( \sim ) } $ is linearly independent in $W'$.

Then, $\ker \phi' $ is generated by $\mathcal{B} \cup \mathcal{C}$.
Furthermore, $\{ \phi' ([a_i] ) \}_{ [a_i] \in \mathcal{E}( \sim ) } $ is a basis of $\im \phi ' $.
\end{lm}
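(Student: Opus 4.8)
The plan is to bootstrap from Lemma~\ref{lm:kernandimdesc} rather than redo the linear algebra from scratch. First I would record what Lemma~\ref{lm:kernandimdesc} already gives us: $\mathcal{B}$ spans $\ker\phi$, and $\{\phi(a_i)\}_{i\in I}$ is a basis of $\im\phi$. Next I would observe that the index set $I$ refines the $\sim$-classes well: since $\phi'=\phi_1\circ\phi$, if $\phi(a_i)=\phi(a_j)$ then $\phi'(a_i)=\phi'(a_j)$, so $\sim$ is coarser than the relation ``$\phi(a_i)=\phi(a_j)$''; in particular each $\sim$-class is a union of fibers of $i\mapsto\phi(a_i)$ on $[m]$. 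I would then pick, for each class $[a_i]\in\mathcal{E}(\sim)$, a representative index in $I$ if the class meets $I$, giving a subset $I'\subseteq I$ with $|I'|=|\mathcal{E}(\sim)|$ once we check every class meets $I$ — and that check is exactly where the extra hypothesis enters: if some class avoided $I$ entirely, then all its elements $a_i$ would, via the second bullet of Lemma~\ref{lm:kernandimdesc}, be expressible through $\mathcal{B}$ and later basis vectors, forcing a dependence among the $\phi(a_i)$, $i\in I$, after applying $\phi_1$; more cleanly, linear independence of $\{\phi'([a_i])\}_{[a_i]\in\mathcal{E}(\sim)}$ in $W'$ forces these $|\mathcal{E}(\sim)|$ vectors to be the images under $\phi_1$ of $|\mathcal{E}(\sim)|$ linearly independent vectors $\phi(a_i)$, $i\in I'$, so indeed $|I'|=|\mathcal{E}(\sim)|$ and $\{\phi(a_i)\}_{i\in I'}$ is independent. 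This simultaneously shows $\{\phi'([a_i])\}$ is a basis of $\im\phi'$, since $\phi'([a_i])=\phi_1(\phi(a_i))$ and these span $\phi_1(\im\phi)=\im\phi'$.

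For the kernel statement, I would argue by dimension counting. We have the exact-sequence identity $\dim\ker\phi' = \dim V - \dim\im\phi' = \dim V - |\mathcal{E}(\sim)|$. On the other hand $\mathcal{B}\cup\mathcal{C}\subseteq\ker\phi'$: the inclusion $\mathcal{B}\subseteq\ker\phi\subseteq\ker\phi'$ is immediate, and $a_i-a_j\in\ker\phi'$ whenever $a_i\sim a_j$ by definition of $\sim$. So it suffices to show $\mathcal{B}\cup\mathcal{C}$ spans a subspace of dimension at least $\dim V - |\mathcal{E}(\sim)|$. To see this, start from $\spn\mathcal{B}=\ker\phi$, which has dimension $\dim V - |I| = \dim V - \dim\im\phi$, and add the elements $a_i - a_{j}$ where, for each $i\in I\setminus I'$, $j\in I'$ is the chosen representative of $[a_i]$. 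Each such $a_i-a_j$ lies in $\mathcal{C}$, and modulo $\ker\phi=\spn\mathcal{B}$ these vectors have images $\phi(a_i)-\phi(a_j)$, $i\in I\setminus I'$, which together with $\phi(a_j)$, $j\in I'$, still span $\im\phi$ and are in fact a basis of it (a triangular change of basis of $\{\phi(a_i)\}_{i\in I}$); hence the $|I\setminus I'| = |I| - |\mathcal{E}(\sim)|$ vectors $\phi(a_i)-\phi(a_j)$ are linearly independent in $\im\phi$, so the corresponding $a_i-a_j$ are linearly independent modulo $\ker\phi$. Therefore $\dim\spn(\mathcal{B}\cup\mathcal{C}) \geq \dim\ker\phi + (|I|-|\mathcal{E}(\sim)|) = \dim V - |\mathcal{E}(\sim)| = \dim\ker\phi'$, and combined with $\mathcal{B}\cup\mathcal{C}\subseteq\ker\phi'$ this forces equality, so $\mathcal{B}\cup\mathcal{C}$ spans $\ker\phi'$.

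The step I expect to be the main obstacle is the bookkeeping around the representatives: verifying that every $\sim$-class contains an index from $I$, and that the ``modulo $\spn\mathcal{B}$'' reduction is legitimate — i.e. that the images of the chosen $a_i - a_j$ in the quotient $V/\ker\phi$ really are as described and really are independent. The second bullet of Lemma~\ref{lm:kernandimdesc} (the triangular expressions $a_i = b + \sum_{k>i}\lambda_{k,i}a_k$) is what guarantees $\spn\mathcal{B}$ is large enough to make the quotient computation clean, but one has to be slightly careful that the combination of ``$\{\phi(a_i)\}_{i\in I}$ independent'' with ``$\{\phi'([a_i])\}_{[a_i]\in\mathcal{E}(\sim)}$ independent'' does pin down $|I'| = |\mathcal{E}(\sim)|$; everything else is then routine rank counting. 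I would present the argument in the quotient $V/\ker\phi$ to keep the indexing transparent.
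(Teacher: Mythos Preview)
Your argument is correct, but the paper takes a shorter route: rather than a fresh dimension count, it simply re-applies Lemma~\ref{lm:kernandimdesc} with new data. Concretely, the paper sets $I' = \{\max(A\cap I) : A \in \mathcal{E}(\sim)\}$ and checks that the pair $(I',\, \mathcal{B}\cup\mathcal{C})$ satisfies the two bullets of Lemma~\ref{lm:kernandimdesc} for the map $\phi'$: linear independence of $\{\phi'(a_i)\}_{i\in I'}$ is immediate from the hypothesis, and for $i\in [m]\setminus I'$ one either has $i\notin I$ (use the original $b\in\mathcal{B}$) or $i\in I\setminus I'$ (write $a_i = (a_i-a_j)+a_j$ with $j=\max([a_i]\cap I)>i$, so $a_i-a_j\in\mathcal{C}$). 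Invoking Lemma~\ref{lm:kernandimdesc} then gives both conclusions at once. Your rank--nullity approach is perfectly valid and arguably more self-contained, but it requires the extra bookkeeping you flag (representatives, independence of the $\phi(a_i)-\phi(a_{j(i)})$ modulo $\ker\phi$), whereas the paper's approach absorbs all of that into the already-proved lemma. In particular, the issue you single out as the main obstacle --- that every $\sim$-class meets $I$ --- is not something the paper needs to verify in advance: once Lemma~\ref{lm:kernandimdesc} yields that $\{\phi'(a_i)\}_{i\in I'}$ is a basis of $\im\phi'$, comparison with the linearly independent set $\{\phi'([a_i])\}_{[a_i]\in\mathcal{E}(\sim)}\subseteq\im\phi'$ forces $|I'|=|\mathcal{E}(\sim)|$ after the fact.
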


\begin{proof}[Proof of \cref{lm:kernandimdesc}]
Suppose, for sake of contradiction, that there is some element $c \in \ker \phi \setminus \spn\mathcal{B} $.
In particular $c \neq 0$.
Write
\begin{equation}\label{eq:binabasis}
c = \sum_{k=1 }^m \tau_k a_k \, ,
\end{equation}
and note that if $\tau_i = 0$ for every $i \not \in I$, then
$$ \, 0 = \phi(c) = \phi \left(  \sum_{k \in I } \tau_k a_k \right) = \sum_{k \in I } \tau_k \phi ( a_k) , $$
which, by linear independence of $\{ \phi ( a_k)\}_{k \in I }$, implies that $\tau_k = 0$ for every $k \in I$, contradicting $c \neq 0$.
Therefore, we have $\tau_i \neq 0 $ for $i \not\in I$ whenever $c\in \ker \phi \setminus \spn\mathcal{B}$.

Consider the smallest index $i_c \in [m]\setminus I $  such that $\tau_i $ is non-zero.
%, i.e. for $j \not\in I $ we have that $\tau_j \neq 0 \Rightarrow j \geq i $.
Consider $c \in \ker \phi \setminus \langle\mathcal{B} \rangle $ that maximizes $i_c$.

Thus, we can write
\begin{equation}\label{eq:spanofb}
c = \sum_{j\in I } \tau_j a_j +  \sum_{\substack{j\in [m] \setminus I \\ j \geq i_c }} \tau_j a_j \, .
\end{equation}

By hypotheses, because $i_c\not\in I$, there is some $b' \in \mathcal{B}$ such that:
$$ a_{i_c} = b' + \sum_{j=i_c+1}^m \lambda_{j, i_c} a_j  \, .$$

So applying this to \eqref{eq:spanofb} gives us:
\begin{equation}\begin{split}
c - \tau_{i_c} b' &= \sum_{j\in I } \tau_j a_j +  \sum_{\substack{j\in [m] \setminus I \\ j \geq i_c }} \tau_j a_j - \tau_{i_c} a_{i_c} + \sum_{k= i_c+ 1}^m \tau_{i_c} \lambda_{k, i_c} a_k \\
                  &= \sum_{j\in I } \tau_j a_j +  \sum_{\substack{j\in [m] \setminus I \\ j > i_c }} \tau_j a_j + \sum_{j= i_c+ 1}^m \tau_{i_c} \lambda_{j, i_c} a_j \, .
\end{split}
\end{equation}
Note that $c - \tau_{i_c} b_j \in \ker \phi \setminus \spn \mathcal{B} $ which contradicts the maximality of $i_c$.
From this we conclude that there are no elements $c$ in $ \ker \phi \setminus \spn \mathcal{B} $.

To show that the family $\{\phi(a_i) \}_{i\in I} $ is a basis of $\im \phi$, we just need to establish that this is a generating set.
Naturally, $\{\phi(a_i)\}_{i \in I} \cup \{\phi(a_i)\}_{i\in [m]}$ is a generating set because it is the image of a basis of $V$.
We show by induction that $\{\phi(a_i)\}_{i \in I} \cup \{\phi(a_i)\}_{i\in [m]\setminus [k]}$ is a generating set for any non-negative $k\leq m+1$.
This concludes the proof, since the original claim is this for $k=m+1$.

Indeed, if $\{\phi(a_i)\}_{i \in I} \cup \{\phi(a_i)\}_{i\in [m]\setminus [k]}$ is a generating set of $\im \phi$, then we note $a_k = b + \sum_{j=k+1}^m \lambda_{j, k} a_j $ for some $b \in \mathcal{B} $, so 
$$\phi(a_k) \in \spn \{\phi(a_i)\}_{i \in I} \cup \{\phi(a_i)\}_{i\in [m]\setminus [k+1]}\, . $$
This concludes the induction step.
\end{proof}

\begin{proof}[Proof of \cref{lm:kernofcomp}]
Define $I' = \{ \max A \cap  I | \, A \in \mathcal{E}(\sim ) \} $.
Note that for every $j \in I \setminus I' $ there is $c \in \mathcal{C}$ such that $a_j = c + \sum_{k=j+1}^m \lambda_{k, j} a_k$.
Indeed it is enough to choose $i \sim j $ with $i \in I' $, to write $a_j = \underbrace{a_j - a_i}_{\in \mathcal{C} } + a_i $.

So, the set $I' \subseteq [m] $ satisfies both that:

\begin{itemize}
    \item We have by hypothesis that $\{ \phi' (a_i ) \}_{ i \in I' } = \{ \phi' ([a_i] ) \}_{ i \in I'}$ is linearly independent in $W'$;
    
    \item For $i \in [m] \setminus I' $ we can write $a_i = b + \sum_{k=j+1}^m \lambda_{k, i} a_k $ for some $b \in \mathcal{B} \cup \mathcal{C} $ and some scalars $\lambda_{k, i}$.
\end{itemize}

Now applying \cref{lm:kernandimdesc} to $I'$ instead of $I$, to $\phi' $ instead of $\phi $ and to $\mathcal{B}\cup \mathcal{C} $ instead of $\mathcal{B} $ tells us that $\mathcal{B}\cup \mathcal{C} $ generates $\ker \phi' $, and that $\{ \phi' ([a_i] ) | i \in I' \} = \{ \phi' (a_i ) | i \in I \}  $ spans the image of $\phi'$, as desired.
\end{proof}

\subsection{Hopf algebras and associated combinatorial objects}
\label{sec:halg}

In the following, all the Hopf algebras $\mathbf{H}$ have a grading, denoted by $\mathbf{H}= \oplus_{n \geq 0 } \mathbf{H}_n $.

An \textit{integer composition}, or simply a composition, of $n$, is a list $\alpha = (\alpha_1, \cdots , \alpha_k)$ of positive integers whose sum is $n$.
We write $\alpha \models n$.
We denote the length of the list by $l(\alpha )$ and we denote the set of compositions of size $n$ by $\mathcal{C}_n$.

An \textit{integer partition}, or simply a partition, of $n$, is a non-increasing list of positive integers $\lambda = ( \lambda_1, \cdots , \lambda_k )$ whose sum is $n$.
We write $\lambda \vdash n$.
We denote the length of the list by $l(\lambda ) $ and we denote the set of partitions of size $n$ by $\mathcal{P}_n$.
By disregarding the order of the parts on a composition $\alpha $ we obtain a partition $\lambda ( \alpha ) $.

A \textit{set partition} $\parpi = \{ \parpi_1, \cdots, \parpi_k\}$ of a set $I$ is a collection of non-empty disjoint subsets of $I$, called \textit{blocks}, that cover $I$.
We write $\parpi \vdash I$.
We denote the number of parts of the set partition by $l(\parpi ) $, and call it its length.
We denote the family of set partitions of $I$ by $\mathbf{P}_I$, or simply by $\mathbf{P}_n$ if $I = [n] $.
By counting the elements on each block of $\parpi$, we obtain an integer partition denoted by $\lambda(\parpi ) \vdash \# I $.
We identify a set partition $\parpi\in \mathbf{P}_I$ with an equivalence relation $\sim_{\parpi}$ on $I$, where $x \sim_{\parpi} y $ if $x, y\in I$ are on the same block of $\parpi$.

A \textit{set composition} $\opi = S_1| \cdots | S_l $ of $I$ is a list of non-empty disjoint subsets of $I$ that cover $I$, which we call \textit{blocks}.
We write $\opi  \models I$.
We denote the size of the set composition by $l(\opi ) $.
We write $\oPi_I$ for the family of set compositions of $I$, or simply $\oPi_n$ if $I = [n] $.
By disregarding the order of a set composition $\opi$, we obtain a set partition $\makepar (\opi) \vdash I$.
By counting the elements on each block of $\opi$, we obtain a composition denoted by $\alpha (\opi ) \models \# I $.
A set composition is naturally identified with a total preorder $R_{\opi }$ on $I$, where $x R_{\opi } y$ if $x\in S_i, y\in S_j$ for $i \leq j$.

Permutations act on set compositions and set partitions: for a set composition $\opi = ( S_1, \cdots ,  S_k )$, a set partition $\pi = \{ \pi^{(1)}, \cdots , \pi^{(k)} \}$ on $I$, and a permutation $\phi: I \to I$, we define the set composition $\phi( \opi ) =  ( \phi(S_1), \cdots ,  \phi(S_k) )$ and the set partition $\phi(\parpi ) = \{ \phi(\parpi^{(1)}), \cdots , \phi(\parpi^{(k)}) \} $.

A \textit{coloring} of the set $I$ is a function $f : I \to \mathbb{N} $.
The set composition type $\opi(f)$ of a coloring $f: I \to \mathbb{N} $ is the set composition obtained after deleting the empty sets of $f^{-1}(1) | f^{-1}(2) |  \cdots $.
This notation is extended to function $f:I \to \mathbb{R}$.

In partitions and in set partitions, we use the classical \textit{coarsening orders} $\leq $ with the same notation, where we say that $\pi \leq \tau $ (resp. $\parpi \leq \partau $) if $\tau $ is obtained from $\pi$ by adding some parts of the original parts together (resp. if $\partau $ is obtained from $\parpi $ by merging some blocks).
%Counterparts for compositions and set compositions also exist but will not be needed in the following.

These objects relate to the Hopf algebras $Sym$, $QSym$, $\mathbf{WSym}$ and $\mathbf{WQSym}$.
The homogeneous component $Sym_n$ (resp. $QSym_n$, $\mathbf{WSym}_n$ and $\mathbf{WQSym}_n$) of the Hopf algebra $Sym$ (resp. $QSym$, $\mathbf{WSym}$, $\mathbf{WQSym}$)
has a monomial basis indexed by partitions (resp. compositions, set partitions, set compositions), which we denote by $\{m_{\lambda } \}_{\lambda \in \mathcal{P}_n}$ (resp. 
$\{M_{\alpha } \}_{\alpha \in \mathcal{C}_n} $, 
$\{\mathbf{m}_{\parpi } \}_{\parpi \in \mathbf{P}_n}$ and 
$\{\mathbf{M}_{\opi } \}_{\opi \in \oPi_n}$).

\subsection{Hopf algebras on graphs and posets\label{sec:GPos}}

Of interest are the Hopf algebras on graphs $\mathbf{G}$ and on posets $\mathbf{Pos}$, which are graded and connected, and whose homogeneous components  $\mathbf{G}_n$, resp. $\mathbf{Pos}_n$, are the linear span of the graphs with vertex set $[n]$, resp. partial orders in the set $[n]$.
%These are introduced, for instance, in \cite{aguiar17}.

In these graded vector spaces, define the products and coproducts in the basis elements.
For that, when $A, B$ are sets of integers with the same cardinality, we let $rl_{A, B}$ be the canonical relabelling of combinatorial objects on $A$ to combinatorial objects on $B$ that preserves the order of the labels.

Recall that the disjoint union of graphs $(V_1, E_1), (V_2, E_2)$, where $V_1 \cap V_2 = \emptyset$, is $(V_1 \sqcup V_2, E_1\sqcup E_2)$, and the restriction of a graph $G|_I$ is $(I, E(G)\cap \binom{I}{2})$.
Denote $[m] = \{ 1, \dots , m\}$ as usual, and $[m, n] =\{m, m+1, \dots , n \}$ for $n \geq m$.
Given two graphs $G_1, G_2$ with vertices labeled in $[n], [m]$ respectively, the product is the relabeled disjoint union
$$G_1 \cdot G_2 = G_1 \sqcup rl_{[m], [m+1, m+n]}(G_2)\, .$$
For the coproduct, let $G$ be a graph labeled in $[n]$, then 
$$\Delta G = \sum_{[n] = I\sqcup J } rl_{I, [\# I ] }(G|_I )  \otimes rl_{J, [\# J ] }(G|_J )  \, . $$

To define a Hopf algebra on posets, consider two posets $P_1 = (S_1, R_{P_1}), P_2 = (S_2, R_{P_2})$, where $R$ represents the set of pairs $(x, y)$ such that $x \leq y$ in the respective poset.
The disjoint union of posets is written $P_1 \sqcup P_2$ and defined as $(S_1\sqcup S_2, R_{P_1} \sqcup R_{P_2})$, and the restriction of a poset $P= (S, R_P)$ is written $P|_I$ and defined as $(I, R_P\cap (I \times I))$.
Recall that $S$ is an ideal of $P$ if whenever $x\leq y $ and $x\in S$, then $y\in S$.
Define the product between partial orders $P, Q$ in the sets $[n], [m]$, respectively, as
$$ P \cdot Q = P \sqcup rl_{[n], [n+1, n+m]}(Q) \, ,$$
and the coproduct for a partial order $P$ in $[n]$.
$$\Delta P = \sum_{S \text{ ideal of } P } rl_{S, [\# S]}(P|_S ) \otimes rl_{S^c, [n - \# S] }(P|_{S^c})  \, . $$
These operations define a Hopf algebra structure in $\mathbf{G} $ and $\mathbf{Pos}$, as described in \cite{grinberg10}.
%\todo[inline]{probabliy this section is overdone but we need some of these details}

Recall from the introduction that, for graphs, Gebhard and Sagan defined in \cite{gebhard99} the non-commutative chromatic morphism.
The following expression is given:

\begin{lm}[\citestan]\label{lm:monbasis}
For a graph $G$ we say that a set partition $\partau $ of $V(G)$ is proper if no block of $\partau $ contains an edge.
Then have that 
$$\boldsymbol{\Upsilon }_{\mathbf{G}} (G) = \sum_{\partau } \mathbf{m}_{\partau}\, ,$$
where the sum runs over all proper set partitions of $V(G)$.
\end{lm}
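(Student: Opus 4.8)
The plan is to expand the definition of $\boldsymbol{\Upsilon}_{\mathbf{G}}(G) = \sum_f \mathbf{a}_f$ by grouping proper colorings $f$ according to their underlying set partition, since the monomial basis element $\mathbf{m}_{\partau}$ of $\mathbf{WSym}$ is by definition the sum of all words $\mathbf{a}_f$ whose ``support partition'' (the partition of $[n]$ into fibers of $f$) equals $\partau$. First I would recall the precise definition of $\mathbf{m}_{\partau} \in \mathbf{WSym}_n$: for a set partition $\partau = \{\partau_1,\dots,\partau_k\}$ of $[n]$, one has $\mathbf{m}_{\partau} = \sum \mathbf{a}_{i_1}\cdots \mathbf{a}_{i_n}$ where the sum is over all tuples $(i_1,\dots,i_n)$ of positive integers such that $i_p = i_q$ if and only if $p,q$ lie in the same block of $\partau$. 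Equivalently, $\mathbf{m}_{\partau} = \sum_{f} \mathbf{a}_f$ where $f$ ranges over colorings of $[n]$ with $\parpi(f) = \partau$ (the set partition induced by the fibers of $f$, with repetitions of colors not allowed across distinct blocks and forced within blocks).

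The key observation is then the bijective/combinatorial fact: a coloring $f$ of $V(G)=[n]$ is proper if and only if its fiber partition $\parpi(f)$ contains no monochromatic edge, i.e. no block of $\parpi(f)$ contains both endpoints of an edge of $G$ — which is exactly the condition that $\parpi(f)$ is a \emph{proper set partition} in the sense of the lemma. Indeed, if $f$ is proper then for every edge $\{u,v\}$ we have $f(u)\neq f(v)$, so $u,v$ lie in different blocks of $\parpi(f)$, hence no block contains an edge; conversely, if no block of $\parpi(f)$ contains an edge, then the endpoints of each edge receive distinct colors, so $f$ is proper. Thus the set of proper colorings of $G$ is partitioned, according to the fiber partition, into classes indexed precisely by the proper set partitions $\partau$ of $V(G)$.

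Putting these together: $\boldsymbol{\Upsilon}_{\mathbf{G}}(G) = \sum_{f \text{ proper}} \mathbf{a}_f = \sum_{\partau \text{ proper set partition}} \ \sum_{f : \parpi(f) = \partau} \mathbf{a}_f = \sum_{\partau \text{ proper}} \mathbf{m}_{\partau}$, which is the claimed identity. I would write this as a short two- or three-line displayed computation, citing the definition of $\mathbf{m}_{\partau}$ for the inner sum and the equivalence ``$f$ proper $\iff$ $\parpi(f)$ proper'' for the reindexing.

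The only genuine content — and the only place that could be called an obstacle, though it is quite mild — is making sure the grouping is exhaustive and disjoint: every coloring has a well-defined fiber partition, distinct partitions give disjoint coloring classes, and every coloring with a given fiber partition $\partau$ contributes exactly once to $\mathbf{m}_{\partau}$. Since $\mathbf{WSym}$ lives in infinitely many variables, one should note that for any fixed finite set partition $\partau$ of $[n]$ there are infinitely many colorings $f$ with $\parpi(f)=\partau$ (one for each injective assignment of colors to blocks), and these are precisely the monomials appearing in $\mathbf{m}_{\partau}$; this is just the standard definition of the monomial basis of $\mathbf{WSym}$, so no convergence or reordering subtlety arises beyond what is already implicit in working in the completed algebra. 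Since this is essentially Proposition 3.2 of \cite{gebhard99}, one may alternatively just cite that reference, but the self-contained argument above is short enough to include.
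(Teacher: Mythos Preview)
Your argument is correct and is the standard proof: group proper colorings by their fiber set partition, observe that ``$f$ proper'' is equivalent to ``$\parpi(f)$ is a proper set partition,'' and recognize the inner sum as the definition of $\mathbf{m}_{\partau}$. Note, however, that the paper does not actually prove this lemma; it is stated with a citation to \cite[Proposition~3.2]{gebhard99} and used as a black box, so there is no proof in the paper to compare against. Your self-contained argument is essentially the one found in that reference.
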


\subsection{Faces  and a Hopf algebra structure of generalized permutahedra\label{sec:2gp}}

In the following we identify $I$ with $[n]$.
For a set composition $\opi = S_k|\dots | S_1 $ on $[n]$, recall that $R_{\opi}$ is a partial order on $[n]$.
For a non-empty set $J \subseteq [n] $, define the set $J_{\opi} = \{ \text{minima of J in } R_{\opi} \} = J\cap S_i$, where $i$ is the smallest index with $J \cap S_i \neq \emptyset $.
A coloring on $[n]$ is a map $f:[n]\to \mathbb{N}$.
A real coloring on $[n]$ is a map $f:[n]\to \mathbb{R}$, and we identify the real coloring $f$ with the linear function $f:\mathbb{R}^{[n]} \to \mathbb{R}$.
$$ x \mapsto \sum_{i=1}^n f(i)x_i \, .$$

In the space $\mathbb{R}^{[n]}$, we define the simplices $\mathfrak{s}_J  = \conv \{e_v| \, v \in J \} $ for each $J\subseteq [n]$.
Recall that a \textit{generalized permutahedron} is a Minkowski sum and difference of the form
$$\mathfrak{q} = \left( \sum_{\substack{J\neq \emptyset \\ a_J > 0 }} a_J \mathfrak{s}_J \right) - \left( \sum_{\substack{J\neq \emptyset \\ a_J < 0 }} |a_J| \mathfrak{s}_J \right) \, ,$$
for reals $\mathcal{L}(\mathfrak{q}) = \{a_J \}_{\emptyset \neq J \subseteq [n]}$ that can be either positive, negative or zero.
%Let us consider the cartesian space $\mathbb{R}^J$, where $J$ is a finite set.
%bada bada bada
%Recall that we define $\mathfrak{s}_J$ as the convex hull of $\{ e_j | j \in J\}$.

Recall as well that a \textit{hypergraphic polytope} is a generalized permutahedron of the form
$$\mathfrak{q} =  \sum_{J\neq \emptyset} a_J \mathfrak{s}_J  \, ,$$
for non-negative reals $\mathcal{L}(\mathfrak{q}) = \{a_J \}_{\emptyset \neq J \subseteq [n]}$.

For a polytope $\mathfrak{q}$ and a real coloring $f$ on $[n]$, we denote by $\mathfrak{q}_f$ the subset of $\mathfrak{q}$ on which $f$ is minimized, that is
$$\mathfrak{q}_f := \arg \min_{x\in\mathfrak{q}} \sum_{i\in I} f(i)x_i \, . $$
A face of $\mathfrak{q}$ is the solution to such a linear optimization problem on $\mathfrak{q}$.
A real coloring is said to be $\mathfrak{q}$-generic if the corresponding face is a point.

\begin{smpl}
Consider the hypergraphic polytope $\mathfrak{q} = \mathfrak{s}_{\{1, 2, 3\}} + \mathfrak{s}_{\{1, 2\}} $ in $\mathbb{R}^3$.
If we take the coloring of $\{1, 2, 3\}$ given by $f(1) = f(2) = 1$ and $f(3) = 3$, then $\mathfrak{q}_f = \mathfrak{s}_{\{1, 2\}} $.
If we consider the coloring $g(1) = g(3) = 2 $ and $g(2) = 1  $, then $  \mathfrak{q}_g = 2 \mathfrak{s}_{\{ 2\}}  $ is a point, so $g$ is $\mathfrak{q}$- generic.
\end{smpl}

In particular, note that if $J_1 \subseteq J_2$, then $\mathfrak{s}_{J_1}$ is a face of $ \mathfrak{s}_{J_2}$.
Incidentally, whenever $f$ is a coloring that is minimal exactly in $J_1$, we have that $\mathfrak{s}_{J_1} = (\mathfrak{s}_{J_2})_f$ .
In fact, for a real coloring $f: [n] \to \mathbb{N}$ the face corresponding to $f$ of a simplex is another simplex, specifically it we can directly compute that
\begin{equation}\label{eq:facesimpl}
(\mathfrak{s}_J)_f = \mathfrak{s}_{J_{\opi (f)}}\, .
\end{equation}

The following fact describes faces of the Minkowski sums and differences:

\begin{lm}\label{lm:minkfaces}
Let $f$ be a  real coloring and $\mathfrak{a}, \mathfrak{b} $ two polytopes.
Then $(\mathfrak{a}+ \mathfrak{b})_f = \mathfrak{a}_f + \mathfrak{b}_f$ and, if the difference $\mathfrak{a} - \mathfrak{b}$ is well defined, $(\mathfrak{a}- \mathfrak{b})_f = \mathfrak{a}_f - \mathfrak{b}_f$.
\end{lm}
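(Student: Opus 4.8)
The plan is to handle the Minkowski sum first and then deduce the difference formally from it. For the sum, I would exploit that $f$, viewed as a linear functional $x \mapsto \sum_i f(i) x_i$, is additive: for any $a \in \mathfrak{a}$ and $b \in \mathfrak{b}$ we have $f(a+b) = f(a) + f(b)$. Consequently
\[
\min_{x \in \mathfrak{a} + \mathfrak{b}} f(x) = \min_{a \in \mathfrak{a},\, b \in \mathfrak{b}} \bigl(f(a) + f(b)\bigr) = \min_{a \in \mathfrak{a}} f(a) + \min_{b \in \mathfrak{b}} f(b).
\]
Denote these three minimal values by $m$, $m_\mathfrak{a}$, $m_\mathfrak{b}$, so $m = m_\mathfrak{a} + m_\mathfrak{b}$.

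Next I would prove the two inclusions. If $a \in \mathfrak{a}_f$ and $b \in \mathfrak{b}_f$, then $f(a+b) = m_\mathfrak{a} + m_\mathfrak{b} = m$, so $a + b \in (\mathfrak{a}+\mathfrak{b})_f$; this gives $\mathfrak{a}_f + \mathfrak{b}_f \subseteq (\mathfrak{a}+\mathfrak{b})_f$. Conversely, take $x \in (\mathfrak{a}+\mathfrak{b})_f$ and write $x = a + b$ with $a \in \mathfrak{a}$, $b \in \mathfrak{b}$. Since $f(a) \ge m_\mathfrak{a}$ and $f(b) \ge m_\mathfrak{b}$ while $f(a) + f(b) = f(x) = m = m_\mathfrak{a} + m_\mathfrak{b}$, both inequalities must be equalities, so $a \in \mathfrak{a}_f$ and $b \in \mathfrak{b}_f$, i.e. $x \in \mathfrak{a}_f + \mathfrak{b}_f$. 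Hence $(\mathfrak{a}+\mathfrak{b})_f = \mathfrak{a}_f + \mathfrak{b}_f$. The only point requiring a little care here is that in the decomposition $x = a + b$ one really does land in the respective faces, and this is exactly what the equality-of-minima argument supplies.

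For the difference, suppose $\mathfrak{c} := \mathfrak{a} - \mathfrak{b}$ is well defined, so $\mathfrak{b} + \mathfrak{c} = \mathfrak{a}$. Applying the sum formula just proved to $\mathfrak{b}$ and $\mathfrak{c}$ gives $\mathfrak{b}_f + \mathfrak{c}_f = (\mathfrak{b}+\mathfrak{c})_f = \mathfrak{a}_f$. Thus $\mathfrak{c}_f$ is a polytope whose Minkowski sum with $\mathfrak{b}_f$ equals $\mathfrak{a}_f$; since the Minkowski difference, when it exists, is by definition the unique such polytope, this shows $\mathfrak{a}_f - \mathfrak{b}_f$ is well defined and equals $\mathfrak{c}_f = (\mathfrak{a} - \mathfrak{b})_f$, as claimed. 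I do not anticipate a genuine obstacle; the argument is elementary, and the subtlest step is simply the rigidity of the decomposition $x = a+b$ in the sum case, already addressed above.
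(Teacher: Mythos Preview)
Your proposal is correct and follows essentially the same route as the paper: use linearity of $f$ to show that the minimum over $\mathfrak{a}+\mathfrak{b}$ splits as the sum of the minima, deduce $(\mathfrak{a}+\mathfrak{b})_f = \mathfrak{a}_f + \mathfrak{b}_f$ from the equality case, and then obtain the difference formula by applying the sum identity to $\mathfrak{c}:=\mathfrak{a}-\mathfrak{b}$ and $\mathfrak{b}$ and invoking uniqueness of the Minkowski difference. Your write-up is slightly more explicit about the two inclusions, but the argument is the same.
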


\begin{proof}
Suppose that $m_{\mathfrak{a}}, m_{\mathfrak{b}} $ are the minima of $f$ in the polytopes $\mathfrak{a}, \mathfrak{b}$.
Let $x \in \mathfrak{a} + \mathfrak{b}$.
So $x= a+b $ for some $a\in\mathfrak{a}, b\in\mathfrak{b}$.

Then $f(x) = f(a) + f(b) \geq m_{\mathfrak{a}} + m_{\mathfrak{b}}$.
We have equality if and only if we have $ a\in \mathfrak{a}_f, b\in\mathfrak{b}_f$, that is when $x\in \mathfrak{a}_f +\mathfrak{b}_f$.

Now $(\mathfrak{a}- \mathfrak{b})_f = \mathfrak{a}_f - \mathfrak{b}_f$ follows because $(\mathfrak{a}- \mathfrak{b})_f + \mathfrak{b}_f= \mathfrak{a}_f$ by the above.
\end{proof}

\begin{defin}[Normal fan of a polytope]
A cone is a subset of an $\mathbb{R}$-vector space that is closed for addition and multiplication by positive scalars.
For a polytope $\mathfrak{q} $ and $F\subseteq \mathfrak{q}$ one of its faces, we define its normal cone
$$\mathcal{N}_{\mathfrak{q}}(F):= \{f:[n] \to \mathbb{R} | \, \, \mathfrak{q}_f = F \} \, .$$
%That is, the corresponding normal cone is the set of real colorings $f$ whose minimum in $\mathfrak{q}$ is attained in the face $F$, explicitly

This is a cone in the dual space of $\mathbb{R}^n $.
Moreover, the normal cones of all the faces of $\mathfrak{q} $ partition $(\mathbb{R}^n )^*$ into cones $\mathcal{N}_{\mathfrak{q}}= \{\mathcal{N}_{\mathfrak{q}}(F) | \, F \text{ is a face of } \mathfrak{q} \}$.
This is the \textit{normal fan} of $\mathfrak{q}$.
\end{defin}

\begin{smpl}[The normal fan of the $n$-permutahedron - The braid fan]\label{smpl:permnc}
The faces of the permutahedron are indexed by $\oPi_n$.
In particular, the corresponding normal cone of the face $F_{\opi} $, corresponding to $\opi \in \oPi_n $, is 
$$\mathcal{N}(F_{\opi} ) = \{f: [n] \to \mathbb{R} | \opi(f) = \opi \} \, . $$
\end{smpl}

In the introduction we referred two other definitions of generalized permutahedra that are present in the literature.
We recover them here, and justify their equivalence:

\begin{lm}[Definition 1 of generalized permutahedra, see \cite{aguiar17}]\label{lm:normfan}
A polytope $\mathfrak{q}$ is a generalized permutahedron in the sense of \eqref{eq:gpcoefs} if and only if its normal fan coarsens the one of the permutahedron.
Specifically, for any two real colorings $f_1, f_2$, if $\opi(f_1)=\opi(f_2)$ then $\mathfrak{q}_{f_1} = \mathfrak{q}_{f_2}$.
\end{lm}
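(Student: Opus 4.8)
The plan is to prove the two implications separately, using the simplex expansion \eqref{eq:gpcoefs} together with \cref{lm:minkfaces} and \eqref{eq:facesimpl} to control faces explicitly.

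For the "only if" direction, suppose $\mathfrak{q}$ is a generalized permutahedron in the sense of \eqref{eq:gpcoefs}, so $\mathfrak{q} = \left( \sum_{J \in A_+} a_J \mathfrak{s}_J \right) - \left( \sum_{J \in A_-} |a_J| \mathfrak{s}_J \right)$. Fix two real colorings $f_1, f_2$ with $\opi(f_1) = \opi(f_2)$. By \cref{lm:minkfaces} applied repeatedly, $\mathfrak{q}_{f_i} = \left( \sum_{J \in A_+} a_J (\mathfrak{s}_J)_{f_i} \right) - \left( \sum_{J \in A_-} |a_J| (\mathfrak{s}_J)_{f_i} \right)$. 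By \eqref{eq:facesimpl}, $(\mathfrak{s}_J)_{f_i} = \mathfrak{s}_{J_{\opi(f_i)}}$, which depends only on $\opi(f_i)$; since $\opi(f_1) = \opi(f_2)$, every term agrees, hence $\mathfrak{q}_{f_1} = \mathfrak{q}_{f_2}$. This shows the normal fan of $\mathfrak{q}$ coarsens the braid fan (using \cref{smpl:permnc}: the braid cones are exactly the sets $\{f : \opi(f) = \opi\}$, and we have shown $\mathfrak{q}_f$ is constant on each such cone, so each braid cone lies in a single normal cone of $\mathfrak{q}$).

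For the "if" direction, suppose the normal fan of $\mathfrak{q}$ coarsens the braid fan; I would show $\mathfrak{q}$ admits a representation of the form \eqref{eq:gpcoefs}. The standard route is via support functions: for each $J$, let $z_J = -\min_{x \in \mathfrak{q}} \sum_{i \in J} x_i$ (the value of the linear functional $e_J = \sum_{i\in J} e_i$, whose sign convention matches the hyperplane description recalled from \cite{postnikov09}), so that $\mathfrak{q} = \{x : \sum_{i\in J} x_i \le z_J \text{ for } \emptyset \ne J \subsetneq [n],\ \sum_{i} x_i = z_{[n]}\}$ — this is exactly where the coarsening hypothesis is used, since it guarantees the facet normals of $\mathfrak{q}$ are among the $e_J$ and thus $\mathfrak{q}$ is cut out by these inequalities. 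Then one defines $a_J$ via the standard Möbius-type inversion relating the $z_J$ to simplex coefficients (each $\mathfrak{s}_J$ has support function $z_K(\mathfrak{s}_J) = -\mathbb{1}[K \cap J \ne \emptyset]$), solves the resulting triangular linear system for $\{a_J\}$, and checks that the Minkowski difference in \eqref{eq:gpcoefs} is well-defined precisely because $\mathfrak{q}$ itself is a genuine polytope realizing these support values.

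The main obstacle is the "if" direction: one must justify that the hyperplane description is valid (i.e.\ that coarsening the braid fan forces the facet-defining functionals to be of the form $e_J$) and that the resulting Minkowski sum/difference actually reconstructs $\mathfrak{q}$ rather than some larger polytope. I would handle this by invoking the equivalence with the hyperplane-description definition from \cite{postnikov09} quoted in the excerpt — a polytope whose normal fan coarsens the braid fan has all its edges parallel to root directions $e_i - e_j$, so its facet normals lie among the $\pm e_J$, and the constant-sum condition eliminates the ambiguity — and then the inversion giving $\{a_J\}$ is a routine finite linear-algebra computation that I would not carry out in full detail here. The full justification of these equivalences is the content of the surrounding discussion in \cref{sec:2gp}.
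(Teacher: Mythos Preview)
Your proposal is correct and matches the paper's treatment: the ``only if'' direction is exactly the argument the paper gives in \cref{prop:gperequiv} (via \cref{lm:minkfaces} and \eqref{eq:facesimpl}), and the paper does not prove the ``if'' direction either but instead cites \cite[Theorem 12.3]{aguiar17} for the equivalence with the hyperplane description and \cite[Proposition 2.4]{ardila10} for the M\"obius-inversion step recovering the $a_J$, which is precisely the route you sketch. Your acknowledgement that the hard direction is deferred to the surrounding discussion in \cref{sec:2gp} is exactly what the paper does.
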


Define the polytope $\mathcal{P}_n^z(\{z_I\}_{\emptyset \neq I\subseteq [n]} )$ in the plane $\sum_i x_i = z_{[n]}$ given by the inequalities
$$ \sum_{i\in I} x_i \geq z_I \, , $$
for some real numbers $\{ z_I\}_{\emptyset \neq I\subseteq [n] }$.

\begin{lm}[Definition 2 of generalized permutahedra, see \cite{postnikov09}]\label{lm:facets}
A polytope is a generalized permutahedron if it can be expressed as $\mathcal{P}_n^z(\{z_I\}_{\emptyset \neq I\subseteq [n]} )$ for real numbers $\{ z_I\}_{I\subseteq [n] }$ such that 
$$ z_I + z_J \leq z_{I\cup J} + z_{I\cap J} \, , $$
for all non-empty sets $I, J\subseteq [n]$ that are not disjoint.
\end{lm}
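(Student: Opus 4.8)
The strategy is to pass through the normal-fan characterization of \cref{lm:normfan}, which is already established, so it suffices to show that a polytope of the form $\mathcal{P}_n^z(\{z_I\})$ with the submodularity inequalities $z_I + z_J \leq z_{I\cup J} + z_{I\cap J}$ has normal fan coarsening the braid fan, and conversely that every generalized permutahedron in the sense of \eqref{eq:gpcoefs} arises this way. For the forward direction I would fix a real coloring $f:[n]\to\mathbb{R}$ and analyze the linear program $\min_{x\in\mathcal{P}_n^z} \sum_i f(i)x_i$. Reorder coordinates so that $f$ is weakly decreasing along the identity, i.e. the set composition $\opi(f)$ refines the chain $[n]\supseteq \{\text{top block}\}^c \supseteq\cdots$; the claim is that the optimal face depends only on $\opi(f)$. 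Concretely, the standard greedy/polymatroid argument shows the minimum of $\sum f(i)x_i$ over $\mathcal{P}_n^z$ is attained at the vertex $v$ whose coordinates are the successive differences $v_i = z_{U_i} - z_{U_{i-1}}$, where $U_0\subsetneq U_1\subsetneq\cdots\subsetneq U_n=[n]$ is the flag of initial segments determined by ordering $[n]$ by decreasing $f$-value; submodularity is exactly what guarantees this point lies in $\mathcal{P}_n^z$ and is optimal (it certifies feasibility of the dual). When $f$ is not generic, the optimal face is the convex hull of the analogous vertices over all linear extensions of the preorder $R_{\opi(f)}$, and this depends only on $\opi(f)$ — giving the coarsening condition and hence, by \cref{lm:normfan}, that $\mathcal{P}_n^z$ is a generalized permutahedron in the sense of \eqref{eq:gpcoefs}.

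For the converse, I would start from a generalized permutahedron $\mathfrak{q}$ as in \eqref{eq:gpcoefs} and produce the $z_I$. The cleanest route is to define $z_I := \min_{x\in\mathfrak{q}} \sum_{i\in I} x_i$, i.e. use the support-function values in the directions $-\mathbbm{1}_I$. Then automatically $\mathfrak{q}\subseteq \mathcal{P}_n^z(\{z_I\})$ and $\sum_{i\in[n]} x_i = z_{[n]}$ on $\mathfrak{q}$ since $\mathfrak{q}$ lies in that hyperplane (the simplices $\mathfrak{s}_J$ all do, and Minkowski sums/differences respect it). Submodularity $z_I + z_J \le z_{I\cup J} + z_{I\cap J}$ follows from the coarsening property: by \cref{lm:minkfaces} it suffices to check it for a single simplex $\mathfrak{s}_J$, where $z_K = -1$ if $K\cap J\neq\emptyset$ and $0$ otherwise, and one checks the inequality $(\text{on } I)+(\text{on }J_{\text{set}}) \le (\text{on }I\cup J_{\text{set}})+(\text{on }I\cap J_{\text{set}})$ case by case; additivity under Minkowski sum extends it to all of $\mathfrak{q}$ after verifying it is preserved under Minkowski difference (using $z^{\mathfrak{a}-\mathfrak{b}} = z^{\mathfrak{a}} - z^{\mathfrak{b}}$, which comes from \cref{lm:minkfaces} applied to the relevant colorings). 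Finally, the reverse inclusion $\mathcal{P}_n^z(\{z_I\})\subseteq\mathfrak{q}$ follows because both polytopes lie in the hyperplane $\sum x_i = z_{[n]}$ and have the same support function in every direction: for a direction $-\mathbbm{1}_I$ this is the definition of $z_I$, and for a general direction $f$ the forward direction already identified $\min_{\mathcal{P}_n^z} f$ with a value determined by the flag data, which matches $\min_{\mathfrak q} f$ by the polymatroid computation applied to each simplex summand and \cref{lm:minkfaces}.

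The main obstacle I anticipate is the bookkeeping around Minkowski \emph{differences}: all the clean statements (support functions are additive, submodularity is preserved, the greedy vertex formula) are transparent for Minkowski sums of simplices, but \eqref{eq:gpcoefs} genuinely allows subtraction, and one must check that a well-defined Minkowski difference $\mathfrak{a}-\mathfrak{b}$ still has $z^{\mathfrak{a}-\mathfrak{b}}_I = z^{\mathfrak{a}}_I - z^{\mathfrak{b}}_I$ submodular — submodularity is \emph{not} obviously inherited by differences of submodular functions in general, so the key point is that here $z^{\mathfrak{a}-\mathfrak{b}}$ is itself the support function of an honest polytope, and \cref{lm:normfan} (coarsening the braid fan is equivalent to submodularity of the support function) closes the loop. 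I would isolate this as the one lemma requiring real care; the rest is the classical greedy-algorithm-for-polymatroids argument dressed in the paper's notation.
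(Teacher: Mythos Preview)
The paper does not actually prove \cref{lm:facets}: it is stated as a characterization with a reference to \cite{postnikov09}, and the paragraph immediately following the lemma simply cites \cite[Theorem 12.3]{aguiar17} for the equivalence with \cref{lm:normfan} and \cite[Proposition 2.4]{ardila10} for the passage to the Minkowski form \eqref{eq:gpcoefs}. The only argument the paper supplies directly is \cref{prop:gperequiv}, the single direction that a polytope of the form \eqref{eq:gpcoefs} has normal fan coarsening the braid fan. So you are attempting considerably more than the paper does; your greedy/polymatroid strategy is the standard route and is essentially the content of the cited references.

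Two remarks on the sketch itself. First, your explicit computation of $z_K$ for a single simplex $\mathfrak{s}_J$ is off: with the paper's convention $z_K = \min_{x\in\mathfrak{s}_J}\sum_{i\in K}x_i$ one gets $z_K = \mathbb{1}[J\subseteq K]$, not $-\mathbb{1}[K\cap J\neq\emptyset]$; the supermodularity check still goes through. Second, and more importantly, your handling of the Minkowski difference is the place you yourself flag as delicate, and the proposed fix (``\cref{lm:normfan} \ldots\ closes the loop'') is circular as written: you are invoking that braid-fan coarsening is equivalent to supermodularity of the $z_I$, which is exactly the equivalence \cref{lm:normfan}$\Leftrightarrow$\cref{lm:facets} under discussion. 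The clean resolution is to prove directly that braid-fan coarsening implies the inequality $z_I + z_J \le z_{I\cup J} + z_{I\cap J}$ (for instance by examining the face minimizing the coloring $\mathbb{1}_I + \mathbb{1}_J$ and using that all edges of $\mathfrak{q}$ are parallel to root directions $e_i - e_j$), and then combine this with \cref{prop:gperequiv}, rather than trying to push supermodularity through the difference $z^{\mathfrak a} - z^{\mathfrak b}$.
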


In \cite[Theorem 12.3]{aguiar17}, is it shown that these two last notions of generalized permutahedra are equivalent.
That is, a polytope $\mathfrak{q}$ is of the form $\mathfrak{q} = \mathcal{P}_n^z(\{z_I\}_{\emptyset \neq I\subseteq [n]} )$ for real numbers $\{ z_I\}_{\emptyset \neq I\subseteq [n] }$ if and only if its normal fan coarsens the one from the permutahedron.

In \cite[Proposition 2.4]{ardila10}, Ardila, Benedetti and Doker show that any generalized permutahedron has an expression of the from given by \cref{eq:gpcoefs}.
The main feature in that proof is the following:
for real numbers $\{ z_I\}_{\emptyset \neq I\subseteq [n] }$ such that $ z_I + z_J \geq z_{I\cup J} + z_{I\cap J}$, if we choose reals $\{a_J\}_{\emptyset\neq J\subseteq [n]}$ such that $z_I = \sum_{\emptyset \neq J\subseteq I} a_J $, then \cref{eq:gpcoefs} gives us a well defined polytope and in fact defines the same polytope as $\mathcal{P}_n^z(\{z_I\}_{\emptyset \neq I\subseteq [n]} )$.

In the following we establish that the normal fan of a polytope of the form \cref{eq:gpcoefs} coarsens the one of the $n$-permutahedron, concluding with the above that the three definitions of generalized permutahedra presented are equivalent.

\begin{prop}\label{prop:gperequiv}
Let $\mathfrak{q} $ be a polytope of the form
$$ \mathfrak{q} = \left( \sum_{J \in A_+ } a_J \mathfrak{s}_J \right) - \left( \sum_{J\in A_-} |a_J| \mathfrak{s}_J \right) \, , $$
for reals $\mathcal{L}(\mathfrak{q}) = \{a_J \}_{\emptyset \neq J \subseteq [n]}$ that can be either positive, negative or zero, and $A_+ = \{J |   a_J>0 \} $ and $ A_- = \{J |   a_J<0 \}  $.
Then its normal fan coarsens the one of the permutahedron.
\end{prop}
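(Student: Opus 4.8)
The plan is to reduce everything to the already-established facts about Minkowski sums and faces, and then to \cref{lm:normfan}. The key observation is that \cref{lm:minkfaces} tells us exactly how the face operator $\mathfrak{q}\mapsto\mathfrak{q}_f$ interacts with Minkowski sums and (well-defined) differences, while \cref{eq:facesimpl} pins down the faces of the simplices $\mathfrak{s}_J$. So the engine of the proof is: for any polytope built from simplices by $+$ and $-$, its face in direction $f$ is determined by the data $\{(\mathfrak{s}_J)_f\}_J$, which by \cref{eq:facesimpl} depends only on the set composition type $\opi(f)$. Combining this with the equivalence criterion stated in \cref{lm:normfan} — that $\mathfrak{q}$ is a generalized permutahedron in the sense of \eqref{eq:gpcoefs} if and only if $\opi(f_1)=\opi(f_2)$ implies $\mathfrak{q}_{f_1}=\mathfrak{q}_{f_2}$ — gives the result, provided we know $\mathfrak{q}$ really is a polytope (i.e. the Minkowski difference exists), which is part of the hypothesis.

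First I would take two real colorings $f_1, f_2$ with $\opi(f_1)=\opi(f_2)$. By \cref{eq:facesimpl}, for every $J$ we have $(\mathfrak{s}_J)_{f_1}=\mathfrak{s}_{J_{\opi(f_1)}}=\mathfrak{s}_{J_{\opi(f_2)}}=(\mathfrak{s}_J)_{f_2}$. Next I would apply \cref{lm:minkfaces} repeatedly: since $\mathfrak{q}=\bigl(\sum_{J\in A_+}a_J\mathfrak{s}_J\bigr)-\bigl(\sum_{J\in A_-}|a_J|\mathfrak{s}_J\bigr)$ is a valid Minkowski sum/difference by hypothesis, the lemma gives
\[
\mathfrak{q}_{f} = \left(\sum_{J\in A_+}a_J(\mathfrak{s}_J)_f\right) - \left(\sum_{J\in A_-}|a_J|(\mathfrak{s}_J)_f\right),
\]
where I should note that \cref{lm:minkfaces} handles dilations $a_J\mathfrak{s}_J$ since $(a\mathfrak{a})_f = a\,\mathfrak{a}_f$ for $a>0$ (this follows immediately from the definition of the face, or one applies the sum statement of the lemma $a_J$-many times in the integer case and passes to the general case by the same argument). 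Feeding in the equality $(\mathfrak{s}_J)_{f_1}=(\mathfrak{s}_J)_{f_2}$ for each $J$, the right-hand sides for $f_1$ and $f_2$ coincide term by term, so $\mathfrak{q}_{f_1}=\mathfrak{q}_{f_2}$. Finally I would invoke \cref{lm:normfan}: the condition just verified is precisely the statement that the normal fan of $\mathfrak{q}$ coarsens the braid fan (cf. \cref{smpl:permnc}, where the braid cones are exactly the loci of constant $\opi(f)$), which completes the proof.

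The main subtlety — the step I expect to need the most care — is legitimizing the "apply \cref{lm:minkfaces} to the whole expression" move when a genuine Minkowski difference is present. The cleanest way is to argue inductively on the expression: write $\mathfrak{q}+\sum_{J\in A_-}|a_J|\mathfrak{s}_J = \sum_{J\in A_+}a_J\mathfrak{s}_J$ (valid by the definition of the Minkowski difference, which the hypothesis asserts exists), take faces of both sides using only the sum part of \cref{lm:minkfaces}, obtain $\mathfrak{q}_f+\sum_{J\in A_-}|a_J|(\mathfrak{s}_J)_f = \sum_{J\in A_+}a_J(\mathfrak{s}_J)_f$, and then use that the face of a dilated simplex depends only on $\opi(f)$ to conclude $\mathfrak{q}_{f_1}$ and $\mathfrak{q}_{f_2}$ satisfy the same such identity; since $\mathfrak{s}_{J'}$ is cancellative under Minkowski sum (it is a nonempty polytope, and Minkowski summands are determined when they exist), $\mathfrak{q}_{f_1}=\mathfrak{q}_{f_2}$. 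This avoids ever having to "take the face of a difference" directly, relying instead only on the sum statement plus uniqueness of Minkowski complements.
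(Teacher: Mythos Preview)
Your proposal is correct and follows essentially the same approach as the paper's own proof: both use \cref{lm:minkfaces} together with \eqref{eq:facesimpl} to compute $\mathfrak{q}_f$ as an explicit signed Minkowski sum of simplices $\mathfrak{s}_{J_{\opi(f)}}$ depending only on $\opi(f)$, and then invoke the criterion in \cref{lm:normfan} (via \cref{smpl:permnc}). You are in fact more careful than the paper about the dilation step and about justifying the Minkowski-difference application of \cref{lm:minkfaces}; the paper simply applies the lemma directly to the full signed expression without further comment.
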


\begin{proof}
Let $f$ be a real coloring of $[n]$.
As a consequence of \cref{smpl:permnc} and as discussed in the end of \cref{lm:normfan}, it is enough to establish that if $f_1, f_2$ satisfy $\opi(f_1) = \opi(f_2) $ then $\mathfrak{q}_{f_1} = \mathfrak{q}_{f_2} $.
In fact, from \cref{lm:minkfaces} and \eqref{eq:facesimpl}, we have
$$ \mathfrak{q}_{f}\! =\! \left( \sum_{J \in A_+ } a_J (\mathfrak{s}_J)_{f} \right)\! -\! \left( \sum_{J\in A_-} |a_J| (\mathfrak{s}_J)_{f} \right)\!\! =\!\!  \left( \sum_{J \in A_+ } a_J \mathfrak{s}_{J_{\opi(f)}} \right)\! -\! \left( \sum_{J\in A_-} |a_J| \mathfrak{s}_{J_{\opi(f)}} \right) \, , $$
which clearly only depend on the set composition type of the real coloring $f$.
\end{proof}

Denote by $\mathfrak{q}_{\opi}$ the face on $\mathfrak{q}$ that is the solution to any linear optimization problem on $\mathfrak{q}$ for a real coloring $f$ with composition type $\opi(f) = \opi$, so that 
\begin{equation}\label{eq:facepermutahedra}
\mathfrak{q}_{\opi } = \left( \sum_{J \in A_+ } a_J \mathfrak{s}_{J_{\opi}} \right) - \left( \sum_{J\in A_-} |a_J| \mathfrak{s}_{J_{\opi}} \right)  \, .
\end{equation}
The following is a consequence of \cref{lm:normfan}:

\begin{prop}\label{prop:psiisquasisym}
If $\mathfrak{q} $ is a generalized permutahedron, then 
\begin{equation}\label{eq:uhsmiswqsym}
 \uhsm_{\gpHa } (\mathfrak{q}) = \sum_{f \, \mathfrak{q}\text{-generic }} \mathbf{a}_f   = \sum_{\mathfrak{q}_{\opi} = \pt } \mathbf{M}_{\opi }  \in \mathbf{WQSym}_n \, .
\end{equation}
\end{prop}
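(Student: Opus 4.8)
The plan is to establish the two equalities in \eqref{eq:uhsmiswqsym} separately. The first equality, $\uhsm_{\gpHa}(\mathfrak{q}) = \sum_{f \text{ } \mathfrak{q}\text{-generic}} \mathbf{a}_f$, is simply the definition of $\uhsm_{\gpHa}$, so nothing needs to be proven there. The content is the second equality, namely that this sum over $\mathfrak{q}$-generic colorings reorganizes into a sum of monomial basis elements $\mathbf{M}_{\opi}$ of \textbf{WQSym} indexed by those set compositions $\opi$ for which the face $\mathfrak{q}_{\opi}$ is a point. Recall that by definition $\mathbf{M}_{\opi} = \sum_{f : \opi(f) = \opi} \mathbf{a}_f$, the sum over all colorings $f$ whose set composition type is exactly $\opi$.

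The key step is \cref{lm:normfan}: since $\mathfrak{q}$ is a generalized permutahedron, its normal fan coarsens the braid fan, so whenever two colorings $f_1, f_2$ satisfy $\opi(f_1) = \opi(f_2)$ we have $\mathfrak{q}_{f_1} = \mathfrak{q}_{f_2}$. In particular, the property of $f$ being $\mathfrak{q}$-generic (i.e. $\mathfrak{q}_f$ being a point) depends only on $\opi(f)$. Therefore I would partition the set of $\mathfrak{q}$-generic colorings according to their set composition type: a coloring $f$ is $\mathfrak{q}$-generic if and only if $\mathfrak{q}_{\opi(f)}$ is a point, where $\mathfrak{q}_{\opi}$ is the well-defined face associated to the set composition $\opi$ as discussed just before the proposition. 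Grouping the sum $\sum_{f \text{ } \mathfrak{q}\text{-generic}} \mathbf{a}_f$ by the common value $\opi = \opi(f)$ then gives
$$\sum_{f \text{ } \mathfrak{q}\text{-generic}} \mathbf{a}_f = \sum_{\opi : \mathfrak{q}_{\opi} = \pt} \left( \sum_{f : \opi(f) = \opi} \mathbf{a}_f \right) = \sum_{\opi : \mathfrak{q}_{\opi} = \pt} \mathbf{M}_{\opi},$$
which is exactly the claimed identity. Finally, since this is a finite $\mathbb{K}$-linear combination of monomial basis elements $\mathbf{M}_{\opi}$ with $\opi \in \oPi_n$ (as every $\mathfrak{q}$-generic coloring of $[n]$ uses a partition of $[n]$ into at most $n$ color classes), the result lies in $\mathbf{WQSym}_n$.

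I expect no serious obstacle here; the proposition is essentially a bookkeeping consequence of \cref{lm:normfan} together with the definition of the monomial basis of \textbf{WQSym}. The only point requiring a sentence of care is making sure the reader sees that "$\mathfrak{q}_f$ is a point" is genuinely a function of $\opi(f)$ alone — this is precisely what \cref{lm:normfan} (and the explicit formula for $\mathfrak{q}_{\opi}$ preceding the proposition) provides — and that the rearrangement of the sum is legitimate because the $\mathbf{a}_f$ for distinct $f$ are distinct monomials, so no cancellation or convergence issue arises in this graded (degree $n$) setting.
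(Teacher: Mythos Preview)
Your proposal is correct and matches the paper's approach exactly: the paper does not give an explicit proof, stating only that the proposition is a consequence of \cref{lm:normfan}, and your argument spells out precisely this consequence by grouping $\mathfrak{q}$-generic colorings according to their set composition type.
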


We now turn away from the face structure of generalized permutahedra and debate its Hopf algebra structure, introduced in \cite{aguiar17}.
As usual, consider a generalized permutahedron $\mathfrak{q}$ given by \eqref{eq:gpcoefs}.
If $\opi = A | B$ is a set composition of $[n]$, then $ \mathfrak{q}_{\opi }$ can be written as a Minkowski sum of polytopes
$$ \mathfrak{q}_{\opi } =: \mathfrak{q}|_A  + \mathfrak{q}\backslash_A \, ,$$
where $\mathfrak{q}|_A $ is a generalized permutahedron in $\mathbb{R}^A $ and $\mathfrak{q}\backslash_A $ is a generalized permutahedron on $\mathbb{R}^B $.
%These will be identified with generalized permutahedra in $\mathbb{R}^A$ and $\mathbb{R}^B$ without further notice.
Note that $B = A^c $ so the dependence of $\mathfrak{q}|_A $ and $\mathfrak{q}\backslash_A $ on $B$ is implicit.

We can obtain explicit expressions for $\mathfrak{q}|_A $ and $\mathfrak{q}\backslash_A $:
$$\mathfrak{q}|_A\!  =\!  \left( \sum_{\substack{J\in A_+ \\ J\not\subseteq B }} a_J \mathfrak{s}_{J\cap A} \right) - \left( \sum_{\substack{J \in A_- \\J\not\subseteq B }} |a_J| \mathfrak{s}_{ J\cap A }\right) \, , \mathfrak{q}\backslash_A\!  =\!  \left( \sum_{\substack{J\in A_+ \\ J\subseteq B}} a_J \mathfrak{s}_J \right) - \left( \sum_{\substack{J\in A_- \\J\subseteq B  }} |a_J| \mathfrak{s}_J \right) \, .$$

We have now all the material to endow the space of generalized permutahedra with a Hopf algebra structure according to \cite{aguiar17}:
let $\mathbf{GP} = \oplus_{n\geq 0} \mathbf{GP}_n$, where $\mathbf{GP}_n $ is the free linear space on generalized permutahedra in $\mathbb{R}^n$.

The $\mathbf{GP }$ linear space has the following product, when $\mathfrak{q}_1, \mathfrak{q}_2$ are generalized permutahedra in $\mathbb{R}^n, \mathbb{R}^m$ respectively:
$$ \mathfrak{q}_1 \cdot \mathfrak{q}_2 = \mathfrak{q}_1 \times rl_{[m], [m+1, m+n]}(\mathfrak{q}_2 ) \, .$$

The $\mathbf{GP}$ linear space has the following coproduct, when $\mathfrak{q} $ is a generalized permutahedron in $\mathbb{R}^n $:
$$ \Delta \mathfrak{q} = \sum_{A \subseteq [n]} rl_{A, [\# A ]}(\mathfrak{q}|_A ) \otimes rl_{A^c, [n - \# A]} (\mathfrak{q}\backslash_A) \, . $$

\begin{rem}
Note that the span of the fundamental hypergraphic polytopes does not form a Hopf algebra, as it is not stable for the coproduct.
\end{rem}

\section{Main theorems on graphs\label{sec:3}}
In this section we prove \cref{thm:graphkernel,thm:graphcomukernel}, which follow from \cref{lm:kernandimdesc,lm:kernofcomp}.
We also discuss an application of \cref{thm:graphcomukernel} on the tree conjecture, by constructing a new graph invariant $\tilde{\Psi } (G )$ that satisfies the modular relations.

For a set partition $\parpi $, we define the graph $K_{\parpi}$ where $\{ i, j\} \in E(K_{\parpi}) $ if $i \sim_{\parpi } j$.
This graph is the disjoint union of the complete graphs on the blocks of $\parpi$.
We denote the complement of $K_{\parpi }$ as $K_{\parpi}^c $.
Note that a set partition $\partau$ is proper in $K_{\parpi}^c$ if and only if $\partau \leq \parpi $ in the coarsening order on set partitions.
Hence, 
as a consequence of \cref{lm:monbasis},
\begin{equation}\label{eq:fundgraphcase}
\boldsymbol{\Upsilon }_{\gHa } (K_{\parpi}^c ) = \sum_{\partau \leq \parpi} \mathbf{m}_{\partau }.
\end{equation}

We now show that the kernel of $\boldsymbol{\Upsilon }_{\mathbf{G}} $ is spanned by the modular relations.

\textit{Proof of \cref{thm:graphkernel}}.
Recall that $\mathbf{G}_n$ is spanned by graphs with vertex set $[n]$.
We choose an order $\tilde{\geq } $ in this family of graphs in a way that the number of edges is non-decreasing.

From \eqref{eq:fundgraphcase}, we know that the transition matrix of $\{\boldsymbol{\Upsilon }_{\mathbf{G}}(K_{\parpi }^c ) | \parpi \in \mathbf{P}_n  \}$ over the monomial basis of \textbf{WSym} is upper triangular, hence forms a basis set of $\mathbf{WSym}$.
In particular, $\im \uhsm_{\gHa} = \mathbf{WSym }$.

In order to apply \cref{lm:kernandimdesc} to the set of modular relations on graphs,
%, we will show that $ \{ K^c_{\parpi} | \parpi \in \mathbf{P}_{[n] }\} $ is a stable generator.
it suffices to show the following:
if a graph $G$ is not of the form $K^c_{\parpi} $, then we can find a formal sum $ G - G\cup\{e_1\} - G\cup \{ e_2\} + G\cup\{e_1, e_2\}$ that is a modular relation.
Indeed, $G$ is the graph with least edges in that expression, so it is the smallest in the order $\tilde{\geq } $.
It follows from \cref{lm:kernandimdesc} that the modular relations generate the space $\ker \boldsymbol{\Upsilon }_{\mathbf{G}} $.

To find the desired modular relation, it is enough to find a triangle $\{e_1, e_2, e_3 \}$ such that $e_1, e_2 \not \in E(G)$ and $e_3 \in E (G)$.
Consider $\partau $, the set partition given by the connected components of $G^c$, so that $G \supseteq K^c_{\partau}$.
By hypothesis, $G \neq K^c_{\partau }$, so there are vertices $u, w $ in the same block of $\partau $ that are not neighbors in $G^c $.
Without loss of generality we can take such $u, w$ that are at distance 2 in $G^c$, so they have a common neighbor $v$ in $G^c $ (see example in \cref{fig:gtp}).

\begin{figure}[ht]
%\begin{wrapfigure}{h}{0.5\textwidth}
\centering
\includegraphics[width=0.45\textwidth]{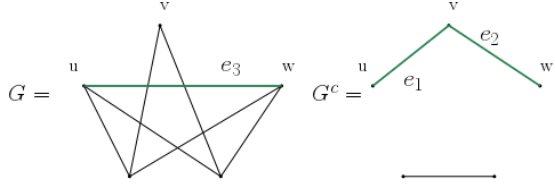}
\caption{\label{fig:gtp}Choice of edges in proof of \cref{thm:graphkernel}}
%\end{wrapfigure}
\end{figure}

%Take the shortest path $v_1 - v_2 - \cdots -v_k - w $ in $G^c$ that connects $v_1 $ and $w$.
%By hypotheses we have that $k \geq 2$, so by minimality we have that $\{v_{k-1}, w \} \not \in G^c$.
The edges $e_1 = \{v, u \}$, $e_2 = \{v, w \} $ and $e_3 = \{u, w \}$ form the desired triangle, concluding the proof.
%is the desired modular relation.
\qed

\begin{proof}[Proof of \cref{thm:graphcomukernel}]
It is clear that $\Psi_{\mathbf{G}} $ is surjective, since $\uhsm_{\gHa } $ is surjective.
Now, our goal is to apply \cref{lm:kernofcomp} to the map $\Psi_{\mathbf{G}} = \comu \circ \boldsymbol{\Upsilon }_{\mathbf{G}}$ %for the stable generator set $\{ K_{\parpi }^c   | \parpi \in \mathbf{P}_n \}$ and 
and the equivalence relation corresponding to graph isomorphism.
First, note that if $\lambda(\parpi ) = \lambda(\partau) $ then $K_{\parpi}^c$ and $K_{\partau}^c $ are isomorphic graphs.
Define without ambiguity $ r_{\lambda (\parpi )} =  \Psi_{\gHa} ( K^c_{ \parpi} )$.
%So, we can write $ \{ \Psi_{\gHa}(K_{\parpi }^c )  | \parpi \in \mathbf{P}_n \} = \{ \Psi_{\gHa}(K_{\lambda }^c )  | \lambda \vdash n \} \, .$

%From \cref{thm:graphkernel}, the modular relations on graphs span $\boldsymbol{\Upsilon }_{\gHa}$.
From the proof of \cref{thm:graphkernel}, the hypotheses of \cref{lm:kernandimdesc} are satisfied.
Therefore, to apply \cref{lm:kernofcomp} it is enough to establish that the family $\{ r_{\lambda }  \}_{\lambda \in \mathcal{P}_n }$ is linearly independent.
Indeed, it would follow that $\ker \Psi_{\mathbf{G}} $ is generated by the modular relations and the isomorphism relations, and $\{ r_{\lambda }  \}_{\lambda \in \mathcal{P}_n }$  is a basis of $\im \Psi_{\mathbf{G}}$ concluding the proof.
%It follows that all $\Psi_{\gHa}(K_{\lambda  }^c ) $ for $\lambda \in \mathcal{P}_n $ are distinct.

%ve 

%$, and that $\{ \Psi_{\gHa}(K_{\parpi }^c )  | \parpi \in \mathbf{P}_n \}$ span the image of $\Psi_{\gHa}$.
%We will show that 
%Hence, it's enough to show that, after deleting isomorphic graphs, 

Recall that for set partitions $\parpi_1, \parpi_2$ we have that $\parpi_1 \leq \parpi_2 \Rightarrow \lambda(\parpi_1) \leq \lambda(\parpi_2) $.
The linear independence of $\{ r_{\lambda } \}_{\lambda \in \mathcal{P}_n }$ follows from the fact that its transition matrix to the monomial basis is upper triangular under the coarsening order in integer partitions.
Indeed, from \eqref{eq:fundgraphcase}, if we let $\partau$ run over set partitions and $\sigma $ run over integer partitions, we have
$$r_{\lambda (\parpi )} = \Psi_{\gHa}(K_{\parpi}^c ) = \sum_{\partau \leq \parpi} m_{\lambda (\partau )}  =  \sum_{\sigma \leq \lambda(\parpi )} a_{\parpi, \sigma }\,\, m_{\sigma} \, ,$$
where $a_{\parpi, \sigma} = \# \{\partau \vdash [n]  | \lambda(\partau ) = \sigma , \, \partau \leq \parpi  \} $.
Note that $a_{\parpi, \lambda ( \parpi )} = 1$, so $\{ r_{\lambda }  \}_{\lambda \in \mathcal{P}_n }$ is linearly independent.
\end{proof}

\begin{rem}
We have obtained in the proof of \cref{thm:graphcomukernel} that $\{ r_{\lambda}\}_{\lambda \vdash n}$ is a basis for $Sym_n$.
This basis is different from other ``chromatic bases'' proposed in \cite{willigenburg15}.
The proof gives us a recursive way to compute the coefficients $\zeta_{\lambda} $ on the span $\Psi_{\mathbf{G}} ( G ) = \sum_{\lambda } \zeta_{\lambda} r_{\lambda }$.
It is then natural to ask if combinatorial properties can be obtained for these coefficients, which are isomorphism invariants.

%$(\boldsymbol{\Upsilon }_{\mathbf{G}} (K_{\parpi }^c) )_{\parpi\vdash [n] }$ is a basis for $\mathbf{WSym}_n$.

Similarly in the non-commutative case, we obtain that $\mathbf{WSym}_n $ is spanned by $\{\boldsymbol{\Upsilon }_{\mathbf{G}} (K_{\parpi }^c) \}_{\parpi\vdash [n] }$, and so other coefficients arise.
We can again ask for combinatorial properties of these coefficients.
%The same can be asked in the next section for the hypergraphic polytopes case.
\end{rem}

\subsection{The augmented chromatic invariant}

Consider the ring of power series $\mathbb{K}[[x_1, x_2, \dots ;q_1, q_2, \dots ]] $ on two countably infinite collections of commuting variables, and let $R$ be such ring modulo the relations $ q_i(q_i - 1)^2 = 0$.

Consider the graph invariant
$\tilde{\Psi } (G) = \sum_f x_f \prod_i q_i^{c_G(f, i)}$ in $R$,
where the sum runs over \textbf{all} colorings $f$ of $G$, and $c_G(f, i)$ stands for the number of monochromatic edges of color $i$ in the coloring $f$ (i.e. edges $\{v_1, v_2\}$ such that $f(v_1 ) = f(v_2) = i$).

For instance, if $G = K_2$, then $\tilde{\Psi}(G) = 2\sum_{1 \leq i < j} x_i x_j + \sum_{1 \leq i} x_i^2 q_i$.
If we consider $G= K_3$ then we have 
$$\tilde{\Psi} (G)= 6\sum_{1 \leq i < j < k}x_i x_j x_k  + 3 \sum_{i \neq j} x_i x_j^2 q_j + \sum_{1 \leq i } x_i^3 q_i^3\, .$$
Note that we can simplify further with the relation $q_i^3 = 2 q_i^2 - q_i $.

A main property of this graph invariant is that it can be specialized to the chromatic symmetric function, by evaluating each variable $q_i$ to zero.
Another property of this graph invariant is the following:
\begin{prop}\label{prop:goodinv}
We have that $\ker \tilde{\Psi} = \ker \Psi_{\mathbf{G}}$.
In particular, for graphs $G_1, G_2$ we have $\Psi_{\mathbf{G}} (G_1) = \Psi_{\mathbf{G}}(G_2 )$ if and only if $\tilde{\Psi}(G_1) = \tilde{\Psi}(G_2)$.
\end{prop}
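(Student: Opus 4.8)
The plan is to show the two inclusions $\ker \Psi_{\mathbf{G}} \subseteq \ker \tilde\Psi$ and $\ker \tilde\Psi \subseteq \ker \Psi_{\mathbf{G}}$. The second inclusion is immediate: since $\tilde\Psi$ specializes to $\Psi_{\mathbf{G}}$ by setting every $q_i = 0$, if $x \in \ker \tilde\Psi$ then applying this specialization gives $\Psi_{\mathbf{G}}(x) = 0$, so $x \in \ker \Psi_{\mathbf{G}}$. For the first inclusion I would invoke \cref{thm:graphcomukernel}: it suffices to check that every modular relation and every isomorphism relation lies in $\ker \tilde\Psi$. Isomorphism relations are trivial because $\tilde\Psi$ is manifestly invariant under relabelling vertices (the $x$-weight of a coloring and the monochromatic-edge counts $c_G(f,i)$ depend only on the isomorphism type of $(G,f)$). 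So the crux is the modular relations.

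To handle a modular relation $G - G\cup\{e_1\} - G\cup\{e_2\} + G\cup\{e_1,e_2\}$, where $\{e_1,e_2,e_3\}$ is a triangle with $e_3 \in E(G)$ and $e_1,e_2 \notin E(G)$, I would fix an arbitrary coloring $f$ of the common vertex set and track the monomial $x_f \prod_i q_i^{c(f,i)}$ contributed by $f$ in each of the four graphs. The four graphs differ only in the presence of $e_1$ and $e_2$, so the contribution of $f$ is $x_f \prod_i q_i^{c_G(f,i)}$ times a correction factor coming from whether $e_1$ (resp.\ $e_2$) is monochromatic under $f$. Writing $\delta_j = 1$ if $e_j$ is monochromatic under $f$ and $0$ otherwise, and letting $q(e_j)$ denote the variable $q_i$ with $i$ the common color of the endpoints of $e_j$ when $\delta_j = 1$, the total coefficient of $x_f$-type monomials from $f$ in the modular relation is
\begin{equation*}
x_f \prod_i q_i^{c_G(f,i)} \cdot \bigl(1 - q(e_1)^{\delta_1}\bigr)\bigl(1 - q(e_2)^{\delta_2}\bigr)
\end{equation*}
after the appropriate factoring (care is needed here: $e_1$ and $e_2$ share the vertex $v$, and $e_3$ shares its endpoints with $e_1$ and $e_2$, so when $\delta_1 = \delta_2 = 1$ all of $e_1,e_2,e_3$ are monochromatic of the \emph{same} color, call it $c$, and the correction is $(1-q_c)^2$; when only one of $\delta_1,\delta_2$ is $1$ the correction is $(1-q_c)$; when both are $0$ the correction is $0$).

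So the modular relation evaluates in $R$ to a sum over colorings $f$ of terms each divisible by $(1-q_c)^2$ when nonzero — precisely because $e_3 \in E(G)$ forces the monochromatic color of $e_3$ to already appear with multiplicity at least one in $\prod_i q_i^{c_G(f,i)}$, so in the doubly-monochromatic case we get $q_c \cdot (1-q_c)^2 = 0$ in $R$, and in the singly-monochromatic case the two surviving sign-opposite terms cancel because flipping which of $e_1, e_2$ is counted does not change the monomial (both give $x_f \prod_i q_i^{c_G(f,i)} q_c$ with opposite signs). I expect the main obstacle to be exactly this bookkeeping: correctly pairing up colorings and monochromatic-edge multisets across the four graphs so that the relation $q_i(q_i-1)^2 = 0$ in $R$ is what kills the would-be surviving terms, and making sure the shared-vertex geometry of the triangle is used (this is where $e_3 \in E(G)$ is essential — without it one of the four terms would not carry the extra factor of $q_c$). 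Once that cancellation is verified coloring-by-coloring, summing over all $f$ shows the modular relation lies in $\ker\tilde\Psi$, and combined with the isomorphism relations and \cref{thm:graphcomukernel} this gives $\ker\Psi_{\mathbf{G}} \subseteq \ker\tilde\Psi$, completing the proof.
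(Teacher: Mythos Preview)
Your approach is correct and is essentially the paper's own proof: one inclusion via the specialization $q_i = 0$, the other via \cref{thm:graphcomukernel} together with a coloring-by-coloring verification that each modular relation vanishes in $R$ through the factorization $(1-q_{c(e_1)}^{\delta_1})(1-q_{c(e_2)}^{\delta_2})\prod_{e\in E(G)} q_{c(e)}^{m(e)}$ and the identity $q_c(q_c-1)^2=0$. One cosmetic point: your singly-monochromatic case is simpler than you make it --- when exactly one $\delta_j$ equals $0$ the factor $1 - q^{0} = 0$ already annihilates the whole expression, so no separate pairing argument is needed there; the only nontrivial case is $\delta_1=\delta_2=1$, which you handle correctly.
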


Take, for instance, the celebrated tree conjecture introduced in \cite{stanley95}:

\begin{conj}[Tree conjecture on chromatic symmetric functions]\label{conj:tree}
If two trees $T_1, T_2$ are not isomorphic, then $\Psi_{\mathbf{G}}(T_1 ) \neq \Psi_{\mathbf{G}}(T_2)$.
\end{conj}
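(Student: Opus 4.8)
The plan is to reduce \cref{conj:tree} to a reconstruction statement for the augmented invariant. By \cref{prop:goodinv} we have $\ker \tilde{\Psi} = \ker \Psi_{\mathbf{G}}$, so the conjecture is equivalent to asserting that $\tilde{\Psi}$ separates non-isomorphic trees. The advantage of working with $\tilde{\Psi}$ is that it is defined as a sum over \emph{all} colorings and records, color by color, the number of monochromatic edges; this exposes considerably more local structure of $T$ than $\Psi_{\mathbf{G}}$ alone, and it is the natural place to attempt a reconstruction argument. By \cref{thm:graphcomukernel}, the conjecture is also exactly the statement that no nonzero combination of isomorphism relations among trees lies in the span of the modular relations, which is a useful dual formulation to keep in mind.

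First I would extract from $\Psi_{\mathbf{G}}(T)$ the invariants that are already classically recoverable: the number of vertices $n$, the edge count $n-1$, the fact that $T$ is connected (read off from the power-sum expansion), the full degree sequence, and the number of paths of each length. These come from the expansions of $\Psi_{\mathbf{G}}$ in the power-sum and elementary bases via Stanley's cycle-type and connected-partition formulas, and for trees of small diameter they already settle the conjecture (caterpillars, spiders, and trees of diameter at most $4$ are distinguished). One then fixes $n$ and argues by induction on the number of leaves, or equivalently on the diameter.

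Next, for the inductive step, I would use the coproduct on $\mathbf{G}$: $\Delta \Psi_{\mathbf{G}}(T)$ packages the chromatic symmetric functions of all vertex-induced subgraphs of $T$, and, because $\tilde{\Psi}$ sums over all colorings, it also satisfies a deletion-type identity that should allow one to peel a leaf $v$ off $T$. The goal of the step is to recover from $\tilde{\Psi}(T)$ both $\tilde{\Psi}(T\setminus v)$ and the attachment datum (the identity and degree of the neighbor $u$ of $v$ inside $T\setminus v$), and then conclude by the inductive hypothesis applied to the smaller tree $T\setminus v$.

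The hard part — and the reason this remains open — is precisely this attachment step. Even granting that $\tilde{\Psi}(T)$ determines the multiset $\{\tilde{\Psi}(T')\}$ as $T'$ ranges over the trees obtained by deleting a leaf, there is no known way to reassemble this information into $T$ unambiguously: the obstruction to doing so is exactly the hypothetical counterexample to \cref{conj:tree}. Equivalently, controlling the interaction between the commutative collapse $\comu$ and the isomorphism relations among trees is, by \cref{thm:graphcomukernel}, the whole content of the conjecture, and no purely structural reason forcing this control is currently visible. A genuine proof would most likely require either a new invariant refining $\tilde{\Psi}$ that is insensitive to the modular relations but sensitive to leaf-attachment, or a generating-function/weight argument that genuinely exploits the acyclicity of trees; sketching these is as far as the present method reaches.
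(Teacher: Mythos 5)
You have not proved this statement, but no proof should be expected: \cref{conj:tree} is stated in the paper as an open conjecture (Stanley's tree conjecture), and the paper offers no proof of it. What the paper does do is exactly the reduction you describe in your first paragraph: using \cref{prop:goodinv} (i.e.\ $\ker\tilde{\Psi}=\ker\Psi_{\mathbf{G}}$, itself a consequence of \cref{thm:graphcomukernel}) to show that \cref{conj:tree} is equivalent to the corresponding statement for the augmented invariant, \cref{conj:tildatree}. So your opening reduction coincides with the paper's actual contribution, and your own assessment that the leaf-attachment/reconstruction step is the genuinely open part is accurate and consistent with the paper, which explicitly presents $\tilde{\Psi}$ only as a potentially easier target for the known coefficient-reconstruction strategies, not as a route it carries out.

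Two cautions about the surrounding remarks. First, your ``dual formulation'' is stated imprecisely: by \cref{thm:graphcomukernel}, the conjecture says that for non-isomorphic trees $T_1,T_2$ the difference $T_1-T_2$ does not lie in the span of the modular relations \emph{together with} the isomorphism relations (equivalently, distinct tree isomorphism classes stay distinct modulo the modular relations in the quotient by isomorphism relations); a ``combination of isomorphism relations among trees'' is by definition already in the kernel, so the sentence as written does not capture the conjecture. Second, the claims about which tree families are already known to be distinguished (caterpillars, spiders, small diameter) are external to this paper and should be cited rather than asserted; none of them is used or needed for the equivalence with \cref{conj:tildatree}, which is all that can legitimately be extracted here.
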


Consequently, from \cref{prop:goodinv}, the tree conjecture is equivalent to the following conjecture:

\begin{conj}\label{conj:tildatree}
If two trees $T_1, T_2$ are not isomorphic, then $\tilde{\Psi}(T_1 ) \neq \tilde{\Psi}(T_2)$.
\end{conj}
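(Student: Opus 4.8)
The plan is an induction on the number of vertices that exploits the fact that $\tilde\Psi$ is built from \emph{all} colorings, and therefore admits a leaf-deletion identity that the proper-coloring invariant lacks in clean form. I first record the reduction: by \cref{prop:goodinv} the statement is equivalent to Stanley's tree conjecture (\cref{conj:tree}), so a proof of \cref{conj:tildatree} is a proof of that classical problem, and the ``seemingly more information'' in $\tilde\Psi$ is a matter of presentation, not of distinguishing power, since $\tilde\Psi$ separates exactly the pairs of trees that $\Psi_{\mathbf G}$ does. The working hypothesis is that this richer presentation makes the induction feasible.

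The structural input is the following. Write $p_1=\sum_i x_i$. If $\ell$ is a leaf of a tree $T$ attached at a vertex $v$, then splitting colorings of $T$ by the color $d$ of $\ell$ and noting that coloring $\ell$ with $d=f(v)$ creates exactly one new monochromatic edge, of color $d$, gives
\[
\tilde\Psi(T)=p_1\cdot\tilde\Psi(T-\ell)+D_v\bigl(\tilde\Psi(T-\ell)\bigr),
\]
where $D_v$ is the operator that, on the expansion of $\tilde\Psi(T-\ell)$ over colorings $f'$ of $T-\ell$, multiplies the monomial of $f'$ by $x_{f'(v)}\bigl(q_{f'(v)}-1\bigr)$; the point is that $D_v$ depends genuinely on the attachment vertex $v$, not just on the element $\tilde\Psi(T-\ell)$. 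Averaging over the leaves $\ell_1,\dots,\ell_r$ of $T$, with attachment vertices $v_1,\dots,v_r$, gives
\[
r\,\tilde\Psi(T)=p_1\sum_{j=1}^r\tilde\Psi(T-\ell_j)+\sum_{j=1}^r D_{v_j}\bigl(\tilde\Psi(T-\ell_j)\bigr),
\]
so $\tilde\Psi(T)$ packages at once the multiset $\{\tilde\Psi(T-\ell_j)\}_j$ and the ``marked'' corrections. The plan is to use the $q$-grading of $\tilde\Psi$, which records color by color which classes carry monochromatic edges, to separate the two contributions, recover the multiset $\{\tilde\Psi(T-\ell_j)\}_j$, and hence, by the inductive hypothesis, the multiset of isomorphism types of the leaf-deleted subtrees of $T$; one would then reconstruct $T$ itself by a graph-reconstruction-type argument.

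The main obstacle is the separation step, which is exactly as hard as the tree conjecture: recovering the summands $\tilde\Psi(T-\ell_j)$ from the left-hand side of the averaged identity means undoing the sum of the operators $g\mapsto p_1 g$ and $g\mapsto D_{v_j}(g)$, and the obstruction — each $D_{v_j}$ ``knows'' its attachment vertex while $\tilde\Psi(T)$ records only the sum — is the combinatorial heart of the problem, for which no general method is known; the leaf-deletion identity above does delete a vertex, but only through the vertex-dependent operator $D_v$, so it is not an intrinsic recursion on the invariant. The relation $q_i(q_i-1)^2=0$ is a second complication: it is precisely what makes $\tilde\Psi$ a finitely presented refinement with $\ker\tilde\Psi=\ker\Psi_{\mathbf G}$, but it rewrites $q_i^{\,e}$ for $e\ge 2$ as an explicit combination of $q_i$ and $q_i^2$, so that monochromatic-edge counts survive only up to this collapse; one must check that the $q_i,q_i^2$ data still suffices to locate attachment vertices, and separately that a tree is determined by the isomorphism types of its leaf-deleted subtrees. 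Settling these points is the whole problem; everything else in the plan is formal.
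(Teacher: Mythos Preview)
This statement is a \emph{conjecture} in the paper, not a theorem; the paper offers no proof of it. The paper's only content about \cref{conj:tildatree} is the remark that, by \cref{prop:goodinv}, it is equivalent to Stanley's tree conjecture (\cref{conj:tree}), which remains open. You correctly record this equivalence in your first paragraph.

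Your proposal is not a proof and you say as much: the leaf-deletion identity you write down is correct, but the separation step --- recovering the multiset $\{\tilde\Psi(T-\ell_j)\}_j$ from $\tilde\Psi(T)$ --- is, as you note, ``exactly as hard as the tree conjecture,'' and the reconstruction of $T$ from its leaf-deleted subtrees is a second unresolved step. So what you have is an honest reformulation with two explicitly named gaps, each of which is an open problem. There is nothing to compare against in the paper, and nothing here constitutes progress beyond the equivalence already stated there.
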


%It is a consequence  that, to address the tree conjecture, it suffices to establish that $\tilde{\Psi}(T_1 ) \neq \tilde{\Psi}(T_2)$.
One strategy that has been employed to show that a family of non-isomorphic trees is distinguished by their chromatic symmetric function is to construct said trees using its coefficients over several bases, see for instance \cite{orellana14}, \cite{smith15} and \cite{aliste14}.
The graph invariant $\tilde{\Psi}$ provides more coefficients to reconstruct a tree, because $\Psi $ results from $\tilde{\Psi} $ after the specialization $q_i = 0 $.
So, employing the same strategy to prove \cref{conj:tildatree} is \textit{a priori} easier than to approach \cref{conj:tree} directly.

This shows us that the kernel method can also give us some light on other graph invariants: they may look stronger than $\Psi $, but are in fact as strong as $\Psi $ if they satisfy the modular relations.

\begin{proof}[Proof of \cref{prop:goodinv}]
Note that we have $\tilde{\Psi} (G) |_{q_i =0 \, \, i=1, 2, \dots } = \Psi_{\mathbf{G}}(G)$.
This readily yields $\ker \tilde{\Psi } \subseteq \ker \Psi_{\mathbf{G} }$.
To show that $\ker \tilde{\Psi } \supseteq \ker \Psi_{\mathbf{G} }$, we need only to show that the modular relations and the isomorphism relations belong to $\ker \tilde{\Psi } $.
For the isomorphism relations, this is trivial.

Let $l = G - G\cup \{e_1 \} - G\cup \{e_2 \} + G \cup \{e_1, e_2 \}$ be a generic modular relation on graphs, i.e. $\{ e_1, e_2, e_3 \} $ are edges that form a triangle between the vertices $\{v_1, v_2, v_3 \}$, with $e_3 \in G, e_1, e_2 \not\in G$.
Say that $e_1 = \{v_2, v_3 \}$, $e_2 = \{v_3, v_1 \}$ and $e_3 = \{v_1, v_2 \}$.
The proposition is proved if we show that $\tilde{\Psi}( l  )= 0 $.

For a coloring $f$ of a graph $H$ and a monochromatic edge $e$ in $H$, define $c(e)$ the color of the vertices of $e$.
Abbreviate $\mathbb{1}[e \text { is monochromatic} ]= m(e)$.
With this, we use the abuse of notation $ q_{c(e)}^{m(e) }$ even when $e$ is not monochromatic, in which case we have $ q_{c(e)}^{m(e) } = q_{c(e)}^0 = 1$.
Then
\begin{equation}\label{eq:edgeresp}
\prod_i q_i^{c_H(f, i )} =   \prod_{e\text{ monochromatic } }   q_{c(e)} = \prod_{e\in E(H) } q_{c(e)}^{m(e) }\, . 
\end{equation}

Set
\begin{equation}\label{eq:sf}
\begin{split}
 s_f :&= \prod_iq_i^{c_G(f, i)} - \prod_iq_i^{c_{G\cup \{e_1 \}}(f, i)} - \prod_iq_i^{c_{G\cup \{e_2 \}}(f, i)} + \prod_i q_i^{c_{G \cup \{e_1, e_2 \}}(f, i)}\\
				 &= \left( 1 - q_{c(e_1)}^{m(e_1) } - q_{c(e_2)}^{m(e_2) } + q_{c(e_1)}^{m(e_1) }q_{c(e_2)}^{m(e_2) } \right) \prod_{e\in E(G) } q_{c(e)}^{m(e) } \\
				 &= \left( 1 - q_{c(e_1)}^{m(e_1) }  \right) \left( 1- q_{c(e_2)}^{m(e_2) } \right) \prod_{e\in E(G) } q_{c(e)}^{m(e) }\, ,
\end{split}
\end{equation}
and observe that $\tilde{\Psi }(l ) = \sum_f x_f s_f $.
Fix a coloring $f$.
We now show that $s_f$ is always zero.

It is easy to see that if either $e_1$ or $e_2$ are not monochromatic, then $q_{c(e_1)}^{m(e_1) } = 1$, respectively $q_{c(e_2)}^{m(e_2) }=1$, which implies that $s_f=0$.

It remains then to consider the case where $\{v_1, v_2, v_3  \} $ is monochromatic.
Without loss of generality, say it is of color $a$.

Then, $q_{c(e_1)}^{m(e_1) } = q_a = q_{c(e_2)}^{m(e_2) } $.
Further, we have that $q_{c(e_3)}^{m(e_3) }  = q_a$, so in $R$ we have
$$s_f =	(1 - q_a)^2 q_a  \prod_{e\in E(G)\setminus \{e_3\} } q_{c(e)}^{m(e) } = 0 \, . $$

So $\tilde{\Psi }(l) = \sum_f x_f s_f = 0$.

In conclusion, any modular relation and any isomorphism relation is in $\ker \tilde{\Psi }$.
From \cref{thm:graphcomukernel} we have that $ \ker \Psi_{\mathbf{G}} \subseteq \ker \tilde{\Psi} $, so we conclude the proof.
\end{proof}

\medskip

It is clear that \cref{prop:goodinv} was established in an indirect way, by studying the kernel of the maps $\tilde{\Psi } $ and $\Psi$, instead of relating the coefficients of both invariants in some basis.

In \cref{app:coef} we relate the coefficients of both invariants in \cref{cor:weakgoodiv}.
Our original goal of establishing \cref{prop:goodinv} without using \cref{thm:graphcomukernel} directly is not accomplished, which lends more strength to this kernel method.

%\todo{comment on appendix}

%With this in mind, the kernel approach helps us creating graph invariants that are as strong as the chromatic symmetric function.

\section{The CSF on hypergraphic polytopes\label{ch4}}

\subsection{Poset structures on compositions}

In this chapter we consider generalized permutahedra and hypergraphic polytopes in $\mathbb{R}^n$.
Recall that with a set composition $\opi =S_1 | \dots | S_k \in \oPi_n $ we have the associated total preorder $R_{\opi}$, and for a non-empty set $A \subseteq [n] $, we define the set $A_{\opi} = A\cap S_i$ where $i$ is as small as possible so that $A_{\opi } \neq \emptyset $.
We refer to $A_{\opi}$ as the minima of $A$ in $R_{\opi }$.

Finally, recall as well that, for a hypergraphic polytope $\mathfrak{q}$, $\mathcal{F}(\mathfrak{q}) \subseteq 2^I\setminus \{ \emptyset \}$ denotes the family of sets $J\subseteq I$ such that the coefficients in \eqref{eq:gpcoefs} satisfy $a_J > 0$.
For a set $A \subseteq 2^I\setminus \{ \emptyset \} $, we write $\mathcal{F}^{-1}(A)$ for the hypergraphic polytope $\mathfrak{q} = \sum_{J\in A} \mathfrak{s}_J $.
We write $a = \pt $ whenever $a$ is a point polytope.

For a generalized permutahedron $\mathfrak{q}$ and a real coloring $f$ with set composition type $\opi $, recall that we write $\mathfrak{q}_f = \mathfrak{q}_{\opi }$ for the corresponding face, without ambiguity, see \cref{lm:normfan}.

\begin{defin}[Basic hypergraphic polytopes and a preorder in set compositions]

For $\opi \in \oPi_n$, the corresponding \textit{basic hypergraphic polytope} is the fundamental hypergraphic polytope $\mathfrak{p}^{\opi} = \mathcal{F}^{-1} \{ A  |\#  A_{\opi} = 1 \} $.

Consider two set composition $\opi_1, \opi_2 \in \oPi_n$.
If for any non-empty $A \subseteq [n]$ we have $\# A_{\opi_1} = 1  \Rightarrow \#  A_{\opi_2} = 1$, we write $\opi_1 \preceq \opi_2 $.
Equivalently, $\opi_1 \preceq \opi_2 $ if $\mathcal{F} (\mathfrak{p}^{\opi_1}) \subseteq \mathcal{F}(\mathfrak{p}^{\opi_2})$.
With this, $\preceq $ is a preorder, called \textit{singleton commuting preorder} or \textit{SC preorder}.
This nomenclature is motivated by \cref{prop:combinterpretation}.

Additionally, we define the equivalence relation $\sim $ in $\oPi_n $ as $\opi \sim \otau $ whenever $\# A_{\opi } = 1 \Leftrightarrow \#  A_{\otau} = 1 $ for all non-empty sets $A \subseteq [n] $. 
Note that $\opi \sim \otau $ if and only if  $\mathfrak{p}^{\opi } = \mathfrak{p}^{\otau }$.
We write $[\opi ]$ for the equivalence class of $\opi $ under $\sim $, and write $\mathfrak{p}^{[\opi]} = \mathfrak{p}^{\opi } $, without ambiguity.

The preorder $\preceq $ projects to an order in $\oPi_n /_{\sim }$.
In \cref{prop:growthdimim}, we find an asymptotic formula for the number of equivalence classes of $\sim $.
\end{defin}

\begin{smpl}
We see here the preorder $\preceq $ for $n=3$, and the corresponding order in the equivalence classes of $\sim $.
The set compositions $\opi$ such that $\lambda (\opi ) = (1, 1, 1) $, are in bijection with permutations on $\{1, 2, 3\}$, we call these the \textit{permutations in} $\oPi_3$.
For a permutation $\opi$ in $\oPi_3$ and a non-empty subset $A \subseteq \{1, 2, 3\}$, we have $\#  A_{\opi} = 1 $.
Hence, permutations are maximal elements in the singleton commuting preorder, and form an equivalence class of $\sim $.
This is called the trivial equivalence class.

We also observe that if $A$ is such that $\# A_{23|1}  = 1 $, then $\{2, 3\} \not\subseteq A$ and so we have that $\# A_{1|23} = 1$.
It follows that $1|23 \succ 23|1 $.
The remaining structure of the preorder in $\oPi_3$ is in \cref{fig:oPiSCpre}, where we collapse equivalence classes into vertices and draw the corresponding poset in its Hasse diagram.

\begin{figure}[ht]\centering
\includegraphics[scale=0.7]{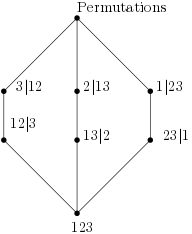}
\caption{\label{fig:oPiSCpre}The SC order in $\oPi_3/_{\sim}$.}
\end{figure}

\end{smpl}

For $n=4 $ things are more interesting, as we have non-trivial equivalence classes.
For instance, we have $[12|3|4]=\{12|4|3\} $.

\begin{prop}\label{prop:combinterpretation}
Let $\opi, \otau\in \oPi_n$.
Then we have that $\opi \preceq \otau \Rightarrow \makepar ( \opi ) \geq \makepar (  \otau )$.

Additionally, $\opi \sim \otau $ if and only if all the following happens: 
\begin{enumerate}
\item We have $\makepar ( \opi )  =\makepar (\otau ) $, and;

\item For each pair $(a, b)$ with $a, b \in [n]$ that satisfies both $a\, R_{\opi}\, b $ and $b\, R_{\otau}\, a $, either $\{a\}, \{b\}\in \makepar ( \opi )$ or $a\sim_{\makepar ( \opi )} b $.
\end{enumerate}

In particular, $\alpha (\opi ) = \alpha (\otau )$.
\end{prop}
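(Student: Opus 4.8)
The statement has three parts: (i) the implication $\opi \preceq \otau \Rightarrow \makepar(\opi) \geq \makepar(\otau)$; (ii) the characterization of when $\opi \sim \otau$; and (iii) the consequence $\alpha(\opi) = \alpha(\otau)$. I will prove them in that order, since (iii) follows immediately from (ii) once condition (1) there is established.

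For part (i), the plan is to argue contrapositively at the level of blocks. Suppose $\opi \preceq \otau$ but $\makepar(\opi) \not\geq \makepar(\otau)$; then some block $B$ of $\makepar(\otau)$ is not contained in any block of $\makepar(\opi)$, so $B$ meets at least two blocks of $\opi$. Pick $a, b \in B$ lying in distinct blocks $S_i, S_j$ of $\opi$ with $i < j$. Now take $A = \{a, b\}$: since $b \notin S_i$ we get $A_{\opi} = \{a\}$, so $\#A_{\opi} = 1$, hence by $\opi \preceq \otau$ also $\#A_{\otau} = 1$. But $a$ and $b$ lie in the same block $B$ of $\makepar(\otau)$, so they are in the same block of $\otau$, forcing $A_{\otau} = \{a, b\}$ and $\#A_{\otau} = 2$, a contradiction. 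Hence every block of $\makepar(\otau)$ is contained in a block of $\makepar(\opi)$, i.e. $\makepar(\opi) \geq \makepar(\otau)$. This also pins down the key gadget — the two-element test sets $\{a,b\}$ — which will be reused below.

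For part (ii), I will prove both implications. For the forward direction, assume $\opi \sim \otau$, i.e. $\opi \preceq \otau$ and $\otau \preceq \opi$. Applying part (i) both ways gives $\makepar(\opi) \geq \makepar(\otau)$ and $\makepar(\otau) \geq \makepar(\opi)$, so $\makepar(\opi) = \makepar(\otau)$, which is (1). For (2), take $a, b$ with $a\,R_{\opi}\,b$ and $b\,R_{\otau}\,a$, and suppose toward a contradiction that $\{a\},\{b\}$ are not both singleton blocks of $\makepar(\opi)$ and $a \not\sim_{\makepar(\opi)} b$. The relevant case is when $a$ and $b$ lie in distinct blocks of $\makepar(\opi) = \makepar(\otau)$ while at least one of them, say $a$, lies in a block of size $\geq 2$; I will test with a set $A$ consisting of $a$ together with another element $a'$ of $a$'s $\opi$-block chosen to make $\#A_{\opi}$ and $\#A_{\otau}$ disagree, exploiting that the relative $\opi$- and $\otau$-orders of $a$ and $b$ are reversed. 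Carefully choosing $A$ (possibly $A = \{a, a', b\}$ or a two-element set, depending on how the blocks are ordered) produces a set witnessing $\opi \not\sim \otau$. For the converse, assume (1) and (2) hold and show $\#A_{\opi} = 1 \Leftrightarrow \#A_{\otau} = 1$ for every nonempty $A$: by symmetry it suffices to show $\#A_{\opi}=1 \Rightarrow \#A_{\otau}=1$. If $\#A_{\opi} = 1$, say $A_{\opi} = \{a\}$, then every other element $b \in A$ satisfies $a\,R_{\opi}\,b$ with $b$ strictly later; I must show $a$ comes no later than every such $b$ in $\otau$ as well. If not, some $b \in A$ has $b\,R_{\otau}\,a$ strictly, so $a\,R_{\opi}\,b$ and $b\,R_{\otau}\,a$; condition (2) then forces $\{a\},\{b\}$ to be singleton blocks of the common set partition or $a \sim_{\makepar(\opi)} b$ — and since $a$ and $b$ are in distinct $\opi$-blocks they cannot be $\makepar(\opi)$-equivalent, so both are singletons; but a singleton-block element in a set composition with $b$ strictly before $a$ in $\otau$ means $A_{\otau}$ does not contain $a$, and I track through that $A_{\otau}$ still has size one (it equals $\{b'\}$ for the $\otau$-minimal element, and condition (1) plus the singleton structure keeps it a singleton). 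This bookkeeping is the delicate part.

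Part (iii) is then immediate: $\alpha(\opi)$ is obtained from $\makepar(\opi)$ by recording block sizes in the order dictated by the set composition, but it actually only depends on the multiset of block sizes up to — wait, more carefully, I should note that $\alpha(\opi) = \alpha(\otau)$ follows because condition (2) guarantees that whenever the $\opi$-order and $\otau$-order of two elements disagree, those elements lie in a common block or are both singletons, so the induced linear order on the blocks of the common partition $\makepar(\opi) = \makepar(\otau)$ is the same for $\opi$ and $\otau$; hence the sequence of block sizes read off is identical, giving $\alpha(\opi) = \alpha(\otau)$.

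\textbf{Main obstacle.} The routine part is (i) and the direction "(1)+(2) $\Rightarrow$ $\opi\sim\otau$ is easy"; the genuinely fiddly part is the forward direction of (ii), constructing, from a failure of condition (2), an explicit test set $A$ that distinguishes $\opi$ and $\otau$. One must handle separately the subcases according to whether $a$ or $b$ (or both) sit in a non-singleton block and according to the relative order of their blocks in $\opi$ versus $\otau$, and check in each that $\#A_{\opi} \neq \#A_{\otau}$. I expect this case analysis, rather than any deep idea, to be where the real work lies.
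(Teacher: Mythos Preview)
Your proposal is correct and follows essentially the same route as the paper: two-element test sets for part (i), a three-element test set $\{a,a',b\}$ (the paper writes $\{a,b,c\}$ with $c\sim_{\makepar(\opi)}a$) for the forward direction of (ii), and a direct verification for the reverse direction.

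Two places where you make life harder than necessary. First, the forward direction of (ii) needs no case analysis: once $a\not\sim_{\makepar(\opi)}b$ and (say) $a$ has a block-mate $c$, the single set $A=\{a,b,c\}$ gives $A_{\opi}=\{a,c\}$ and $A_{\otau}=\{b\}$ in one stroke; the ``say $a$'' is genuinely without loss of generality by the $(\opi,a)\leftrightarrow(\otau,b)$ symmetry, so the subcases you anticipate never arise. Second, in the reverse direction your choice of $b$ is suboptimal and is exactly what creates the ``delicate bookkeeping'' you flag. Pick instead $b\in A_{\otau}$ from the start: then $a\,R_{\opi}\,b$ and $b\,R_{\otau}\,a$ hold automatically, condition (1) forces $A_{\opi}$ and $A_{\otau}$ to be either equal or disjoint (they are intersections of $A$ with blocks of the common partition), hence disjoint, hence $a\not\sim_{\makepar(\opi)}b$; condition (2) then makes $\{b\}$ a singleton block, so $A_{\otau}\subseteq\{b\}$ and the contradiction is immediate. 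This is precisely the paper's argument, and it eliminates the tracking of a separate $b'$ entirely.
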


Property 2. will be called the SC property.
The equivalence classes of $\sim $ have a clear combinatorial description via \cref{prop:combinterpretation}.
In particular, we see that $\opi_1 \sim \opi_2$ if all blocks are the same and in the same order, with possible exceptions between singletons.
For instance, we have that $12|3|4|5|67 \sim 12|3|5|4|67$ but $12|3|4|5|67 \not\sim 3|12|4|5|67$.

We are also told in \cref{prop:combinterpretation} that the map $\makepar: \oPi_n \to \mathbf{P}_n $ flips the SC preorder with respect to the coarsening order $\leq $ in $\mathbf{P}_n$.

\begin{proof}[Proof of \cref{prop:combinterpretation}]
Write $\parpi, \partau $ for the underlying set partitions $\makepar (\opi ) , \makepar (\otau )$, respectively.
Suppose that $\opi \preceq \otau$ and take $i, j$ elements of $[n]$ such that $i \sim_{\partau } j $.
Then $\{ i, j\} \not \in  \mathcal{F} ( \mathfrak{p}^{\otau} ) \supseteq \mathcal{F} ( \mathfrak{p}^{\opi} )$.
This implies that $\# \{i, j\}_{\opi } \neq 1 $, hence $ i \sim_{\parpi } j $.
Since $i, j$ are generic, we have that $\parpi  \geq \partau $.
This concludes the first part.

For the second part, we will first show the direct implication.
Suppose that $ \mathfrak{p}^{\opi} = \mathfrak{p}^{\otau} $.
It follows from above that $\parpi = \partau$.
Our goal is to establish the SC property.

Take $a, b$ that are in distinct blocks in $\parpi $, such that both $a\,  R_{\opi }\, b $ and $b\, R_{\otau }\, a $.
For sake of contradiction let $c \neq a$ be such that $c\sim_{\parpi } a $.
Then $\{a, b, c \}_{\opi} = \{a, c \} $, which is not a singleton.
However, we have that $\{a, b, c \}_{\otau} = \{ b \}$ is a singleton, which is a contradiction with $\opi \sim \otau $.
This contradicts the assumption that $a \sim_{\parpi } c$, so we conclude that $\{ a \} \in \parpi $.
Similarly we obtain that $\{ b \} \in \parpi $.
This shows the SC property.

For the reverse implication, suppose that $\opi, \otau $ are such that $\parpi =  \partau $ and satisfy the SC property.
Our goal is to show that $\opi \sim \otau $.
For sake of contradiction, take some nonempty set $A \subseteq [n] $ such that $A_{\opi } = \{ a \} $ is a singleton, but $\#  A_{\otau } \neq 1$.
Finally, take an element $b \in A_{\otau }$, so that $a, b \in A$.
We immediately have $a \, R_{\opi }\, b$, $b\, R_{\otau }\, a$.

Since $\parpi = \partau $, either $A_{\opi } = A_{\otau } $ or $A_{\opi }\cap A_{\otau } =\emptyset$.
Since $A_{\opi } \neq A_{\otau } $, they are disjoint and in particular $a \not \sim_{\parpi } b$.
By the SC property we conclude that both $ \{ a \}, \{ b \} \in \parpi $, contradicting that $\# A_{\otau }\neq 1 $.
That $\# A_{\otau } = 1 \Rightarrow \# A_{\opi } = 1$ follows similarly, concluding the proof.

Finally, whenever $\opi \sim \otau $, the SC property gives us $\alpha(\opi ) = \alpha (\otau )$.
\end{proof}

The following definition focus on the algebraic counterpart of $\sim $.

\begin{defin}\label{def:SCspace}
Consider the quasisymmetric functions $\mathbf{N}_{[\opi ] } = \sum_{\otau \in [\opi] } \mathbf{M}_{\otau} $, which are linearly independent.
The \textit{singleton commuting space}, or \textbf{SC} for short, is the graded vector subspace of \textbf{WQSym} spanned by $\biguplus_{n\geq 0} \{ \mathbf{N}_{[\opi ] } \, : \, [\opi ] \in \oPi_n / \sim \} $ .

\end{defin}

In \cref{lm:liness}, we show that \textbf{SC} is the image of $\uhsm_{\mathbf{HGP}}$.
As a consequence, \textbf{SC} is a Hopf algebra.

We turn to some properties of hypergraphic polytopes in the next lemma:

\begin{lm}[Vertices of a hypergraphic polytope]\label{lm:vcc}
Let $\mathfrak{q}$ be a hypergraphic polytope and let $f$ be a real coloring.

Then, we have that $\mathfrak{q}_f = \pt$ if and only if $\#  A_{\opi(f)} = 1 $ for each $ A \in \mathcal{F} (q )$.
In particular, if $\opi(f_1) \sim \opi(f_2)$ then $\mathfrak{q}_{f_1} = \pt \Leftrightarrow \mathfrak{q}_{f_2} = \pt$.
\end{lm}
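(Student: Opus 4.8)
The plan is to reduce the statement to a direct computation using the explicit Minkowski-sum description of a hypergraphic polytope $\mathfrak{q} = \sum_{J \in \mathcal{F}(\mathfrak{q})} a_J \mathfrak{s}_J$ together with \cref{lm:minkfaces} and the formula \eqref{eq:facesimpl} for faces of simplices. First I would observe that, by \cref{lm:minkfaces} applied repeatedly to the Minkowski sum, $\mathfrak{q}_f = \sum_{J \in \mathcal{F}(\mathfrak{q})} a_J (\mathfrak{s}_J)_f$, and by \eqref{eq:facesimpl} each summand equals $a_J \mathfrak{s}_{J_{\opi(f)}}$. So $\mathfrak{q}_f$ is a Minkowski sum of (positively dilated) simplices $\mathfrak{s}_{J_{\opi(f)}}$.

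The key step is then the elementary fact that a Minkowski sum of polytopes is a point if and only if every summand is a point, and that a dilated simplex $a_J \mathfrak{s}_{K}$ with $a_J > 0$ is a point if and only if $\# K = 1$. Combining these, $\mathfrak{q}_f = \pt$ if and only if $\# J_{\opi(f)} = 1$ for every $J \in \mathcal{F}(\mathfrak{q})$, which is exactly the claimed equivalence. The "only if" direction of the summand fact uses that dimensions add under Minkowski sums (or, more elementarily, that if some summand $P_0$ is not a point then it contains two distinct points $p \neq p'$, and adding a fixed point from each other summand produces two distinct points of the sum); the "if" direction is trivial since a sum of points is a point.

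For the final sentence of the lemma, I would simply note that the condition "$\# A_{\opi(f)} = 1$ for each $A \in \mathcal{F}(\mathfrak{q})$" depends on $f$ only through $\opi(f)$, and that by the definition of the equivalence relation $\sim$ on $\oPi_n$, if $\opi(f_1) \sim \opi(f_2)$ then $\# A_{\opi(f_1)} = 1 \Leftrightarrow \# A_{\opi(f_2)} = 1$ for all non-empty $A$; in particular this holds for all $A \in \mathcal{F}(\mathfrak{q})$, so $\mathfrak{q}_{f_1} = \pt \Leftrightarrow \mathfrak{q}_{f_2} = \pt$ by the first part.

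I do not expect a serious obstacle here; the only point requiring a little care is making the "Minkowski sum is a point iff each summand is" argument clean in the presence of the positive dilation factors $a_J$, but since $a_J > 0$ for $J \in \mathcal{F}(\mathfrak{q})$ this is immediate. The main thing to get right is to invoke \cref{lm:minkfaces} and \eqref{eq:facesimpl} in the correct form and to be explicit that $\mathcal{F}(\mathfrak{q})$ is precisely the index set over which the sum of simplices runs.
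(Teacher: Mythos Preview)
Your proposal is correct and follows essentially the same route as the paper: write $\mathfrak{q}$ as a Minkowski sum of dilated simplices, apply \cref{lm:minkfaces} and \eqref{eq:facesimpl} to get $\mathfrak{q}_f = \sum_{A\in\mathcal{F}(\mathfrak{q})} a_A \mathfrak{s}_{A_{\opi(f)}}$, observe that a Minkowski sum of polytopes is a point iff each summand is, and then note that the resulting condition depends only on the $\sim$-class of $\opi(f)$. The paper's proof is terser (it simply asserts the ``sum is a point iff each summand is'' step), but the argument is the same.
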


\begin{proof}
Write $\mathfrak{q} = \sum_{A \in \mathcal{F}(\mathfrak{q} ) } a_A \mathfrak{s}_{  A } $, for coefficients $a_A\geq 0 $.
Computing the face corresponding to $f$ on both sides, we obtain that $\mathfrak{q}_f = \pt $ if and only if 
$$ \sum_{A \in \mathcal{F}(\mathfrak{q} )  } a_A (\mathfrak{s}_A )_f = \pt \, ,$$
or equivalently, if $(\mathfrak{s}_A )_f = \pt $ for each $A \in \mathcal{F}(\mathfrak{q} ) $.

We observed in \cref{eq:facesimpl} that $(\mathfrak{s}_A)_f =\mathfrak{s}_{ A_{\opi ( f)} }$.
Hence, we conclude that $\mathfrak{q}_f = \pt $ if and only if $\#  A_{\opi ( f)} = 1 $ for each $A \in \mathcal{F}(\mathfrak{q} ) $, as desired.

To show the last part of the lemma, just observe that $\#  A_{\opi} = 1 $ only depends on the equivalence class of $\opi$.
\end{proof}

The following corollary is immediate from \eqref{eq:uhsmiswqsym} and \cref{lm:vcc}.

\begin{cor}\label{cor:imSC}
The image of $\uhsm_{\mathbf{HGP}}$ is contained in the \textbf{SC} space, i.e. for any hypergraphic polytope $\mathfrak{q}$ we have that
$$ \boldsymbol{\Upsilon }_{\mathbf{HGP} }(\mathfrak{q} ) \in \mathbf{SC}\, . $$
\end{cor}

Another consequence of \cref{lm:vcc} is that we have $\mathfrak{p}^{\opi}_{\otau} = \pt $ precisely when $ \# A_{\opi } = 1 \Rightarrow \# A_{\otau} = 1 $, i.e. when $\opi \preceq \otau $.
It follow from \eqref{eq:uhsmiswqsym} that:

\begin{equation}\label{eq:fundnestocase}
\boldsymbol{\Upsilon }_{\gpHa } (\mathfrak{p}^{\opi } ) = \sum_{\opi \preceq \otau} \mathbf{M}_{\otau } \, .
\end{equation}

As presented, \eqref{eq:fundnestocase} seems to shows that the transition matrix of $\{ \boldsymbol{\Upsilon }_{\gpHa } (\mathfrak{p}^{\opi } ) \}_{\opi \in \oPi_n } $ over the monomial basis is upper triangular.
Since $\preceq $ is not an order but a preorder, that is not the case.
The related result that we can establish is the following:

\begin{lm}\label{lm:liness}
\label{lm:spanning}
The family $\{ \boldsymbol{\Upsilon }_{\mathbf{HGP }} (\mathfrak{p}^{[\opi]}) \}_{ [\opi] \in \oPi_n/ \sim }$ forms a basis of \textbf{SC}.
In particular, we have $ \im \boldsymbol{\Upsilon }_{\mathbf{HGP} } =  \mathbf{SC}$.
\end{lm}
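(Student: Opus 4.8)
**Proof proposal for Lemma (that $\{\boldsymbol{\Upsilon}_{\mathbf{HGP}}(\mathfrak{p}^{[\opi]})\}_{[\opi]\in\oPi_n/\sim}$ is a basis of $\mathbf{SC}$).**

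The plan is to establish two things: that the family $\{\boldsymbol{\Upsilon}_{\mathbf{HGP}}(\mathfrak{p}^{[\opi]})\}$ spans $\mathbf{SC}$, and that it is linearly independent, hence a basis; the equality $\im\boldsymbol{\Upsilon}_{\mathbf{HGP}} = \mathbf{SC}$ then follows together with \cref{cor:imSC}, since the basic hypergraphic polytopes $\mathfrak{p}^{\opi}$ lie in $\mathbf{HGP}$ and their images already exhaust $\mathbf{SC}$. The key computational input is \eqref{eq:fundnestocase}, which says $\boldsymbol{\Upsilon}_{\gpHa}(\mathfrak{p}^{\opi}) = \sum_{\opi\preceq\otau}\mathbf{M}_{\otau}$. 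The first observation is that the right-hand side is constant on $\sim$-classes: if $\opi\sim\opi'$ then $\opi\preceq\otau \iff \opi'\preceq\otau$ (both are equivalent to $\mathcal{F}(\mathfrak{p}^{\opi})\subseteq\mathcal{F}(\mathfrak{p}^{\otau})$, and $\mathfrak{p}^{\opi}=\mathfrak{p}^{\opi'}$), so $\boldsymbol{\Upsilon}_{\mathbf{HGP}}(\mathfrak{p}^{[\opi]})$ is well defined. Moreover, since $\opi\preceq\otau$ depends only on $[\otau]$ as well, we can group the sum by $\sim$-classes and rewrite
\begin{equation}\label{eq:Ntriangle}
\boldsymbol{\Upsilon}_{\mathbf{HGP}}(\mathfrak{p}^{[\opi]}) = \sum_{[\opi]\preceq[\otau]}\mathbf{N}_{[\otau]}\,,
\end{equation}
where $\preceq$ now denotes the induced partial order on $\oPi_n/\sim$ and $\mathbf{N}_{[\otau]} = \sum_{\otau'\in[\otau]}\mathbf{M}_{\otau'}$ as in \cref{def:SCspace}.

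Now the argument is pure finite poset/triangularity bookkeeping. The partial order $\preceq$ on the finite set $\oPi_n/\sim$ is antisymmetric (this is exactly why we passed to the quotient), so we may choose a linear extension of it. With respect to this linear order, \eqref{eq:Ntriangle} exhibits the transition matrix from $\{\boldsymbol{\Upsilon}_{\mathbf{HGP}}(\mathfrak{p}^{[\opi]})\}$ to the basis $\{\mathbf{N}_{[\otau]}\}$ of $\mathbf{SC}$ as unitriangular: the coefficient of $\mathbf{N}_{[\opi]}$ in $\boldsymbol{\Upsilon}_{\mathbf{HGP}}(\mathfrak{p}^{[\opi]})$ is $1$ (since $[\opi]\preceq[\opi]$), and all other nonzero coefficients occur at classes $[\otau]\succ[\opi]$, which come later in the chosen linear extension. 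A unitriangular change of basis is invertible, so $\{\boldsymbol{\Upsilon}_{\mathbf{HGP}}(\mathfrak{p}^{[\opi]})\}_{[\opi]\in\oPi_n/\sim}$ is again a basis of $\mathbf{SC}$. This proves the first sentence of the lemma.

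For the second sentence, $\im\boldsymbol{\Upsilon}_{\mathbf{HGP}}\supseteq\mathbf{SC}$ because every $\mathfrak{p}^{[\opi]}$ is (represented by) an element of $\mathbf{HGP}$ and its image spans $\mathbf{SC}$ by what we just proved; the reverse inclusion $\im\boldsymbol{\Upsilon}_{\mathbf{HGP}}\subseteq\mathbf{SC}$ is precisely \cref{cor:imSC}. Hence $\im\boldsymbol{\Upsilon}_{\mathbf{HGP}} = \mathbf{SC}$. I expect the only genuinely delicate point to be the justification of \eqref{eq:Ntriangle} — i.e. checking carefully that $\opi\preceq\otau$ descends to a well-defined relation on $\sim$-classes that is not merely a preorder but an honest partial order, so that a linear extension exists and the triangularity argument goes through; this is where \cref{prop:combinterpretation} and the definition of $\sim$ are doing the real work. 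Everything after that is the standard unitriangularity principle.
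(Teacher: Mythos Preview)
Your proof is correct and follows essentially the same approach as the paper: rewrite \eqref{eq:fundnestocase} by grouping into $\sim$-classes to obtain the unitriangular relation $\boldsymbol{\Upsilon}_{\mathbf{HGP}}(\mathfrak{p}^{[\opi]}) = \mathbf{N}_{[\opi]} + \sum_{[\opi]\prec[\otau]}\mathbf{N}_{[\otau]}$, conclude that the family is a basis of $\mathbf{SC}$, and then invoke \cref{cor:imSC} for the reverse inclusion. Your write-up is in fact more explicit than the paper's about why $\preceq$ descends to a genuine partial order on $\oPi_n/\sim$ and why unitriangularity yields a basis, but the argument is the same.
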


\begin{proof}
From \eqref{eq:fundnestocase} we have the following triangularity relation:
\begin{equation}\label{eq:blocktrignesto}
\mathbf{\Psi } (\mathfrak{p}^{[\opi ]} ) = \sum_{\opi \sim \otau }M_{\otau} + \sum_{\substack{\opi \prec \otau \\ \opi \not\sim \otau }}M_{\otau } = N_{[\opi ]} + \sum_{[\opi ] \prec [\otau ]} N_{[\otau] } \, ,
\end{equation}
where we take the projecton of the preorder $\preceq $ into the corresponding order in $\oPi_n / \sim $.

Thus, $\{ \boldsymbol{\Upsilon }_{\mathbf{HGP }} (\mathfrak{p}^{[\opi]}) \}_{ [\opi] \in \oPi_n/ \sim }$ is another basis of \textbf{SC}.
From \cref{cor:imSC}, we conclude that $\im \boldsymbol{\Upsilon }_{\mathbf{HGP}} = \mathbf{SC}$.
\end{proof}

In the commutative case, we wish to carry the triangularity of the monomial transition matrix in \eqref{eq:blocktrignesto} into a new smaller basis in $QSym$.

For that, we project the order $\preceq $ into an order $\leq '$ in $\mathcal{C}_n$ as follows: we say that $\alpha \leq' \beta $ if we can find set compositions $\opi, \otau $ that satisfy $\opi \preceq \otau $, $\alpha (\opi ) = \alpha$ and $\alpha (\otau ) = \beta$.
We will see that this projection is akin to the projection of the coarsening order of set partitions to the coarsening order on partitions.
In particular, it preserves the desired upper triangularity.

\begin{lm}\label{lm:ordercomps}
The relation $\leq' $ on $\mathcal{C}_n $ is an order and satisfies $\opi \preceq \otau \Rightarrow \alpha ( \opi ) \leq' \alpha (\otau ) $.
\end{lm}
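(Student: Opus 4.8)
The plan is to establish the two claims separately, starting with the implication $\opi \preceq \otau \Rightarrow \alpha(\opi) \leq' \alpha(\otau)$, which is essentially immediate from the definition of $\leq'$: given $\opi \preceq \otau$, the pair $(\opi, \otau)$ itself witnesses $\alpha(\opi) \leq' \alpha(\otau)$. The content of the lemma is therefore that $\leq'$ is a genuine partial order on $\mathcal{C}_n$, i.e.\ reflexive, antisymmetric, and transitive. Reflexivity is clear since $\opi \preceq \opi$ for any $\opi$. The two properties that require work are transitivity and antisymmetry.

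For transitivity, suppose $\alpha \leq' \beta$ and $\beta \leq' \gamma$. Then there are set compositions $\opi_1 \preceq \otau_1$ with $\alpha(\opi_1) = \alpha$, $\alpha(\otau_1) = \beta$, and $\opi_2 \preceq \otau_2$ with $\alpha(\opi_2) = \beta$, $\alpha(\otau_2) = \gamma$. The obstacle is that $\otau_1$ and $\opi_2$ need not be equal — they merely have the same underlying integer composition $\beta$. The key step will be to show we can ``realign'' the witnesses: I would argue that $\preceq$ only depends on data that can be transported along a relabelling of $[n]$, so after applying a suitable permutation $\sigma$ of $[n]$ carrying $\opi_2$ to $\otau_1$ (possible since $\alpha(\opi_2) = \alpha(\otau_1) = \beta$ means they have the same block sizes in the same order), we get $\otau_1 = \sigma(\opi_2) \preceq \sigma(\otau_2)$, and then $\opi_1 \preceq \otau_1 = \sigma(\opi_2) \preceq \sigma(\otau_2)$ gives $\opi_1 \preceq \sigma(\otau_2)$ by transitivity of $\preceq$, with $\alpha(\opi_1) = \alpha$ and $\alpha(\sigma(\otau_2)) = \gamma$; hence $\alpha \leq' \gamma$. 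I expect this permutation-equivariance of $\preceq$ to be the main thing to nail down carefully — it follows from the definition via $\#A_{\opi} = 1$ since $A_{\sigma(\opi)} = \sigma(A_{\opi})$, so $\#A_{\sigma(\opi)} = 1 \iff \#(\sigma^{-1}A)_{\opi} = 1$, and as $A$ ranges over all nonempty subsets so does $\sigma^{-1}A$.

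For antisymmetry, suppose $\alpha \leq' \beta$ and $\beta \leq' \alpha$ with $\alpha \neq \beta$. Using \cref{prop:combinterpretation}, whenever $\opi \preceq \otau$ we have $\makepar(\opi) \geq \makepar(\otau)$ in the coarsening order on set partitions, and consequently the underlying integer partitions satisfy $\lambda(\makepar(\opi)) \geq \lambda(\makepar(\otau))$, i.e.\ $\lambda(\alpha(\opi)) \geq \lambda(\alpha(\otau))$ in the dominance-type coarsening order on integer partitions. So $\alpha \leq' \beta$ forces $\lambda(\alpha) \geq \lambda(\beta)$ and $\beta \leq' \alpha$ forces $\lambda(\beta) \geq \lambda(\alpha)$, whence $\lambda(\alpha) = \lambda(\beta)$; moreover $\makepar(\opi) \geq \makepar(\otau)$ together with equal underlying partitions forces $\makepar(\opi) = \makepar(\otau)$, i.e.\ the witnessing set compositions for $\alpha \leq' \beta$ actually have the same underlying set partition. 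Then the second clause of \cref{prop:combinterpretation} applies and tells us the witnesses are $\sim$-equivalent, so in particular (by the ``In particular'' clause of that proposition) $\alpha(\opi) = \alpha(\otau)$, giving $\alpha = \beta$, a contradiction. This closes the argument; the only delicate point beyond bookkeeping is checking that $\makepar(\opi) \geq \makepar(\otau)$ plus $\lambda(\makepar(\opi)) = \lambda(\makepar(\otau))$ indeed implies $\makepar(\opi) = \makepar(\otau)$, which is a standard fact about the coarsening order (a strict coarsening strictly decreases the number of blocks, hence strictly changes the partition).
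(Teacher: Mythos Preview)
Your transitivity argument is correct and essentially identical to the paper's: both use a permutation to realign the middle witnesses and invoke the permutation-equivariance of $\preceq$.

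Your antisymmetry argument, however, has a genuine gap. From the witnesses $\opi \preceq \otau$ of $\alpha \leq' \beta$ you correctly deduce $\makepar(\opi) = \makepar(\otau)$. But you then assert that \cref{prop:combinterpretation} gives $\opi \sim \otau$. It does not: that proposition characterises $\opi \sim \otau$ by the conjunction of \emph{two} conditions, namely $\makepar(\opi) = \makepar(\otau)$ \emph{and} the SC property. You have only verified the first. A concrete counterexample: take $\opi = 12|3$ and $\otau = 3|12$. Then $\opi \preceq \otau$ (one checks $\mathcal{F}(\mathfrak{p}^{\opi}) = \{\{1\},\{2\},\{3\},\{1,3\},\{2,3\}\} \subseteq \{\{1\},\{2\},\{3\},\{1,3\},\{2,3\},\{1,2,3\}\} = \mathcal{F}(\mathfrak{p}^{\otau})$) and $\makepar(\opi) = \makepar(\otau) = \{\{1,2\},\{3\}\}$, yet $\opi \not\sim \otau$ and indeed $\alpha(\opi) = (2,1) \neq (1,2) = \alpha(\otau)$. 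So knowing only $\opi \preceq \otau$ and $\makepar(\opi) = \makepar(\otau)$ is not enough to conclude $\alpha(\opi) = \alpha(\otau)$; your argument as written would only yield $\lambda(\alpha) = \lambda(\beta)$.

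The paper's antisymmetry argument instead uses a cardinality trick on the sets $\mathcal{F}(\mathfrak{p}^{\opi})$. Given witnesses $\opi_1 \preceq \otau_2$ for $\alpha \leq' \beta$ and $\otau_1 \preceq \opi_2$ for $\beta \leq' \alpha$ (with $\alpha(\opi_i) = \alpha$, $\alpha(\otau_i) = \beta$), a permutation carrying $\opi_1$ to $\opi_2$ shows $|\mathcal{F}(\mathfrak{p}^{\opi_1})| = |\mathcal{F}(\mathfrak{p}^{\opi_2})|$, and likewise for the $\otau$'s. Chaining the two inclusions with these cardinality equalities forces $\mathcal{F}(\mathfrak{p}^{\opi_1}) = \mathcal{F}(\mathfrak{p}^{\otau_2})$, i.e.\ $\opi_1 \sim \otau_2$, and then the ``in particular'' clause of \cref{prop:combinterpretation} gives $\alpha = \alpha(\opi_1) = \alpha(\otau_2) = \beta$. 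The point is that this argument genuinely uses \emph{both} directions $\alpha \leq' \beta$ and $\beta \leq' \alpha$ to squeeze the cardinalities, whereas your approach tries to extract $\opi \sim \otau$ from a single witnessing pair, which cannot work.
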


Recall that permutations of $[n]$ act on set compositions: if $\opi = A_1| \dots |A_k \in \oPi_n$, and $\phi\in S_n $, then $\phi(\opi ) \in \oPi_n$ is the set composition $\phi(S_1) | \dots | \phi(S_k)$.

\begin{proof}[Proof of \cref{lm:ordercomps}]
We only need to check that $\leq'$ as defined is indeed an order, as it is straightforward that $\opi \preceq \otau \Rightarrow \alpha (\opi ) \leq' \alpha ( \otau )$.

Reflexivity of $\leq' $ trivially follows from the definition of $\preceq $.
To show antisymmetry of $\leq '$, it is enough to establish that if $\opi_1 \preceq \otau_2$, $\otau_1 \preceq \opi_2 $ are set compositions such that $\alpha := \alpha (\opi_1) = \alpha (\opi_2)$ and $\beta := \alpha (\otau_1) = \alpha (\otau_2)$, then $\alpha = \beta$.

Indeed, if $\alpha (\opi_1) = \alpha (\opi_2)$ then there is a permutation $\phi$ in $[n]$ that satisfies $\phi ( \opi_1 ) = \opi_2 $.
Then, $\phi$ lifts to a bijection between $ \mathcal{F} (\mathfrak{p}^{\opi_1 } ) $ and $ \mathcal{F} (\mathfrak{p}^{\opi_2 } )$; in particular, they have the same cardinality.
Similarly, $ \mathcal{F} (\mathfrak{p}^{\otau_1 } ) $ and $ \mathcal{F} (\mathfrak{p}^{\otau_2 } )$ have the same cardinality.

But since $\opi_1 \preceq \otau_2$, $\otau_1 \preceq \opi_2 $, i.e. $ \mathcal{F} (\mathfrak{p}^{\opi_1 } ) \subseteq \mathcal{F} (\mathfrak{p}^{\otau_2 } )$ and $\mathcal{F} (\mathfrak{p}^{\otau_1 } ) \subseteq \mathcal{F} (\mathfrak{p}^{\opi_2 } )$, it follows that $\mathcal{F} (\mathfrak{p}^{\opi_1 } ) = \mathcal{F} (\mathfrak{p}^{\otau_1 } )$, and so $\opi_1 \sim \otau_1$.
From \cref{prop:combinterpretation}, we have that $\alpha = \alpha( \opi_1 ) = \alpha(\otau_1 ) = \beta$, and the antisymmetry follows.

To show transitivity, take compositions such that $\alpha \leq' \beta$ and $\beta  \leq' \sigma $, i.e. there are set compositions $\opi \preceq \otau_1$ and $\otau_2 \preceq \ogamma$ such that $\alpha (\opi ) = \alpha$, $\alpha (\otau_1 ) = \alpha (\otau_2) = \beta$ and $\alpha(\ogamma ) = \sigma$.
Take a permutation $\phi $ in $[n]$ such that $\phi(\otau_1 ) = \otau_2$ and call $\odelta = \phi(\opi )$, note that $\alpha (\odelta ) = \alpha (\opi )= \alpha $.

We claim that $\odelta \preceq \otau_2$. It follows that $\odelta \preceq \ogamma$ and $\alpha \leq' \sigma $, so the transitivity of $\leq'$ also follows.
Take $A \subseteq [n]$ nonempty such that $\#  A_{\odelta} =  1$.
Then $\#  \phi^{-1}(A)_{\opi} = 1$ and from $\opi \leq \otau_1 $ it follows that $\# \phi^{-1}(A)_{\otau_1} = 1$.
From $\otau_2 = \phi(\otau_1) $ we have that $\#  A_{\otau_2 } = 1$.
Since $A$ is generic such that $\#  A_{\odelta} = 1 $, we conclude that $\odelta \preceq \otau_2$, as envisaged.
\end{proof}

\subsection{The kernel and image problem on hypergraphic polytopes}

Recall that a fundamental hypergraphic polytope in $\mathbb{R}^{[n]}$ is a polytope of the form $\sum_{\emptyset \neq J \subseteq [n] } a_J\mathfrak{s}_J$ where each $a_J \in \{0, 1\}$.
In particular, a fundamental hypergraphic polytope can be written as $\mathfrak{q} =\mathcal{F}^{-1} (\mathcal{A})$ for some family of non-empty subsets of $[n]$.

In the following proposition, we reduce the problem of describing the kernel of $\uhsm_{\mathbf{HGP}}$ to the subspace of $\mathbf{HGP}$ spanned by the fundamental hypergraphic polytopes.

\begin{prop}[Simple relations for $\boldsymbol{\Upsilon }_{\mathbf{HGP}} $]\label{prop:nestosmplmodrel}
If $\mathfrak{q}_1, \mathfrak{q}_2$ are two hypergraphic polytopes such that $\mathcal{F}(\mathfrak{q}_1 ) = \mathcal{F}(\mathfrak{q}_2) $, then 
$$\uhsm_{\mathbf{HGP}}(\mathfrak{q}_1) = \uhsm_{\mathbf{HGP}}(\mathfrak{q}_2 )\, .$$
\end{prop}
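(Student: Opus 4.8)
The plan is to reduce everything to the combinatorial description of $\boldsymbol{\Upsilon}_{\mathbf{HGP}}(\mathfrak{q})$ provided by \cref{prop:psiisquasisym} together with the characterization of vertex-producing colorings in \cref{lm:vcc}. Indeed, by \eqref{eq:uhsmiswqsym} we have $\boldsymbol{\Upsilon}_{\mathbf{HGP}}(\mathfrak{q}) = \sum_{\mathfrak{q}_{\opi} = \pt} \mathbf{M}_{\opi}$, so it suffices to show that the set of set compositions $\opi$ with $\mathfrak{q}_{\opi} = \pt$ depends only on $\mathcal{F}(\mathfrak{q})$. But this is precisely the content of \cref{lm:vcc}: $\mathfrak{q}_f = \pt$ if and only if $\#A_{\opi(f)} = 1$ for every $A \in \mathcal{F}(\mathfrak{q})$, a condition manifestly depending only on $\opi(f)$ and on the family $\mathcal{F}(\mathfrak{q})$, not on the particular positive coefficients $a_J$.

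First I would recall that for a hypergraphic polytope $\mathfrak{q} = \sum_{A \in \mathcal{F}(\mathfrak{q})} a_A \mathfrak{s}_A$ with all $a_A > 0$, and any real coloring $f$ with set composition type $\opi = \opi(f)$, \cref{lm:minkfaces} and \eqref{eq:facesimpl} give $\mathfrak{q}_f = \sum_{A \in \mathcal{F}(\mathfrak{q})} a_A \mathfrak{s}_{A_{\opi}}$. Since each summand is a nonnegative dilate of a simplex and the dilation coefficients are strictly positive, this Minkowski sum is a point if and only if every $\mathfrak{s}_{A_{\opi}}$ is a point, i.e. $\#A_{\opi} = 1$ for all $A \in \mathcal{F}(\mathfrak{q})$ — exactly \cref{lm:vcc}. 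Crucially, this criterion makes no reference to the values $a_A$, only to the index family $\mathcal{F}(\mathfrak{q})$.

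Now let $\mathfrak{q}_1, \mathfrak{q}_2$ be hypergraphic polytopes with $\mathcal{F}(\mathfrak{q}_1) = \mathcal{F}(\mathfrak{q}_2) =: \mathcal{A}$. Then for every $\opi \in \oPi_n$,
\begin{equation*}
(\mathfrak{q}_1)_{\opi} = \pt \iff \#A_{\opi} = 1 \ \text{for all } A \in \mathcal{A} \iff (\mathfrak{q}_2)_{\opi} = \pt,
\end{equation*}
using \cref{lm:vcc} for each polytope. Hence the index sets of the two sums agree, and by \eqref{eq:uhsmiswqsym},
\begin{equation*}
\boldsymbol{\Upsilon}_{\mathbf{HGP}}(\mathfrak{q}_1) = \sum_{(\mathfrak{q}_1)_{\opi} = \pt} \mathbf{M}_{\opi} = \sum_{(\mathfrak{q}_2)_{\opi} = \pt} \mathbf{M}_{\opi} = \boldsymbol{\Upsilon}_{\mathbf{HGP}}(\mathfrak{q}_2).
\end{equation*}
There is essentially no obstacle here: the proposition is a direct packaging of \cref{lm:vcc}, and the only thing to be slightly careful about is the strict positivity of the coefficients $a_A$ (guaranteed since both polytopes are hypergraphic and $A$ ranges exactly over the support $\mathcal{F}$), which is what lets us pass from "the Minkowski sum is a point" to "each summand is a point." If one wanted, one could observe additionally that this shows $\boldsymbol{\Upsilon}_{\mathbf{HGP}}(\mathfrak{q}) = \boldsymbol{\Upsilon}_{\mathbf{HGP}}(\mathcal{F}^{-1}(\mathcal{F}(\mathfrak{q})))$, i.e. the map factors through the fundamental hypergraphic polytopes, which is the form in which the result will be used in the proof of \cref{thm:nestokernel}.
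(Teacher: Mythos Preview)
Your proof is correct and is exactly the argument the paper has in mind: the proposition is stated without an explicit proof block in the paper because it is an immediate consequence of \cref{lm:vcc} together with \eqref{eq:uhsmiswqsym}, precisely as you spell out. Your added remark that this yields $\boldsymbol{\Upsilon}_{\mathbf{HGP}}(\mathfrak{q}) = \boldsymbol{\Upsilon}_{\mathbf{HGP}}(\mathcal{F}^{-1}(\mathcal{F}(\mathfrak{q})))$ is also the intended takeaway, used to reduce to fundamental hypergraphic polytopes in the proof of \cref{thm:nestokernel}.
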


It remains to discuss the kernel of the map $\uhsm_{\mathbf{HGP}} $ in the space of fundamental hypergraphic polytopes $\{ \mathcal{F}^{-1}(\mathcal{A}) | \, \, \mathcal{A} \subseteq 2^{[n]}\setminus \{\emptyset \} \}$.
For non-empty sets $A \subseteq [n]$, define $\text{Orth } A = \{\opi \in \oPi_n | \# A_{\opi } = 1 \}$.
We now exhibit some linear relations of the chromatic function on fundamental hypergraphic polytopes.

\begin{thm}[Modular relations for $\boldsymbol{\Upsilon }_{\mathbf{HGP}} $]\label{thm:nestomodrel}
Let $\mathcal{A}, \mathcal{B} $ be two disjoint families of non-empty subsets of $[n]$.
Consider the hypergraphic polytope $\mathfrak{q} =  \mathcal{F}^{-1}( \mathcal{A})$, and take $ \mathcal{K} = \cup_{A \in \mathcal{A}} (\Orth  A )^c $,  and $ \mathcal{J} = \cup_{ B \in \mathcal{B} } \Orth \,  B$, families of set compositions.

Suppose that $\mathcal{K}\cup \mathcal{J} = \oPi_n $.
Then,
$$\sum\nolimits_{\mathcal{T} \subseteq \mathcal{B}}\, (-1)^{\# \mathcal{T}} \,\boldsymbol{\Upsilon }_{\mathbf{HGP}} \left[ \mathfrak{q} + \mathcal{F}^{-1}  ( \mathcal{T} ) \right] = 0\, ,$$
where the sum $\mathfrak{q} + \mathcal{F}^{-1}  ( \mathcal{T} ) $ is taken as the Minkowski sum.
\end{thm}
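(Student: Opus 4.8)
The plan is to expand every term in the monomial basis $\{\mathbf{M}_{\opi}\}_{\opi \in \oPi_n}$ of $\mathbf{WQSym}_n$ and then recognise the alternating sum as an inclusion--exclusion that collapses to the empty sum. First I would record the monomial expansion of each summand. For $\mathcal{T}\subseteq\mathcal{B}$ the Minkowski sum $\mathfrak{q}+\mathcal{F}^{-1}(\mathcal{T})$ is a hypergraphic polytope whose positive-coefficient family is $\mathcal{A}\cup\mathcal{T}$ (the disjointness of $\mathcal{A}$ and $\mathcal{B}$ guarantees $\mathcal{A}\cap\mathcal{T}=\emptyset$, so each Minkowski coefficient equals $1$), so combining \eqref{eq:uhsmiswqsym} with \cref{lm:vcc} gives
\[
\boldsymbol{\Upsilon}_{\mathbf{HGP}}\bigl[\mathfrak{q}+\mathcal{F}^{-1}(\mathcal{T})\bigr] \;=\; \sum_{\opi\,\in\,\bigcap_{A\in\mathcal{A}}\Orth A\;\cap\;\bigcap_{B\in\mathcal{T}}\Orth B}\mathbf{M}_{\opi}\,.
\]
(One could alternatively first pass to $\mathcal{F}^{-1}(\mathcal{A}\cup\mathcal{T})$ via the simple relations of \cref{prop:nestosmplmodrel}, but this step is not actually needed here.)

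Next I would substitute this into $\sum_{\mathcal{T}\subseteq\mathcal{B}}(-1)^{\#\mathcal{T}}\boldsymbol{\Upsilon}_{\mathbf{HGP}}[\mathfrak{q}+\mathcal{F}^{-1}(\mathcal{T})]$ and interchange the order of summation, summing first over $\opi\in P:=\bigcap_{A\in\mathcal{A}}\Orth A$. For a fixed such $\opi$, the requirement ``$\opi\in\Orth B$ for every $B\in\mathcal{T}$'' is equivalent to $\mathcal{T}\subseteq\mathcal{B}_{\opi}$, where $\mathcal{B}_{\opi}:=\{B\in\mathcal{B}:\opi\in\Orth B\}$. Hence the coefficient of $\mathbf{M}_{\opi}$ in the alternating sum is $\sum_{\mathcal{T}\subseteq\mathcal{B}_{\opi}}(-1)^{\#\mathcal{T}}$, which by the binomial identity equals $1$ if $\mathcal{B}_{\opi}=\emptyset$ and $0$ otherwise.

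Therefore the alternating sum equals $\sum\mathbf{M}_{\opi}$ taken over those $\opi$ satisfying $\opi\in\bigcap_{A\in\mathcal{A}}\Orth A$ and $\mathcal{B}_{\opi}=\emptyset$. Unwinding the definitions: $\opi\in\bigcap_{A\in\mathcal{A}}\Orth A$ means $\opi\notin(\Orth A)^c$ for every $A\in\mathcal{A}$, i.e. $\opi\notin\mathcal{K}$; and $\mathcal{B}_{\opi}=\emptyset$ means $\opi\notin\bigcup_{B\in\mathcal{B}}\Orth B=\mathcal{J}$. So the alternating sum is $\sum_{\opi\in\oPi_n\setminus(\mathcal{K}\cup\mathcal{J})}\mathbf{M}_{\opi}$, which is the empty sum by the hypothesis $\mathcal{K}\cup\mathcal{J}=\oPi_n$, and hence vanishes, as claimed.

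The argument is essentially bookkeeping, and I do not expect a serious obstacle: the only delicate points are the correct matching of complements — so that $\mathcal{K}$ records ``$\opi$ fails $\Orth A$ for some $A\in\mathcal{A}$'' while $\mathcal{J}$ records ``$\opi$ lies in $\Orth B$ for some $B\in\mathcal{B}$'' — and the identification $\mathcal{F}(\mathfrak{q}+\mathcal{F}^{-1}(\mathcal{T}))=\mathcal{A}\cup\mathcal{T}$, which is where the disjointness hypothesis (and, if one prefers, the simple relations) enters.
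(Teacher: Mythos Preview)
Your proof is correct and follows essentially the same approach as the paper: both expand the alternating sum, swap the order of summation, and reduce the coefficient of each basis element to $\sum_{\mathcal{T}\subseteq\mathcal{B}_{\opi}}(-1)^{\#\mathcal{T}}$, then use the hypothesis $\mathcal{K}\cup\mathcal{J}=\oPi_n$ to conclude that no surviving $\opi$ exists. The only cosmetic difference is that the paper indexes the computation by colorings $f$ (with the indicator $\eta_f(\mathfrak{q})=\mathbb{1}[f\text{ is }\mathfrak{q}\text{-generic}]$) rather than passing directly to the monomial basis $\{\mathbf{M}_{\opi}\}$ via \eqref{eq:uhsmiswqsym} and \cref{lm:vcc} as you do; your $\mathcal{B}_{\opi}$ is exactly the paper's $\mathcal{B}(f)$ for $\opi=\opi(f)$.
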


\begin{figure}[ht]
\centering
\includegraphics[scale=0.50]{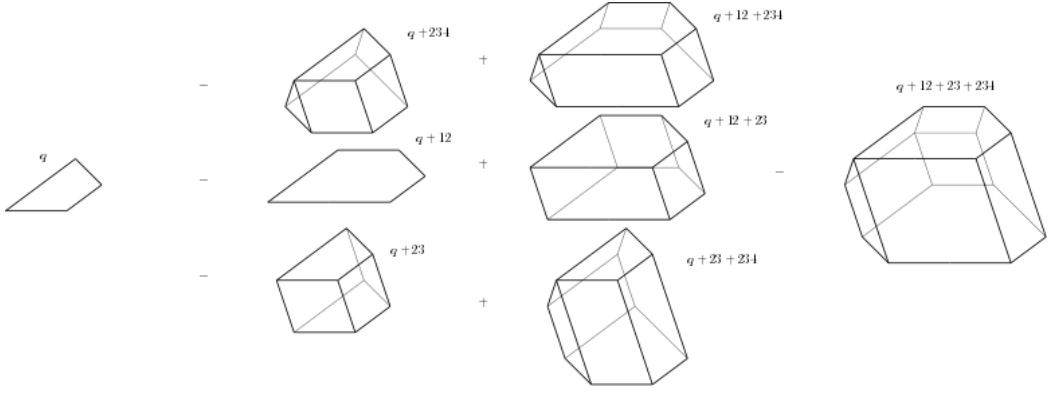}
\caption{\label{fig:modrelnesto}A modular relation on hypergraphic polytopes for $\mathfrak{q} = \mathcal{F}^{-1} (  \{1, 4\}, \{1, 2, 4\} ) $.}
\end{figure}

The sum $\sum_{\mathcal{T} \subseteq \mathcal{B}}\, (-1)^{\# \mathcal{T}} \, \left[ \mathfrak{q} + \mathcal{F}^{-1}  ( \mathcal{T} ) \right]   $ is called a \textit{modular relation on hypergraphic polytopes}.
An example can be observed in \cref{fig:modrelnesto} for $n = 4$, where we take the families $\mathcal{A} = \{ \{1, 4 \}, \{1, 2, 4 \}  \}$,  $\mathcal{B} =  \{\{1, 2\}, \{2, 3\}, \{2, 3, 4\} \} $.

\begin{proof}[Proof of \cref{thm:nestomodrel}]
Write $\eta_f ( \mathfrak{q} ) = \mathbb{1} [ f \text{ is } \mathfrak{q}-\text{generic } ]$. 
The expansion of $\boldsymbol{\Upsilon }_{\mathbf{HGP}}$ for general hypergraphic polytopes $\mathfrak{q}$ is given by 
$$ \mathbf{\Psi }_{\mathbf{HGP}} (\mathfrak{q}) = \sum_{f } x_f \eta_f(\mathfrak{q})  =  \sum_{f \text{ is }\mathfrak{q}\text{-generic}} x_f \, .$$

For short, write $\mathrm{MRN} $ for the modular relation for hypergraphic polytopes at hand.
Hence:

\begin{equation}\label{eq:modrelnesto1}
\begin{split}
\mathbf{\Psi }_{\mathbf{HGP}}(\mathrm{MRN}) =& \sum_{\mathcal{T} \subseteq \mathcal{B}} (-1)^{\# \mathcal{T}} \mathbf{\Psi }_{\mathbf{HGP}} \left[ \mathfrak{q} + \sum_{T \in \mathcal{T}} \mathfrak{s}_{T}  \right]\\
=&  \sum_{\mathcal{T} \subseteq \mathcal{B}} (-1)^{\# \mathcal{T}}\sum_f x_f \eta_f\left( \mathfrak{q} + \sum_{T \in \mathcal{T}} \mathfrak{s}_{T} \right) \\
=& \sum_f x_f \left[\sum_{\mathcal{T} \subseteq \mathcal{B}} (-1)^{\# \mathcal{T}} \eta_f \left( \mathfrak{q} + \sum_{T \in \mathcal{T}} \mathfrak{s}_{T} \right) \right]\, .
\end{split}
\end{equation}

We note from \cref{lm:minkfaces,lm:vcc} that for hypergraphic polytopes $\mathfrak{q}, \mathfrak{p}$, any coloring $f$ that is not $\mathfrak{q}$-generic is not $(\mathfrak{q} + \mathfrak{p})$-generic.
Hence, if $ \eta_f (\mathfrak{q}) =0 $ then it follows that $ \eta_f \left( \mathfrak{q} + \sum_{T \in \mathcal{T}} \mathfrak{s}_{T} \right) ~=~0$ for any $\mathcal{T}\subseteq \mathcal{B}$.
We restrict the sum to $\mathfrak{q}$-generic colorings.

Further, define $\mathcal{B}(f) = \{ B\in \mathcal{B} | f \text{ is } \mathfrak{s}_{B}-\text{ generic } \} $.
Again according to \cref{lm:minkfaces,lm:vcc}, we have that $ \eta_f \left( \mathfrak{q} + \sum_{T \in \mathcal{T}} \mathfrak{s}_{T} \right) ~=~1$ exactly when $\mathcal{T} \subseteq \mathcal{B}(f) $, 
so the equation \eqref{eq:modrelnesto1} becomes

\begin{equation}
\begin{split}\label{eq:modrelnesto2}
\mathbf{\Psi }_{\mathbf{HGP}}(MRN)
=& \sum_f x_f \left[\sum_{\mathcal{T} \subseteq \mathcal{B}} (-1)^{\# \mathcal{T}} \eta_f \left( \mathfrak{q} + \sum_{T \in \mathcal{T}} \mathfrak{s}_{T} \right) \right] \\
=& \sum_f x_f \left[\sum_{\mathcal{T} \subseteq \mathcal{B}(f)} (-1)^{\# \mathcal{T}} \right] = \sum_{\substack{f \, \, \mathfrak{q} \text{- generic} \\ \mathcal{B}(f) =\emptyset }} x_f \, .
\end{split}
\end{equation}

It suffices to show that no coloring $f$ is both $\mathfrak{q}$-generic and satisfies $B(f) =\emptyset $.
Suppose otherwise, and take such $f$.
If $f$ is  $\mathfrak{q}$-generic, we have $\opi(f) \not\in \mathcal{K} $.
If $B(f) = \emptyset$ we have that $\opi(f) \not \in \mathrm{ Orth } \, B_j $ for any $j$, hence $\opi(f) \not \in \mathcal{J}$. 
Given that $f\in \mathcal{K}\cup \mathcal{J} =\oPi_n $, we have a contradiction.
It follows that \cref{eq:modrelnesto2} becomes $\mathbf{\Psi }_{\mathbf{HGP}}(\mathrm{MRN} ) = 0 $,
which concludes the proof.
\end{proof}

\begin{rem}
Recall that $Z: \mathbf{G} \to \mathbf{GP}$ is the graph zonotope map.
It can be noted that, if $l = G - G \cup \{e_1\} - G \cup \{e_2\} + G \cup \{e_1, e_2\}$ is a modular relation on graphs, then $Z(l) $ is the modular relation on hypergraphic polytopes corresponding to $\mathfrak{q} = Z(G) $ ( i.e. $\mathcal{A} = E(G) $), and $\mathcal{B} = \{e_1, e_2 \}$ ) .
In this case, the condition $\mathcal{K} \cup \mathcal{J}  = \oPi_n $ follows from the fact that no proper coloring of $G$ is monochromatic in both $e_1$ and $e_2$.
\end{rem}

Recall that we set $\mathfrak{p}^{\opi} = \mathcal{F}^{-1} (\{A \subseteq [n] | \# A_{\opi} = 1 \} ) $.
This only depends on the equivalence class $[\opi ] $ under $\sim$, and we may write the same polytope as $\mathfrak{p}^{[\opi ] }$.
These are called basic hypergraphic polytopes and are a particular case of fundamental hypergraphic polytopes.

To prove \cref{thm:nestokernel}, we follow roughly the same idea as in the proof of \cref{thm:graphkernel}:
We use the family of hypergraphic polytopes $\{ \mathfrak{p}^{[\opi]} \}_{ [\opi ] \in \oPi_n / \sim  }$ to apply \cref{lm:kernandimdesc}, whose image by $\boldsymbol{\Upsilon }_{\gpHa}$ is linearly independent.
Recall that in \cref{lm:liness}, we established that it spans the image of $\uhsm_{\mathbf{HGP}}$.
%, and also all other hypergraphic polytope reduce through modular relations to bigger hypergraphic polytope as in \cref{lm:kernandimdesc}.

\begin{proof}[Proof of \cref{thm:nestokernel}]
First recall that $\mathbf{HGP}_n$ is a linear space generated by the hypergraphic polytopes in $\mathbb{R}^n$.
According to \cref{prop:nestosmplmodrel}, to compute the kernel of $\uhsm_{\mathbf{HGP}}$, it suffices us study the span of the fundamental hypergraphic polytopes.
Fix a total order $\tilde{\geq } $ on fundamental hypergraphic polytopes $\mathfrak{q}$ so that $\# \mathcal{F}(\mathfrak{q}) $ is non decreasing.

We apply \cref{lm:kernandimdesc} with \cref{thm:nestomodrel} to this finite dimensional subspace of $\mathbf{HGP}_n$.
%With this, we show that the modular relations generate the kernel of $\boldsymbol{\Upsilon }$ together with the weak isomorphism relations.

%We want to show that $\{ \mathfrak{p}^{[\opi]} \}_{ [\opi]  \in \oPi_n/ \sim }$ is stable generator.
\cref{lm:liness} guarantees that $\{\boldsymbol{\Upsilon }_{\gpHa } ( \mathfrak{p}^{ [\opi]} )\}_{ [\opi]  \in \oPi_n/ \sim }$ is linearly independent.
Therefore, it suffices to show that for any fundamental hypergraphic polytopes $\mathfrak{q}$ that is not a basic hypergraphic polytope, we can write some modular relation $b$ as $b =  \mathfrak{q} + \sum_i \lambda_i \mathfrak{q}_i$, 
where $\# \mathcal{F}(\mathfrak{q}) < \# \mathcal{F}(\mathfrak{q}_i ) \,\forall i$.
Indeed, it would follow from \cref{lm:kernandimdesc} that the simple relations and the modular relations on hypergraphic polytopes span $\ker \boldsymbol{\Upsilon }_{\mathbf{HGP}}$.

The desired modular relation is constructed by taking $\mathcal{A} = \mathcal{F}(\mathfrak{q}) $ and $\mathcal{B}= \mathcal{F}(\mathfrak{q})^c $ in \cref{thm:nestomodrel}.
Let us write $\mathcal{K} = \cup_{A \in \mathcal{F}(\mathfrak{q})} (\Orth  A )^c$ and $\mathcal{J} = \cup_{ B \in \mathcal{F}(\mathfrak{q})^c} \Orth  B  $.
We claim that $\mathcal{K} \cup \mathcal{J} = \oPi_n $.

Take, for sake of contradiction, some $\opi \not \in \mathcal{K} \cup \mathcal{J} $.
Note that from $\opi \not\in \mathcal{K}$ we have $\# A_{\opi } = 1 $ for every $A \in \mathcal{F} (\mathfrak{q} )$.
Note as well that from $\opi \not\in \mathcal{J}$ we have that $\#B_{\opi} \neq 1$ for every $B \not\in \mathcal{F} (\mathfrak{q} )$.
Therefore, if $\opi \not \in \mathcal{K} \cup \mathcal{J} $, then $\mathfrak{q} = \mathfrak{p}^{\opi }$, contradicting the assumption that $\mathfrak{q} $ is not a basic hypergraphic polytope.
We obtain that $\mathcal{K} \cup \mathcal{J} = \oPi_n $.
Finally, note that 
$$\mathfrak{q} + \sum\nolimits_{\mathcal{T} \subseteq \mathcal{F}(\mathfrak{q})^c} (-1)^{\# \mathcal{T} }  \left[\mathfrak{q} + \mathcal{F}^{-1} (\mathcal{T} ) \right] \, ,$$
is a modular relation that respects the order $\tilde{\geq}$, showing that the hypotheses of \cref{lm:kernandimdesc} are satisfied.
\end{proof}

For the commutative case we use \cref{lm:kernofcomp}.
Note that we already have a generator set of $\ker \boldsymbol{\Upsilon }_{\mathbf{HGP} }$, so similarly to the proof of \cref{thm:graphcomukernel}, we just need to establish some linear independence.

Recall that two hypergraphic polytopes $\mathfrak{q}_1$ and $\mathfrak{q}_2$ are isomorphic if there is a permutation matrix $P$ such that $x \in \mathfrak{q}_2 \Leftrightarrow x P \in  \mathfrak{q}_1$.
If $\opi_1$ and $\opi_2$ share the same composition type, then $\mathfrak{p}^{\opi_1} $ and $ \mathfrak{p}^{\opi_2} $ are isomorphic, and so we have $\Psi_{\mathbf{HGP}} ( \mathfrak{p}^{\opi_1} ) = \Psi_{\mathbf{HGP} } ( \mathfrak{p}^{\opi_2} )$.
Set $R_{\alpha ( \opi ) } : = \Psi_{\mathbf{HGP} } ( \mathfrak{p}^{\opi} )$ without ambiguity.

\begin{proof}[Proof of \cref{thm:nestocomukernel}]
We use \cref{lm:kernofcomp} with the map $\Psi_{\mathbf{HGP}} = \comu \circ \boldsymbol{\Upsilon }_{\mathbf{HGP}}$.

From the proof of \cref{thm:nestokernel}, to apply \cref{lm:kernofcomp} it is enough to establish that the family $\{R_{\alpha  }  \}_{\alpha \in \mathcal{C}_n }$ is linearly independent.
It would follow that $\ker \Psi_{\mathbf{HGP}} $ is generated by the modular relations, the simple relations and the isomorphism relations, and that $\{ R_{\alpha  } \}_{\alpha \in \mathcal{C}_n }$  is a basis of $\im \Psi_{\mathbf{G}}$, concluding the proof.

To show that $\{R_{\alpha  } \}_{\alpha \in \mathcal{C}_n }$ is linear independent, write $R_{\alpha  } $ on the monomial basis of $QSym$, and use the order $\leq' $ mentioned in \cref{lm:ordercomps}.

As a consequence of \eqref{eq:fundnestocase}, if we write $A_{\opi, \beta} = \#\{\otau \in \oPi_n | \opi \preceq \otau,  \,  \alpha(\otau ) = \beta \} $, from \cref{lm:ordercomps} we have:
\begin{equation}
\begin{split}
R_{\alpha ( \opi ) } = \Psi_{\mathbf{HGP}}(\mathfrak{p}^{\opi  } ) &= \sum_{\opi \preceq \otau} M_{\alpha ( \otau ) }
								 = A_{\opi, \alpha(\opi) } M_{\alpha(\opi)} + \sum_{\alpha(\opi )<' \beta } A_{\opi, \beta } M_{\beta }\, .
\end{split}
\end{equation}
It is clear that $A_{\opi, \alpha(\opi) } > 0 $, so independence follows, which completes the proof.
\end{proof}

\begin{rem}
We have obtained in the proof of \cref{thm:nestocomukernel} that $\{ R_{\alpha}\}_{\alpha \in \mathcal{C}_n}$ is a basis for $Qym_n$.
The proof gives us a recursive way to compute the coefficients $\zeta_{\alpha} $ on the expression $\Psi_{\mathbf{HGP}} ( \mathfrak{q} ) = \sum_{\alpha \in \mathcal{C}_n} \zeta_{\alpha} R_{\alpha }$.
It is then natural to ask if combinatorial properties can be obtained for these coefficients, which are isomorphism invariants.

Similarly, in the non-commutative case, we can write the chromatic quasisymmetric function of a hypergraphic polytope as 
$$\uhsm_{\mathbf{HGP}} ( \mathfrak{q} ) = \sum_{[\opi] \in \oPi_n / \sim} \zeta_{[\opi]}(\mathfrak{q}) \uhsm_{\mathbf{HGP}} (\mathfrak{p}^{[\opi ] })\, , $$ and ask for the combinatorial meaning of the coefficients $( \zeta_{[\opi ]} )_{[\opi ] \in \oPi_n /\sim }$.
These questions are not answered in this paper.
\end{rem}

\subsection{The dimension of \textbf{SC} space}

Let $sc_n := \dim \mathbf{SC}_n$.
Recall that, from \cref{def:SCspace}, the elements of the Hopf algebra $\mathbf{SC}$ are of the form $ \sum\nolimits_{\opi \models [n]  }  \mathbf{M}_{\opi} a_{\opi }$, 
where $a_{\opi_1} = a_{\opi_2 }$ whenever $\opi_1 \sim \opi_2$ in the SC equivalence relation.
Hence, $sc_n$ counts the equivalence classes of $\sim $.

The goal of this section is to compute the asymptotics of $sc_n$, by using the combinatorial description in \cref{prop:combinterpretation}.

\begin{prop}\label{prop:powserim}
Let $F(x) = \sum_{n \geq 0} sc_n \frac{x^n}{n!} $ be the exponential power series enumerating the dimensions of $\mathbf{SC}_n$.
Then 
$$F(x) = \frac{e^x}{ 1 + (1 + x) e^x - e^{2x } }\, . $$
\end{prop}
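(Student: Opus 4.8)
The plan is to identify the equivalence classes of $\sim$ on $\oPi_n$ with a family of labelled combinatorial structures, and then to extract their exponential generating function via the symbolic method; recall that $sc_n$ counts these classes.

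First I would translate \cref{prop:combinterpretation} into a clean description of the classes. Call a block of a set partition \emph{big} if it has at least two elements and \emph{small} if it is a singleton. I claim that an equivalence class $[\opi]$ carries exactly the same information as an ordered sequence $(G_1,\dots,G_m)$ with $G_1\uplus\cdots\uplus G_m=[n]$, in which each $G_i$ is tagged either as a \emph{block}, and then $\#G_i\geq 2$, or as a \emph{bag}, and then $\#G_i\geq 1$, subject to the single constraint that no two consecutive parts are both bags. The correspondence sends $[\opi]$ to the sequence obtained from any representative $\opi=S_1|\cdots|S_k$ by collapsing each maximal run of consecutive singleton blocks into one bag (containing those singletons) and keeping each non-singleton block as a block-tagged part. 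Using \cref{prop:combinterpretation} I would verify: this map is well defined (within a maximal run of singletons every ordering is $\sim$-equivalent, and the SC property forbids permuting anything else), injective (if two set compositions give the same sequence, they share the same underlying set partition $\makepar$, the same relative order of big blocks, and place each singleton in the same ``gap'' between big blocks, so any order-reversed pair of blocks consists of two singletons and the SC property holds), and surjective (given such a sequence, order each bag arbitrarily). The ``no two bags adjacent'' condition is automatic from the maximality of the singleton runs.

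Next I would set up the generating functions. Block-tagged parts carry no extra structure, so they are enumerated by $\beta(x)=\sum_{n\geq 2}\tfrac{x^n}{n!}=e^x-1-x$, and bag-tagged parts (nonempty sets, with no extra structure) by $\sigma(x)=\sum_{n\geq 1}\tfrac{x^n}{n!}=e^x-1$. Let $F$ be the EGF of admissible sequences and let $F_0$ be the EGF of those that are empty or begin with a block. A sequence counted by $F_0$ is either empty or a block followed by an arbitrary admissible sequence, so $F_0=1+\beta F$; and an arbitrary admissible sequence either is counted by $F_0$ or is a bag followed by a sequence not beginning with a bag, so $F=F_0+\sigma F_0=(1+\sigma)(1+\beta F)$. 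Solving this linear equation,
\begin{equation*}
F=\frac{1+\sigma}{1-(1+\sigma)\beta}.
\end{equation*}

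Finally I would substitute $1+\sigma=e^x$ and $(1+\sigma)\beta=e^x(e^x-1-x)=e^{2x}-(1+x)e^x$, so that $1-(1+\sigma)\beta=1+(1+x)e^x-e^{2x}$ and hence $F(x)=\dfrac{e^x}{1+(1+x)e^x-e^{2x}}$; the denominator has constant term $1$, so this is a genuine formal power series (and a quick check gives $sc_0=1$, $sc_1=1$, $sc_2=2$, $sc_3=8$, $sc_4=40$). I expect the only real work to be the first step --- reading off from the SC property that an equivalence class is precisely an ordered partition of $[n]$ into big blocks and unordered singleton-bags with no two bags adjacent; the remaining generating-function manipulation is routine.
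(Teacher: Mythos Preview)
Your proposal is correct and follows essentially the same route as the paper: the paper calls your ``blocks'' and ``bags'' \emph{unbarred} and \emph{barred} blocks respectively, sets up the identical bijection between $\sim$-classes and these ``integral barred set compositions'', and then applies the symbolic method with the same two atomic EGFs $e^x-1-x$ and $e^x-1$. The only cosmetic difference is in organising the recursion (the paper splits $F$ into three pieces $\overline{F}$, $F^o$, $O$ and writes two coupled equations, whereas you use the single auxiliary $F_0$), and your verification of the bijection via \cref{prop:combinterpretation} is more explicit than the paper's.
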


\begin{prop}\label{prop:growthdimim}
The dimension of $ \mathbf{SC}$ has an asymptotic growth of 
$$  sc_n = n! \gamma^{-n}\left( \tau + o ( \delta^{-n} )  \right)  \, , $$
where $\delta < 1$ is some real number, $\gamma \approxeq 0.814097 \approxeq 1.1745 \log (2) $ is the unique positive root of the equation

$$ e^{2x} = 1 + ( 1+x) e^x \, ,$$
and $\tau = \Res_{\gamma } (F) \approxeq 0.588175 $ is the residue of the function $F$ at $\gamma $.

\end{prop}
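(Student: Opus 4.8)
The plan is to extract the asymptotics of the coefficients $sc_n$ from the closed form of the exponential generating function $F(x)$ obtained in \cref{prop:powserim}, via standard singularity analysis for meromorphic functions. First I would set $g(x) = 1 + (1+x)e^x - e^{2x}$, so that $F(x) = e^x/g(x)$, and locate the singularities of $F$: since $e^x$ and $g$ are entire, the singularities of $F$ are exactly the zeros of $g$. The key structural fact I want is that $g$ has a \emph{unique} zero $\gamma$ of smallest modulus, that this zero is real, positive, and simple, and that every other zero has modulus strictly larger than $\gamma$ by a definite margin. Once this is established, the transfer theorem for meromorphic generating functions gives
\begin{equation*}
[x^n]F(x) = -\frac{\Res_{\gamma}(F)}{\gamma^{n+1}} + O(R^{-n})
\end{equation*}
for any $R$ below the modulus of the next singularity; multiplying by $n!$ and writing $\tau = \Res_\gamma(F)$, $\delta = \gamma/R < 1$ yields precisely $sc_n = n!\,\gamma^{-n}(\tau + o(\delta^{-n}))$ after absorbing the sign into the definition of $\tau$ (one checks $g'(\gamma) < 0$, so $\Res_\gamma(F) = -e^\gamma/g'(\gamma) > 0$, matching the stated numerical value $\approx 0.588175$).

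The concrete steps are: (i) verify that $g(x) = 0$ is equivalent to $e^{2x} = 1 + (1+x)e^x$, the equation in the statement, and that on the positive real axis the function $x \mapsto e^{2x} - (1+x)e^x - 1$ is negative at $x=0$ (value $-1$) and tends to $+\infty$, with positive derivative on the relevant range, so there is a unique positive real root $\gamma$; a numerical check pins $\gamma \approx 0.814097$, and the cute identity $\gamma \approx 1.1745\log 2$ is just a numerical coincidence worth recording. (ii) Show $\gamma$ is simple: $g'(x) = e^x + (1+x)e^x - 2e^{2x} = (2+x)e^x - 2e^{2x}$, and evaluating at $\gamma$ using $e^{\gamma} = $ the root relation shows $g'(\gamma) \neq 0$. (iii) Compute the residue: $\Res_\gamma(e^x/g(x)) = e^\gamma/g'(\gamma)$, and plug in the value of $\gamma$ to get $\tau \approx 0.588175$. (iv) Rule out other small zeros: one must argue that no complex zero of $g$ has modulus $\le \gamma$ other than $\gamma$ itself. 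This is the step I expect to be the main obstacle, since $g$ is a transcendental entire function with infinitely many zeros, so one cannot simply enumerate them.

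For step (iv) I would argue as follows. Write $z = \gamma + it$ or more generally $z = x + iy$ with $x^2 + y^2 \le \gamma^2$; the dominant term for $\mathrm{Re}(z)$ small is a competition between $e^{2z}$, $(1+z)e^z$, and $1$. On the imaginary axis $z = iy$ one has $|e^{2iy}| = |e^{iy}| = 1$, so $g(iy) = 1 + (1+iy)e^{iy} - e^{2iy}$, and a direct estimate shows $|g(iy)| \ge 1 - |1+iy| - 1$ fails to be useful directly, so instead I would compute $\mathrm{Re}(g(iy))$ and $\mathrm{Im}(g(iy))$ explicitly and show they do not simultaneously vanish for $|y| \le \gamma$; alternatively, a Rouché-type argument on the disk $|z| \le \gamma$ comparing $g$ against a suitable dominant term, or a direct interval/monotonicity analysis since the disk is small ($\gamma < 1$), should close it. Because $\gamma < 1$, the disk $|z|\le\gamma$ is small enough that crude bounds $|e^{2z} - 1| \le e^{2|z|} - 1$ and $|(1+z)e^z| \le (1+|z|)e^{|z|}$ combined with lower bounds for $|1 + (1+z)e^z|$ away from the real axis have a good chance of working; the real-axis case is already handled by the uniqueness in step (i). Once (iv) yields that the next-smallest singularity has modulus $\ge \gamma + \epsilon$ for some explicit $\epsilon > 0$, set $\delta = \gamma/(\gamma+\epsilon) < 1$ and the claimed error term follows from classical meromorphic coefficient asymptotics (e.g.\ Flajolet--Sedgewick, Theorem IV.10), completing the proof.
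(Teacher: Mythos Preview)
Your overall plan matches the paper's exactly: singularity analysis of the meromorphic function $F(x)=e^x/g(x)$, locating the dominant singularity at the unique positive real zero $\gamma$ of $g$, checking simplicity, computing the residue, and reading off the asymptotics via the standard transfer for meromorphic EGFs. Steps (i)--(iii) are fine and essentially identical to what the paper does.

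The only real difference, and the only gap in your proposal, is step (iv). Your suggestions (Rouch\'e on the disk $|z|\le\gamma$, or ad hoc estimates of $\mathrm{Re}\,g(iy)$ and $\mathrm{Im}\,g(iy)$) are plausible in principle but not carried out, and as you yourself say they are the main obstacle. The paper sidesteps all of this with a one-line positivity argument you missed: the equation $g(z)=0$ rewrites as
\[
1=\sum_{n\ge 1}\frac{2^n-n-1}{n!}\,z^n,
\]
and since $2^n\ge n+1$ all the Taylor coefficients on the right are \emph{non-negative}. For any zero $z\notin\mathbb{R}_{\ge 0}$ with $|z|=\gamma$, the strict triangle inequality (the summands $z^n$ do not all lie on a common ray) gives
\[
1<\sum_{n\ge 1}\frac{2^n-n-1}{n!}\,\gamma^n=1,
\]
a contradiction; hence $\gamma$ is the unique zero of minimal modulus. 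This is essentially the Daffodil Lemma (Flajolet--Sedgewick, Lemma IV.1), and it works here precisely because $1-g(x)$ has non-negative Taylor coefficients. The paper also invokes Pringsheim's theorem at the outset to guarantee a priori that some dominant singularity sits on the positive real axis, which organizes the argument a bit more cleanly than your direct computation in (i).
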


In particular, $\dim \mathbf{SC}_n $ is exponentially smaller than $\dim \mathbf{WQSym}_n = \# \oPi_n $, which is asymptotically
$$ n! \log (2)^{-n} \left( \frac{1}{2\log( 2)} + o(1) \right)  \, , $$
according to \cite{barthelemy80}.
Before we prove \cref{prop:powserim,prop:growthdimim}, we introduce a useful combinatorial family.

A barred set composition of $[n]$ is a set composition of $[n]$ where some of the blocks may be barred.
For instance, $13|\overline{45}| 2 $ and $ 12 | 4 | \overline{35} $ are bared set compositions of $\{ 1, 2, 3, 4, 5\}$.
A barred set composition is \textit{integral} if
\begin{itemize}
    \item No two barred blocks occur consecutively, and;
    
    \item Every block of size one is barred;
\end{itemize}

An integral barred set composition is also called an IBSC for short.
In \cref{tbl:smallibsp} we have all the integral barred set compositions of small size:

\begin{table}[ht]
\centering
\begin{tabular}{ l | l | r }
  n & IBSC & Equivalent classes under $\sim $\\ \hline
  0 & $\emptyset$ & $\{ \vec{\emptyset } \} $ \\ \hline
  1 & $\overline{1}$ & $\{ 1 \}$ \\ \hline
  2 & $\overline{12}, 12$ & $ \{1|2, 2|1 \}, \{12 \}$ \\ \hline
  3 & $\overline{123}, \overline{1}|23, \overline{2}|13, \overline{3}|12, $ & $[1| 2| 3)],   \{1| 23 \}, \{2| 13 \}, \{3| 12 \}, $ \\
  &  $ 12|\overline{3}, 13|\overline{2}, 23|\overline{1}, 123$ & $\{12| 3 \}, \{13| 2 \}, \{23| 1 \}, \{123 \} $
\end{tabular}
\caption{Small IBSCs and equivalence classes of $\sim$\label{tbl:smallibsp} }
\end{table}
\vspace{0.2cm}

According to \cref{prop:combinterpretation}, we can construct a map from equivalence classes of $\sim $ and integral barred set compositions: from a set composition, we squeeze all consecutive singletons into one bared block.
This map is a bijection, as is inverted by splitting all bared blocks into singletons, and the equivalence classed obtained is independent on the order that this splitting is done.
So, for instance, $\overline{13}|24 \leftrightarrow \{1|3|24 , 3|1|24 \}$ and $13|24 \leftrightarrow \{13|24 \} $.
See \cref{tbl:smallibsp} for more examples.

\begin{proof}[Proof of \cref{prop:powserim}]
We use the framework developed in \cite{flajolet09} of labeled combinatorial classes.
In the following, a calligraphic style letter denotes a combinatorial class, and the corresponding upper case letter denotes its exponential generating function.
Let $\mathcal{B} $ and $\mathcal{U}$ be the collections $\{\overline{1}, \overline{12}, \cdots  \}$ and $ \{12, 123, 1234, \cdots \} $, respectively, with exponential generating functions $B(x)= e^x - 1 $ and $ U(x) = e^x - 1 - x$.
Additionally, let $\mathcal{O} = \{\emptyset \}$ with $O(x) = 1$.

Let $\mathcal{F}$ be the class of IBSCs, and we denote by $\mathcal{F}^o$ the class of IBSCs that start with an unbarred set.
Denote by $\overline{\mathcal{F}}$ the class of IBSCs that start with a barred set.

Our goal is to show that $F(x) = \frac{e^x}{ 1 + (1 + x) e^x - e^{2x } }$.
By definition we have that $\mathcal{F} =\overline{\mathcal{F}} \sqcup \mathcal{F}^o \sqcup \mathcal{O}$.
Further, we can recursively describe $\overline{\mathcal{F}} $ and $\mathcal{F}^o $ as $\overline{\mathcal{F}} = \mathcal{B} \times (\mathcal{F}^o \sqcup \mathcal{O} ) $ and $\mathcal{F}^o = \mathcal{U} \times \mathcal{F}$.

According to the dictionary rules in \cite{flajolet09}, this implies that $F = \overline{F} + F^o + O$ and that

$$\overline{F} (x) = (e^x - 1 )(F^o(x) +1 ) \, , $$
$$F^o (x) = (e^x - 1  - x)(\overline{F}(x) + F^o(x) +1 ) \, , $$

The unique solution of the system has $F^o(x) = \frac{e^{2x} - (1 + x) e^x}{1 -1 e^{2x} + (x + 1)e^x } $ so it follows
$$F(x) = \frac{ e^x }{1 + (1 + x) e^x - e^{2x } }\, $$ 
as desired.
\end{proof}

With this we can easily compute the dimension of $\mathbf{SC}_n$ for small $n$, and compare it with $\dim \bm{WQSym}_n$, as done in \cref{tbl:scseq}.

\begin{table}
\centering
\begin{tabular}{ l | c | c | c | c | c | c | c | c | c | c  }
 n   & 0 & 1 & 2 & 3 & 4 & 5 & 6 & 7 & 8 & 9\\ \hline
 $sc_n$ & 1 & 1 & 2 & 8 & 40 & \small{242} & \small{1784} & \small{15374} & \small{151008} & \small{1669010} \\ \hline
 $\bm{\pi}_n$ & 1 & 1 & 3 & 13 & 75 & \small{541} & \small{4683} & \small{47293} & \small{545835} & \small{7087261}
\end{tabular}
\caption{First elements of the sequences $sc_n$ and $\boldsymbol{\pi }_n = \dim \bm{WQSym}_n$.\label{tbl:scseq}}
\end{table}%\todo[inline]{add dimension of wqsym}

\begin{proof}[Proof of \cref{prop:growthdimim}]
Let $l(x) := e^{2x} - (1 + x) e^x  - 1 $, then $F(x) = -\frac{e^x}{l(x)}$ is the quotient of two entire functions with non-vanishing numerator, so the poles are the zeros of $l(x)$.
Note that $F(x)$ is a counting exponential power series around zero, so it has positive coefficients.
By Pringsheim's Theorem as in \cite{flajolet09}, one of the dominant singularities of $F(x)$ is a positive real number, call it $\gamma $.

We show now that any other singularity $z \neq \gamma$ of $F$, that is, a zero of $l$, has to satisfy $|z|  > |\gamma | $.
Thus, showing that $\gamma $ is the unique dominant singularity and allowing us to compute a simple asymptotic formula.
Suppose, that $z$ is a singularity of $F$ distinct from $\gamma$,  such that $|z| = |\gamma |$.
So, we have that $l(z) = 0 $ and that $z \not\in \mathbb{R}^+$.
The equation $l(z) = 0 $ can easily be rewritten as
$$1 = l(z) + 1 = \sum_{n \geq 1 } z^n \frac{2^n - 1  - n}{n!}  \, . $$
Note that $2^n \geq n+1 $ for $n\geq 1$.
Now we apply the strict triangular inequality on the right hand side to obtain
$$1 < \sum_{n \geq 1 } |z|^n \frac{2^n - n - 1}{n!}  =  \sum_{n \geq 1 } \gamma^n \frac{2^n - n - 1}{n!} = l(\gamma ) + 1 = 1\, , $$
where we note that the inequality is strict for $z\not\in \mathbb{R}^+$ because some of the terms $|z|^n$ do not lie in the same ray through the origin.
This is a contradiction with the assumption that there exists such a pole, as desired.

We additionally prove that $\gamma $ is the unique positive real root, so we can easily approximate it by some numerical method, for instance the bisection method.
The function $l$ in the positive real line satisfies $\lim_{x\to +\infty} l(x) = +\infty $ and $l(0) = -1$, so it has at least one zero.
Note that such zero is unique, as $l'(x)>0 $ for $x$ positive.
Also, since $l'(\gamma ) >0$, the zero $\gamma $ is simple.

Since the function $F(x)$ is meromorphic in $\mathbb{C}$, and $\gamma $ is the dominant singularity, we conclude that 
$$\frac{sc_n}{n!} = \gamma^{-n}\left( \Res_{\gamma }(F) + o ( \delta^{-n} )  \right) \,  ,$$
for any $\delta $ such that $1 > \delta > |\gamma /\gamma_2 | $, where $\gamma_2$ is a second smallest singularity of $F$, if it exists, and arbitrarily large otherwise.

We can approximate $\gamma \approxeq 0.814097 $, and also estimate the residue of $F(x)$ at $\gamma$ as $\tau = \Res_{\gamma } (F) = \frac{e^{\gamma}}{l'(\gamma ) } \approxeq 0.588175 $.
This proves the desired asymptotic formula.
\end{proof}

\subsection{Faces of generalized permutahedra and the singleton commuting equivalence relation}
\label{sec:facesgp}

The main result of this section is the following:

\begin{thm}[Image of $\boldsymbol{ \Upsilon}_{\mathbf{GP}}$]\label{thm:imGP}
The image of $\boldsymbol{ \Upsilon}_{\mathbf{GP}}$ is precisely $\mathbf{SC}$.
\end{thm}

Recall that $\mathbf{SC}$ is the Hopf algebra spanned by $\bigcup_{n\geq 0 } \{\mathbf{N}_{[\opi]}\}_{\opi \in \mathbf{C}_n/_{\sim }}$, a basis indexed by equivalence classes of set compositions on the \textit{singleton commuting equivalence relation}.
This Hopf algebra is precisely the image of $\boldsymbol{ \Upsilon}_{\mathbf{HGP}}$.
Because we have the following inclusion of combinatorial Hopf algebras, $\mathbf{HGP} \subseteq \mathbf{GP}$, it follows that $\mathbf{SC} = \im \boldsymbol{ \Upsilon}_{\mathbf{HGP}}\subseteq \im \boldsymbol{ \Upsilon}_{\mathbf{HGP}}$.

In the remaining of this section we present the proof of the other inclusion.
We remark that an immediate consequence of this result is \cref{cor:PosEmbedding}, where we compare the image of the chromatic map on generalized permutahedra with the image of the chromatic map on posets.
%
%With this result, it immediately follows, as it was done in \cref{cor:PosEmbedding} above, that:
%
%\begin{cor}
%There is no Hopf monoid morphism $\phi : \mathbf{Pos} \to \mathbf{GP}$ that preserves the respective characters.
%\end{cor}

We first establish a lemma about generalized permutahedra and some other relevant propositions:

\begin{lm}\label{lm:invinGP}
Let $\{a_J\}_{\substack{J\subseteq [n]\\ J \neq \emptyset}}$ be a family of real numbers such that $\mathfrak{q} = \sum_{\substack{J\subseteq [n]\\ J \neq \emptyset}}a_J \mathfrak{s}_J $ is a well defined generalized permutahedron that is a point.
Then, for any set $J$ such that $|J| \geq 2$, we have that $ a_J = 0$.
\end{lm}

We remark that this lemma is trivial for hypergraphic polytopes (it is a simple application of \cref{lm:minkfaces}), and it follows that $\im \boldsymbol{ \Upsilon}_{\mathbf{HGP}} = \mathbf{SC}$.
Before we prove this lemma let us establish first some general claims regarding the coefficients $\{a_J\}_{\substack{J\subseteq [n]\\ J \neq \emptyset}}$.

\begin{prop}\label{prop:moebiusingper}
Consider $n\geq 0 $, and fix a set of real numbers $\{a_J\}_{\substack{J\subseteq [n]\\ J \neq \emptyset}}$.
Define for each non-empty $J \subseteq [n]$ the following
$$ \mathcal U_J = \sum_{\substack{K \cap J \neq \emptyset \\ K \subseteq [n]}} a_K \, , \,  \,  \,  \,  \,  \,  \,  \,  \,  \,  \,  \,  \mathcal W_J = \sum_{\substack{ K \supseteq J \\ K \subseteq [n]}} a_K \, .$$

Then, we have the following relation between $\{\mathcal U_J\}_{\substack{J\subseteq [n]\\ J \neq \emptyset}}$ and $\{\mathcal W_J\}_{\substack{J\subseteq [n]\\ J \neq \emptyset}}$:

$$\mathcal U_J = \sum_{\substack{K \subseteq J\\K \neq \emptyset}} (-1)^{|K| + 1} \mathcal W_K \, .$$ 

Furthermore, for $J$ a singleton, we have that
\begin{equation}\label{eq:singletonuwequation}
\mathcal U_J = \mathcal W_J\, .
\end{equation}
\end{prop}

\begin{proof}
First, \eqref{eq:singletonuwequation} is immediate because we observe that the formulas for $\mathcal U_J$ and $\mathcal W_J$ are the same.

Then, observe that for any finite set $J$, we have that
$$\sum_{K\subseteq X} (-1)^{| K |} = \mathbb{1}[X = \emptyset ] \, . $$
%that is, this sum is zero except when $X = \emptyset $, when the sum is one.
%This results from a simple application of the binomial theorem.

Thus, we have that 
\begin{align*}
\sum_{\substack{K \subseteq J\\K \neq \emptyset}} (-1)^{|K| + 1} \mathcal W_K =& \sum_{\substack{K \subseteq J\\K \neq \emptyset}} \sum_{S \supseteq K } (-1)^{|K| + 1} a_S = \sum_{S \subseteq [n] }\sum_{\substack{K \subseteq J\\ K \subseteq S\\K \neq \emptyset}}(-1)^{|K| + 1} a_S \\
=& \sum_{S \subseteq [n]} a_S \left((-1)^{|\emptyset |}  - \sum_{K \subseteq J\cap S}(-1)^{|K|}  \right)\\
=& \sum_{\substack{S \subseteq [n]\\ S \cap J \neq \emptyset}} a_J = \mathcal U_J\, ,
\end{align*}
as desired.
\end{proof}

\begin{prop}\label{prop:maximalityequations}
Consider $n\geq 0 $, and fix a set of real numbers $\{a_J\}_{\substack{J\subseteq [n]\\ J \neq \emptyset}}$ such that the generalized permutahedron $\mathfrak{q} = \sum_{\substack{J\subseteq [n]\\ J \neq \emptyset}}a_J \mathfrak{s}_J $ is well defined.
Consider $ \mathcal U_J $ as in \cref{prop:moebiusingper}, and for a set $K\subseteq [n]$, let $\vec{e}_K$ be the characteristic vector of $K$, that is if $i\in K$, then $(\vec{e}_K)_i=1$, and $(\vec{e}_K)_i=0$ otherwise.

Then $\mathcal U_K = \max_{x\in \mathfrak{q} } \{ \vec{e}_K^T \cdot x  \} $.
\end{prop}

\begin{proof}
%This is a direct computation following the dual version of \cref{lm:minkfaces}.

If $\mathfrak{s}_J$ is a simplex, then $ \max_{x\in \mathfrak{s}_J } \{ \vec{e}_K^T \cdot x  \} = \mathbb{1}[J\cap K \neq \emptyset ]$.
Thus, as we have seen in \cref{lm:minkfaces}, optimization problems commute with the Minkowski operations, so we have the following:
\begin{equation}
\begin{split}
\max_{x\in \mathfrak{q} } \{ \vec{e}_K^T \cdot x  \} &= \sum_{\substack{J\subseteq [n]\\ J \neq \emptyset}} a_J \max_{x\in \mathfrak{s}_J } \{ \vec{e}_K^T \cdot x  \} \\
 &= \sum_{\substack{J\subseteq [n]\\ K \cap J \neq \emptyset}} a_J = \mathcal U_K \, ,
\end{split}
\end{equation}
as desired.
\end{proof}

We are now ready to present the proof of \cref{lm:invinGP}.

\begin{proof}[Proof of \cref{lm:invinGP}]

Assume that $ \sum_{\substack{J\subseteq [n]\\ J \neq \emptyset}} a_J \mathfrak{s}_J $ is a well defined generalized permutahedron that is a point, say $\mathfrak{q} = \{\vec{x}\}$.
Define $\mathcal U_J, \mathcal W_J$ as in \cref{prop:moebiusingper}.
We will show that there is no set $J$ such that $\mathcal W_J \neq 0$ and $|J|\geq 2$.
This readily implies that there is no set $J$ such that $a_J \neq 0$ and $|J|\geq 2$, concluding the lemma.

Assume otherwise, by contradiction, that there is some set $J$ such that $\mathcal W_J \neq 0$ and $|J|\geq 2$.
Let $\mathtt{J}_0$ be the smallest such set.
In particular, observe that $\mathcal W_{\mathtt{J}_0} \neq 0$ but $\mathcal W_J= 0 $ for any $J\subsetneq \mathtt{J}_0$ such that $|J| \geq 2$.

Then, from \cref{prop:maximalityequations}, for any set $J$
we have that 
\begin{equation}
\begin{split}
\mathcal U_J &= \max_{x\in \mathfrak{q} }   \{ \vec{e}_J^T \cdot x  \} = \left(\sum_{j\in J} \vec{e}_{\{j\} }^T \right)  \cdot  \vec{x}\\
			 &= \sum_{j\in J} \vec{e}_{\{j\} }^T \cdot  \vec{x} = \sum_{j \in J} \mathcal U_{\{j \}} =   \sum_{j\in J} \mathcal W_{\{j \}} \, .
\end{split}
\end{equation}

%\\
%\mathcal U_J &- \sum_{j \in J } \mathcal W_{\{j\}} =  0	 \, .

Comparing with \cref{prop:moebiusingper}, we have that 
$$ \sum_{\substack{K \subseteq J\\|K| \geq 2}} (-1)^{1+|K|} \mathcal W_K = \mathcal U_J - \sum_{j \in J } \mathcal W_{\{j\}} =  0 \, ,$$
however, if we let $J= \mathtt{J}_0$, we get
$$ \sum_{\substack{K \subseteq \mathtt{J}_0  \\|K| \geq 2}} (-1)^{1+|K|} \mathcal W_K =  (-1)^{1+|\mathtt{J}_0  |} \mathcal W_{\mathtt{J}_0} \neq 0\, . $$
This is a contradiction with the fact that such set $\mathtt{J}_0$ exists, as desired.
\end{proof}

\begin{proof}[Proof of \cref{thm:imGP}]
We know that
$$\boldsymbol{ \Upsilon}_{\mathbf{GP}}(\mathfrak{q} ) = \sum_{\opi \text{ is } \mathfrak{q}-\text{generic}} \mathbf{M}_{\opi} \, . $$

Suppose that $\opi_1 = A_1| \dots |A_k$, and $\opi_2 = B_1| \dots |B_k$ are set compositions such that $\mathfrak{q}_{\opi_1}$ is a point and $\opi_1 \sim \opi_2$.
Define, for $i=1, \dots , k$, the set $F_i = \bigcup_{j=i+1}^k A_j$, and $G_i = \bigcup_{j=i+1}^k B_j$.
Observe that $F_k = G_k = \emptyset$.
From the assumption that $\opi_1 \sim \opi_2$ we get that for a given $i=1, \dots , k$ and $ K\subseteq B_i$ with $|K| \geq 2$, we have that $A_i=B_i$ and $F_i = G_i$.

We wish to show that $\mathfrak{q}_{\opi_2}$ is also a point.
This concludes the proof, because in this way we can group the sum above as
$$\boldsymbol{ \Upsilon}_{\mathbf{GP}}(\mathfrak{q} ) = \sum_{\text{ all } \otau \in [ \opi ] \text{ are } \mathfrak{q}-\text{generic}} \mathbf{N}_{[\opi ]}  \, ,$$
where the sum runs over equivalence classes $[\opi ]$, and this is trivially an element of $\mathbf{SC}$.
%
%Furthermore, we observe that we just need to prove the lemma for set compositions $\opi_1, \opi_2$ that are close
%TODO
%
%Thus, we can assume that there is an index $\iota \in \{1, \dots , k-1 \}$ such that $A_{\iota}=B_{\iota +1}$ and  $B_{\iota}=A_{\iota +1}$ are singletons, and $A_i=B_i$ for any $i=1, \dots k $ other than $i=\iota, \iota + 1$.

We can rearrange the sum obtained in \cref{eq:facepermutahedra} as follows: for each $i=1, \dots k$ and non-empty $K \subseteq A_i$, we group together all the sets $I$ such that $I_{\opi} = K$.
Those are precisely all the sets $I=J \cup K$ for some $J\subseteq F_i$.
Thus, we have that
$$ \mathfrak{q}_{\opi_1} = \sum_{i=1}^k \sum_{\substack{K\subseteq A_i \\ K \neq \emptyset}} \mathfrak{s}_K \left( \sum_{J\subseteq F_i } a_{J\cup K} \right)\, . $$

From \cref{lm:invinGP}, we have that 
\begin{equation}\label{eq:LASTEQUATION}
\sum_{J\subseteq F_i } a_{J\cup K} = 0 ,  \text{ for each $i=1, \dots , k$ and each $K\subseteq A_i$ with $|K| \geq 2$.}
\end{equation}

Similarly, we have that 
\begin{equation}
 \mathfrak{q}_{\opi_2} =\sum_{i=1}^k \sum_{\substack{K\subseteq B_i \\ K \neq \emptyset }} \mathfrak{s}_K \left( \sum_{J\subseteq G_i } a_{J\cup K} \right)\, . 
\end{equation}
The proof is concluded when we establish that $\sum_{J\subseteq G_i } a_{J\cup K} = 0 $ for each $i=1, \dots , k$ and each $K\subseteq B_i$ with $|K| \geq 2$.
This is precisely \eqref{eq:LASTEQUATION} because in this case we have that$A_i=B_i$ and $F_i = G_i$.
Therefore $ \mathfrak{q}_{\opi_2}$ is a point, as desired.
\end{proof}

\section{Hopf species and the non-commutative universal property\label{ch:HM}}

In \cite{aguiar06}, a character in a Hopf algebra is defined as a multiplicative linear map that preserves unit, and a combinatorial Hopf algebra (or CHA, for short) is a Hopf algebra endowed with a character.
For instance, a character $\eta_0$ in $QSym $ is $\eta_0(M_{\alpha }) = \mathbb{1}[l(\alpha ) = 1 ]$.
In fact, the CHA of quasisymmetric functions $(QSym, \eta_0)$ is a terminal object in the category of CHAs, i.e. for each CHA $(h, \eta) $ there is a unique combinatorial Hopf algebra morphism $\Psi_{h}:h \to QSym $.

Our goal here is to draw a parallel for Hopf monoids in vector species.
We see that the Hopf species $\hs{WQSym}$ plays the role of $QSym$.
Specifically, we construct a unique Hopf monoid morphism from any combinatorial Hopf monoid $\overline{h}$ to $\hs{WQSym}$, in line with what was done in \cite{aguiar06} and \cite{white16hopf}.

In the last section we investigate the consequence of this universal property on the Hopf structure of hypergraphic polytopes and posets.
We use \cref{thm:nestokernel} and \cref{prop:growthdimim} to obtain that no combinatorial Hopf monoid morphism from $\hs{HGP}$ to $\hs{Pos}$ exists.

\begin{rem}
The category of combinatorial Hopf monoids was introduced in two distinct ways, by \cite{aguiar17} in vector species, and by \cite{white16hopf} in pointed set species, which we call here a \textit{comonoidal combinatorial Hopf monoid}.
Here we consider the notion of \cite{aguiar17}.

In \cite{white16hopf}, White shows that a comonoidal combinatorial Hopf monoid in coloring problems is a terminal object on the category of CCHM.
Nevertheless, it is already advanced there that, if we consider a weaker notion of combinatorial Hopf monoid, the terminal object in such category is indexed by set compositions.
No counterpart of $\overline{\mathbf{WQSym}}$ in pointed set species is discussed here.
\end{rem}

%%%%Hopf monoids

\subsection{Hopf monoids in vector species}

In this section, we recall the basic notions on Hopf monoids in vector species introduced in \cite[Chapter 8]{aguiar10}.
We write $Set^{\times }$ for the category of finite sets with bijections as only morphisms, and write $Vec_{\mathbb{K}}$ for the category of vector spaces over $\mathbb{K}$ with linear maps as morphisms.
A \textit{vector species}, or simply a species, is a functor $\overline{a}: Set^{\times} \to Vec_{\mathbb{K}}$.
Species forms a category $Sp_{\mathbb{K}}$, where functors are natural transformations between species.
For a species $\overline{a}$, we denote by $\overline{a}[I]$ the vector space mapped from $I$ through $\overline{a}$.
For a natural transformation $\eta: \overline{a} \Rightarrow \overline{b}$, we may write either $\eta[I]$ or $\eta_I $ to the corresponding map $\overline{a}[I] \to \overline{b}[I]$.

The Cauchy product is defined on species $\overline{a}, \overline{b}$ as follows:
$$(\overline{a} \cdot \overline{b})[I] = \bigoplus_{I = S \sqcup T} \overline{a}[S]\otimes \overline{b}[T] \, .$$

Two fundamental species are of interest.
The first one $\overline{I} $ acts as the identity for the Cauchy product, and is defined as $\overline{I}[\emptyset ] = \mathbb{K}$ for the empty set, and $ \overline{I}[A ] = 0$ otherwise. The functor $\overline{I}$ maps morphisms to the identity.
The exponential species $\overline{E}$ is defined as $\overline{E}[A] = \mathbb{K}$ for any set $A$, and maps morphisms to the identity as well.

A vector species $\overline{a}$ is called a bimonoid if there are natural transformations $\mu:\overline{a}\cdot \overline{a} \Rightarrow \overline{a}$, $\iota: \overline{I} \Rightarrow \overline{a}$, $\Delta : \overline{a} \Rightarrow \overline{a} \cdot \overline{a}$ and $\epsilon: \overline{a} \Rightarrow \overline{I} $ that satisfy some properties which we recover here only informally.
We address the reader to \cite[Section 8.2 - 8.3]{aguiar10} for a detailed introduction of bimonoids in species.

\begin{itemize}

\item The natural transformation $\mu $ is associative.

\item The natural transformation $\iota$ acts as unit on both sides.

\item The natural transformation $\Delta $ is coassociative.

\item The natural transformation $\epsilon $ acts as a counit on both sides.

\item Both $\mu, \Delta $ are determined by maps 
$$\mu_{A, B}: \overline{a}[A]\otimes \overline{a}[B] \to \overline{a}[A\sqcup B]\, , $$
$$\Delta_{A, B}: \overline{a}[A\sqcup B] \to \overline{a}[A]\otimes \overline{a}[B] \, . $$

\item The natural transformations satisfy some coherence relations typical for Hopf algebras. In particular it satisfies diagram \ref{cd:bimonoid} below, which enforces that the multiplicative and comultiplicative structure agree.

\begin{equation}\label{cd:bimonoid}
\begin{tikzcd}
\overline{a}[I]\otimes \overline{a}[J]
\arrow[rrr, "{\mu_{I, J}}"] 
\arrow[d, "{\Delta_{R, T} \otimes \Delta_{U, V}}"] 
& & &
\overline{a}[S]
\arrow[d, "{ \Delta_{M, N} }"] \\
\overline{a}[R]\otimes \overline{a}[T] \otimes \overline{a}[U]\otimes \overline{a}[V]
\arrow[rrr, "{ (\mu_{R, U} \otimes \mu_{T, V})\circ \twist }"]
& & &
\overline{a}[M]\otimes \overline{a}[N]
\end{tikzcd}
\end{equation}
\end{itemize}

We consider the canonical isomorphism $\beta : V \otimes W \to W \otimes V $, and also refer to any composition of tensors of identity maps and $\beta $ as a \textit{twist}.
Whenever needed, we consider a suitable twist function without defining it explicitly, by letting the source and the target of the map clarify its precise definition.
For instance, that is done above in Diagram \eqref{cd:bimonoid}.

We use $\mu_{A, B} $ and $\cdot_{A, B}$ interchangeably for the monoidal product.
Namely, $\cdot_{A, B}$ will be employed for in line notation

We say that a bimonoid $\overline{h}$ is \textit{connected} if the dimension of $\overline{h}[\emptyset] $ is one.
A bimonoid is called a \textit{Hopf monoid} if there is a natural transformation $s:\overline{h} \Rightarrow \overline{h}$, called the \textit{antipode}, that satisfies 
$$ \mu \circ(id_{\overline{h}} \cdot s)\circ\Delta = \iota \circ \epsilon = \mu \circ(s  \cdot id_{ \overline{h}})\circ \Delta \, .$$

\begin{prop}[Proposition 8.10 in \cite{aguiar10}]\label{prop:takeuchi}
If $\overline{h}$ is a connected bimonoid, then there is an antipode on $\overline{h}$ that makes it a Hopf monoid.
\end{prop}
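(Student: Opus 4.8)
The plan is to prove this via the standard Takeuchi-type argument, using the convolution algebra structure on $\Hom(\overline{h}, \overline{h})$. First I would recall that the set of species morphisms $\overline{h} \to \overline{h}$ (more precisely, the natural transformations, viewed degreewise) forms an algebra under the convolution product $f * g = \mu \circ (f \cdot g) \circ \Delta$, with unit $u = \iota \circ \epsilon$. An antipode is by definition a two-sided inverse of the identity map $\id_{\overline{h}}$ in this convolution algebra, so the whole task reduces to showing $\id_{\overline{h}}$ is invertible for convolution when $\overline{h}$ is connected.

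The key step is to exploit connectedness to run an inductive/recursive construction on the finite sets $I$. Since $\dim \overline{h}[\emptyset] = 1$, on the empty set the counit and unit identify $\overline{h}[\emptyset]$ with $\mathbb{K}$, and we set $s_\emptyset = \id$. For a nonempty finite set $I$, I would write out the defining equation $\mu \circ (\id_{\overline{h}} \cdot s) \circ \Delta = \iota \circ \epsilon$ evaluated on $\overline{h}[I]$, expanding the middle term over the decompositions $I = S \sqcup T$ using that $\Delta$ is determined by the maps $\Delta_{S,T}$. Isolating the term $S = I$, $T = \emptyset$ (where $\Delta_{I,\emptyset}$ and $\mu_{I,\emptyset}$ act as identities up to the unit identification), this gives
\begin{equation}
s_I = -\sum_{\substack{I = S \sqcup T \\ T \neq \emptyset}} \mu_{S,T} \circ (\id_{\overline{h}[S]} \otimes s_T) \circ \Delta_{S,T} \,,
\end{equation}
where each $s_T$ on the right has $T$ strictly smaller than $I$, so this recursion terminates. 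This defines $s$ as a collection of linear maps; one then checks it is a natural transformation (it commutes with the bijection-relabelling because $\mu$, $\Delta$ are natural and the formula is built only from them) and that it is a left inverse of $\id_{\overline{h}}$ by construction.

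The remaining work is to verify that this $s$ is also a \emph{right} inverse, i.e.\ $\mu \circ (s \cdot \id_{\overline{h}}) \circ \Delta = \iota \circ \epsilon$. The clean way is the general algebra fact: in any (unital, associative) algebra, if an element has a left inverse and a right inverse then they coincide and the element is two-sided invertible. So I would symmetrically define a map $s'$ by the mirror recursion $s'_I = -\sum_{S \neq \emptyset,\, I = S\sqcup T} \mu_{S,T}\circ (s'_S \otimes \id_{\overline{h}[T]})\circ \Delta_{S,T}$, note that $s'$ is a right convolution-inverse of $\id_{\overline{h}}$, and conclude $s = s'$ by associativity of convolution ($s = s * u = s * (\id * s') = (s * \id) * s' = u * s' = s'$). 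The associativity of the convolution product itself rests on coassociativity of $\Delta$, associativity of $\mu$, and the compatibility diagram \eqref{cd:bimonoid}; I would cite \cite[Section 8.2--8.3]{aguiar10} for that rather than reprove it.

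The main obstacle I anticipate is purely bookkeeping: making the finite-set recursion and the naturality check precise in the species formalism, and being careful with the unit identifications $\overline{h}[\emptyset] \cong \mathbb{K}$ that let the ``$T = \emptyset$'' term of the coproduct behave as the identity. There is no genuine conceptual difficulty — this is the standard proof that connected bialgebras are Hopf — so I would keep the exposition brief and lean on the convolution-algebra framework already set up in \cite{aguiar10}.
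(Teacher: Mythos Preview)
The paper does not prove this proposition at all: it is stated with the attribution ``Proposition 8.10 in \cite{aguiar10}'' and no proof is given, so there is nothing in the paper to compare your argument against. Your plan is the standard Takeuchi recursion and is correct; it is essentially what one finds in \cite{aguiar10}, so your write-up would serve as an expanded version of the cited result rather than an alternative to anything in this paper.
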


\begin{smpl}[Hopf monoids]\leavevmode
\begin{itemize}
\item The exponential vector species can be endowed with a trivial product and coproduct. This is a connected bimonoid, hence it is a Hopf monoid.

\item The vector space $\hs{G}[I]$ (resp. $\hs{Pos}, \hs{GP}$ and $\hs{HGP} $ has a basis given by all graphs on the vertex set $I$ (resp. partial orders on the set $I$, generalized permutahedra in $\mathbb{R}^I$, hypergraphic polytopes in $\mathbb{R}^I$) defines a Hopf monoid with the operations introduced in \cref{ch2}.

\end{itemize}
\end{smpl}

\subsection{Combinatorial Hopf monoids}

The notion of characters in Hopf monoids was already brought to light in \cite{aguiar17}, where it is used to settle, for instance, a conjecture of Humpert and Martin \cite{humpert12} on graphs.

\begin{defin}
Let $\overline{h}$ be a Hopf monoid.
A \textit{Hopf monoid character} $\eta:\overline{h} \Rightarrow \overline{E}$, or simply a \textit{character}, is a monoid morphism such that $\eta_{\emptyset } = \epsilon_{\emptyset }$ and the following diagram commutes:
\begin{equation}\label{cd:chars}
\begin{tikzcd}
\overline{h}[I]\otimes \overline{h}[J]
\arrow[r, "{\mu_{I, J}}"] 
\arrow[d, "{\eta_I \otimes \eta_J}"] 
& 
\overline{h}[A]
\arrow[d, "{ \eta_A }"] \\
\mathbb{K} \otimes \mathbb{K}
\arrow[r, "{ \cong }"]
&
\mathbb{K}
\end{tikzcd}
\end{equation}

A \textit{combinatorial Hopf monoid} is a pair $(\overline{h}, \eta)$ where $\overline{h} $ is a Hopf monoid, and $\eta$ a character of $\overline{h}$.
\end{defin}

The condition that $\eta $ and $\epsilon $ coincide in the $\emptyset $ level is commonly verified in Hopf monoids of combinatorial objects.
In particular, this condition is always verified in connected Hopf monoids.

\begin{smpl}[Combinatorial Hopf monoids]\label{smpl:CHS}
From the examples on Hopf monoids above and the characters defined in \cref{ch2}, we can construct combinatorial Hopf monoids: in $\hs{G}$ with the character $\eta(G)= \mathbb{1}[G \text{ has no edges}]$, in $\hs{Pos}$ with the character $\eta(P) = \mathbb{1}[P \text{ is antichain}]$, and in $\hs{GP}$ with the character given by $\eta(\mathfrak{q}) = \mathbb{1}[\mathfrak{q} \text{ is a point }]$.
\end{smpl}

A combinatorial Hopf monoid morphism $\alpha:(\overline{h}_1, \eta_1) \Rightarrow (\overline{h}_2, \eta_2) $ is a Hopf monoid morphism $\alpha: \overline{h}_1 \Rightarrow \overline{h}_2$ such that the following diagram commutes:

\begin{equation}\label{cd:chamorph}
\begin{tikzcd}
\overline{h}_1
\arrow[rr, bend left, "{\alpha }"]
\arrow[rd, "{\eta_1}"]
&
&
\overline{h}_2
\arrow[ld, "{\eta_2}"]\\
&
\overline{E}
&
\end{tikzcd}
\end{equation}

We introduce the Fock functors, that give us a construction of several graded Hopf algebras from a Hopf monoid and, more generally, construct graded vector spaces from vector species.
The topic is carefully developed in \cite[Section 3.1, Section 15.1]{aguiar10}.

\begin{defin}[Fock functors]
Denote by $gVec_{\mathbb{K}}$ the category of graded vector spaces over $\mathbb{K}$.
We focus on the following Fock functors $\mathcal{K}, \overline{\mathcal{K}}: Sp \to gVec_{\mathbb{K}}$, called full Fock functor and bosonic Fock functor, respectively, defined as:

$$\fff(q) := \bigoplus_{n\geq 0} q [ \{1, \dots , n\} ] \text{  and  } \bff(q) :=  \bigoplus_{n\geq 0} q [\{1, \dots , n\} ]_{S_n}\, , $$
where $V_{S_n}$ stands for the vector space of coinvariants on $V$ over the action of $S_n$, i.e. the quotient of $V$ under all relations of the form $x - \sigma(x)$, for $\sigma \in S_n$.

If $\overline{h}$ is a combinatorial Hopf monoid with structure morphisms $\mu, \iota, \Delta, \epsilon$, then $\fff(\overline{h}) $ and $\bff(\overline{h} )$ are Hopf algebras with related structure maps.
If $\eta $ is a character of $\overline{h}$, then $\fff(\overline{h}) $ and $\bff(\overline{h} )$ also have a character.
\end{defin}

\begin{smpl}[Fock functors of some Hopf monoids]\leavevmode
\begin{itemize}

\item The Hopf algebra $\fff(\overline{I})$ is the linear Hopf algebra $\mathbb{K}$.
The Hopf algebra $\fff(\overline{E})$ is the polynomial Hopf algebra $\mathbb{K}[x]$.

\item The Hopf algebras $\fff(\hs{G} )$, $\fff(\hs{Pos})$ and $\fff(\hs{GP})$ are the Hopf algebras of graphs $\mathbf{G}$, of posets $\mathbf{Pos}$ and of generalized permutahedra $\mathbf{GP}$ introduced above.

\item The Hopf algebra $\fff(\hs{HGP} )$ is the Hopf algebra $\mathbf{HGP}$, the Hopf subalgebra of $\mathbf{GP}$ introduced above.

\end{itemize}
\end{smpl}

\subsection{The word quasisymmetric function combinatorial Hopf monoid}

Recall that a coloring of a set $I$ is a map $f: I  \to \mathbb{N}$, and that $\mathfrak{C}_I$ be the set of colorings of $I$.
Recall as well that a set composition $\opi = S_1| \cdots | S_l$ can be identified with a total preorder $R_{\opi} $, where we say $a\, R_{\opi}\, b $ if $a\in S_i$ and $b\in S_j$ satisfy $i \leq j$.
For a set composition $\opi$ of $A$ and a non-empty subset $I \subseteq A$, we define $\opi|_I$ as the 
set composition of $I$ obtained by restricting the preorder $R_{\opi} $ to $I$.

If $I, J$ are disjoint sets, and $f\in \mathfrak{C}_I$ and $g\in \mathfrak{C}_J$, then we set $f*g \in \mathfrak{C}_{I \sqcup J}$ as the unique coloring in $I\sqcup J$ that satisfies both $f*g|_I = f$ and $f*g|_J = g$.

For a set composition $\opi \in \oPi_I$, let $\mathbb{M}_{\opi} = \sum\nolimits_{\substack{f \in \mathfrak{C}_I \\ \opi(f) = \opi}} [f] $ be a formal sum of colorings, and define $\hs{WQSym }[I]$ as the span of $\{ \mathbb{M}_{\opi } \}_{\opi \in \oPi_I}$.
This gives us a $\mathbb{K}$-linear space with basis enumerated by $\oPi_I$, so that $\hs{WQSym} $ is a species.

Further, define the monoidal product operation with 
\begin{equation}\label{eq:multwqsym}
\mathbb{M}_{\opi} \cdot_{A, B} \mathbb{M}_{\otau} = \sum_{\substack{f \in \mathfrak{C}_A \\ \opi(f) = \opi}} \sum_{\substack{g \in \mathfrak{C}_B \\ \opi(g) = \otau}} [f*g] = \sum_{\substack{\olambda|_A = \opi \\ \olambda|_B = \otau}} \mathbb{M}_{\olambda }\, .
\end{equation}

We write $I \, <_{\opi} \, J$ whenever there is no $i \in I$ and $j \in J$ such that $j \, R_{\opi } \, i$.
The coproduct $\Delta_{I, J} M_{\opi } $ is defined as
$$\mathbb{M}_{\opi|_I} \otimes \mathbb{M}_{\opi|_J}\, ,$$ 
whenever $I \, <_{\opi} \, J$, and is zero otherwise.

If we set the unit as $\iota_{\emptyset}(1) = \mathbb{M}_{\emptyset } $ and the counit acting on the basis as $\epsilon( \mathbb{M}_{\opi  })  = \mathbb{1}[\opi = \emptyset]$ we get a Hopf monoid.
In fact, this is the dual Hopf monoid of faces $\Sigma^* =\Sigma_1^* $ in \cite{aguiar10}.

\begin{prop}[\citeaguHM  ]
With these operations, the vector species $\hs{WQSym}$ becomes a Hopf monoid.
\end{prop}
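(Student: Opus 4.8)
The plan is to verify the bimonoid axioms of \cite[Sections 8.2--8.3]{aguiar10} directly on the monomial basis $\{\mathbb{M}_{\opi}\}_{\opi \in \oPi_I}$, and then to deduce the existence of an antipode from connectedness. First one checks that the four structure maps are well defined on this basis: in \eqref{eq:multwqsym} the colorings $f*g$ with $\opi(f) = \opi$ and $\opi(g) = \otau$ are precisely the colorings $h$ of $A \sqcup B$ with $\opi(h)|_A = \opi$ and $\opi(h)|_B = \otau$, so $\mathbb{M}_{\opi}\cdot_{A,B}\mathbb{M}_{\otau}$ is a sum of basis elements of $\hs{WQSym}[A\sqcup B]$, and the coproduct is well defined because $\opi(f)|_I$ depends only on $\opi(f)$ and the initial-segment condition $I <_{\opi} J$ depends only on $\opi$. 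Associativity of $\mu$ is then immediate from $(f*g)*h = f*(g*h)$ for colorings with pairwise disjoint domains. Coassociativity of $\Delta$ reduces, for a triple splitting $A = I \sqcup J \sqcup K$, to two facts about total preorders: restriction is transitive, $(\opi|_{I \sqcup J})|_{I} = \opi|_{I}$, and "being an initial segment" is transitive, so that $I <_{\opi} J \sqcup K$ together with $J <_{\opi} K$ is equivalent to $I <_{\opi} J$, $J <_{\opi} K$ and $I <_{\opi} K$; both sides of the coassociativity equation then equal $\mathbb{M}_{\opi|_I}\otimes\mathbb{M}_{\opi|_J}\otimes\mathbb{M}_{\opi|_K}$ when these hold and vanish otherwise. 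The unit and counit axioms are routine, since $\mathbb{M}_{\emptyset}$ is a two-sided unit ($\emptyset * f = f$) and $\epsilon(\mathbb{M}_{\opi}) = \mathbb{1}[\opi = \emptyset]$ is forced by the grading.

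The main obstacle is the bimonoid compatibility square \eqref{cd:bimonoid}. Fixing $I = R \sqcup U$, $J = T \sqcup V$, $S = I \sqcup J$, $M = R \sqcup T$, $N = U \sqcup V$, one must show that forming $\mathbb{M}_{\opi}\cdot_{I,J}\mathbb{M}_{\otau}$ and then applying $\Delta_{M,N}$ agrees with applying $\Delta_{R,T}\otimes\Delta_{U,V}$, twisting, and then multiplying with $\mu_{R,U}\otimes\mu_{T,V}$. On colorings this becomes a set-theoretic identity: for $f \in \mathfrak{C}_I$, $g \in \mathfrak{C}_J$ one has $(f*g)|_{M} = f|_{R} * g|_{T}$ and $(f*g)|_{N} = f|_{U} * g|_{V}$; the delicate point is the bookkeeping on the initial-segment conditions, namely that $M <_{\opi(f*g)} N$ is equivalent to the conjunction of $R <_{\opi(f)} U$ and $T <_{\opi(g)} V$. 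This requires describing the preorder $R_{\opi(f*g)}$ on $I \sqcup J$ as the interleaving of $R_{\opi(f)}$ and $R_{\opi(g)}$ obtained by comparing color values across the two domains, and then checking that the twist in the lower horizontal arrow matches the identifications used; I expect this to be the longest but entirely elementary case of the argument.

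Once \eqref{cd:bimonoid} and the remaining coherence relations are in place, $\hs{WQSym}$ is a bimonoid, and since $\hs{WQSym}[\emptyset]$ has the single basis element $\mathbb{M}_{\emptyset}$ it is connected; \cref{prop:takeuchi} then furnishes an antipode and makes $\hs{WQSym}$ a Hopf monoid. Alternatively, one may bypass the direct verification of associativity, coassociativity and the compatibility square by exhibiting the natural pairing of $\hs{WQSym}$ with the Hopf monoid of faces $\Sigma$ of \citeaguHM, checking level by level that it is a nondegenerate Hopf-monoid pairing, and concluding that $\hs{WQSym} \cong \Sigma^{*}$ acquires its Hopf monoid structure by duality; this is the route underlying the citation.
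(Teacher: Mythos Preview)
The paper does not prove this proposition at all: the bracket \citeaguHM\ in the header signals that the result is taken from Aguiar--Mahajan, and immediately before the statement the paper remarks ``In fact, this is the dual Hopf monoid of faces $\Sigma^* = \Sigma_1^*$ in \cite{aguiar10}.'' So the paper's ``proof'' is precisely your alternative route: identify $\hs{WQSym}$ with the dual of the Hopf monoid of faces from \cite{aguiar10} and import the Hopf monoid structure by duality. Your direct axiom-by-axiom verification is therefore a genuinely different (and more self-contained) approach; it buys independence from the Aguiar--Mahajan machinery at the cost of a longer case check, while the paper's citation buys brevity.

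One technical caution on your direct route. In the compatibility square you assert that $M <_{\opi(f*g)} N$ is \emph{equivalent} to the conjunction of the two restricted initial-segment conditions. The forward implication is fine, but the converse fails for a fixed pair $(f,g)$: knowing that $R$ precedes its complement in $I$ under $f$, and similarly in $J$ under $g$, says nothing about the cross-comparisons (an $f$-color in $R$ versus a $g$-color in $V$, etc.). What actually makes the diagram commute is not this pointwise equivalence but a bijection at the level of set compositions: when $R <_{\opi} T$ and $U <_{\otau} V$ hold, the set of $\olambda$ with $\olambda|_I = \opi$, $\olambda|_J = \otau$ and $M <_{\olambda} N$ is in bijection with pairs $(\alpha,\beta)$ of quasishuffles on $M$ and $N$ via $\olambda \mapsto (\olambda|_M, \olambda|_N)$ and $(\alpha,\beta) \mapsto \alpha|\beta$. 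Also note your labeling of the square does not match the paper's diagram \eqref{cd:bimonoid}: there $I = R \sqcup T$ and $J = U \sqcup V$ (since $\Delta_{R,T}$ acts on $\overline{a}[I]$), not $I = R \sqcup U$. These are easy to fix, but as written that step would not go through.
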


%The next proposition tells us that this is a natural extension of $\mathbf{WQSym}$ into the vector species, as when we apply the Fock functor $\mathcal{K}$ we obtain the word quasisymmetric functions Hopf algebra, also called $P\Pi$ in \cite{aguiar10}.

\begin{prop}[\citeaguWQ]
The Hopf algebra $\mathcal{K}(\hs{WQSym})$ is the Hopf algebra on word quasisymmetric functions $\mathbf{WQSym}$, and $\bff(\hs{WQSym})$ is the Hopf algebra on quasisymmetric functions $QSym$.
\end{prop}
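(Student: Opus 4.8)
The plan is to unwind the definitions of the two Fock functors applied to $\hs{WQSym}$ and match the resulting graded Hopf algebras against the monomial presentations of $\mathbf{WQSym}$ and $QSym$; this is exactly the content of \citeaguWQ, and what follows is a sketch of that identification. Throughout, the key inputs are the basis $\{\mathbb{M}_{\opi}\}_{\opi \in \oPi_n}$ of $\hs{WQSym}[\{1,\dots,n\}]$, the product formula \eqref{eq:multwqsym}, the coproduct $\Delta_{I,J}\mathbb{M}_{\opi} = \mathbb{M}_{\opi|_I}\otimes\mathbb{M}_{\opi|_J}$ when $I<_{\opi}J$ (zero otherwise), and the fact that $\hs{WQSym}$ sends a bijection to the obvious basis-permuting isomorphism.

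\textbf{The full Fock functor.} First I would compute $\fff(\hs{WQSym}) = \bigoplus_{n\ge 0}\hs{WQSym}[\{1,\dots,n\}]$, which has $\mathbb{K}$-basis $\{\mathbb{M}_{\opi} : \opi\in\oPi_n,\ n\ge 0\}$, so that $\mathbb{M}_{\opi}\mapsto \mathbf{M}_{\opi}$ is a graded linear isomorphism onto $\mathbf{WQSym}$. It remains to see this is a bialgebra (hence Hopf) morphism. On products: by definition of $\fff$, the product of $\mathbb{M}_{\opi}$ $(\opi\in\oPi_k)$ and $\mathbb{M}_{\otau}$ $(\otau\in\oPi_l)$ is $\mathbb{M}_{\opi}\cdot_{[k],[k+1,k+l]}(\mathrm{shift}_{*}\mathbb{M}_{\otau})$, and plugging this into \eqref{eq:multwqsym} gives $\sum \mathbb{M}_{\olambda}$ over set compositions $\olambda$ of $[k+l]$ restricting to $\opi$ on $[k]$ and to the shifted $\otau$ on $[k+1,k+l]$ — exactly the quasi-shuffle of set compositions that defines the product on the monomial basis of $\mathbf{WQSym}$. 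On coproducts: the coproduct of $\fff(\hs{WQSym})$ is $\sum_{j=0}^{n}(\mathrm{id}\otimes\mathrm{std}_{*})\,\Delta_{[j],[j+1,n]}(\mathbb{M}_{\opi})$, and since $\Delta_{[j],[j+1,n]}\mathbb{M}_{\opi}$ is $\mathbb{M}_{\opi|_{[j]}}\otimes\mathbb{M}_{\opi|_{[j+1,n]}}$ precisely when $[j]<_{\opi}[j+1,n]$ and vanishes otherwise, after relabelling both factors to initial segments this is the deconcatenation coproduct $\Delta\mathbf{M}_{\opi} = \sum\mathbf{M}_{\opi'}\otimes\mathbf{M}_{\opi''}$ of $\mathbf{WQSym}$. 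Compatibility of unit and counit is immediate from $\iota_{\emptyset}(1)=\mathbb{M}_{\emptyset}$ and $\epsilon(\mathbb{M}_{\opi})=\mathbb{1}[\opi=\emptyset]$.

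\textbf{The bosonic Fock functor.} Next I would pass to $\bff(\hs{WQSym}) = \bigoplus_{n\ge 0}\hs{WQSym}[\{1,\dots,n\}]_{S_n}$, the quotient by all relations $x-\sigma(x)$. Since $S_n$ acts on the basis $\{\mathbb{M}_{\opi}\}_{\opi\in\oPi_n}$ by permuting $[n]$, and two set compositions lie in the same orbit exactly when they have the same composition type $\alpha(\opi)$, the coinvariant space has basis indexed by $\mathcal{C}_n$, with the class of $\mathbb{M}_{\opi}$ mapping to the monomial quasisymmetric function $M_{\alpha(\opi)}$. By functoriality of $\bff$ the product and coproduct of $\fff(\hs{WQSym})$ descend to the quotient; tracing \eqref{eq:multwqsym} and the deconcatenation coproduct through yields respectively the quasi-shuffle product and the deconcatenation coproduct $\Delta M_{\alpha}=\sum_{\alpha=\beta\gamma}M_{\beta}\otimes M_{\gamma}$ on integer compositions, which is the monomial presentation of $QSym$ from \cite{gessel84}. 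Hence $\bff(\hs{WQSym})\cong QSym$.

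The only delicate point, and the one I expect to be the main obstacle, is bookkeeping: one must carefully track the standardisation isomorphisms built into the Fock functors and verify that the condition $I<_{\opi}J$ governing when $\Delta_{I,J}$ is nonzero corresponds exactly to the admissible splittings appearing in the coproduct on the monomial basis of $\mathbf{WQSym}$ (and, after passing to $S_n$-coinvariants, to genuine concatenation factorisations of an integer composition). Once this dictionary between set-composition data and word/quasisymmetric combinatorics is set up precisely, the verifications are routine, and the statement is exactly \citeaguWQ.
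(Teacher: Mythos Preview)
The paper does not actually prove this proposition: it is stated with a citation to \cite[Section 17.3.1]{aguiar10}, and the only content the paper adds is the one-line description of the identification $\mathbb{M}_{\opi}\leftrightarrow\mathbf{M}_{\opi}$ via colorings $f\mapsto \mathbf{a}_f$. Your sketch therefore goes well beyond what the paper does, and is in spirit the standard verification from \cite{aguiar10}.

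There is one slip worth flagging. You write the coproduct of $\fff(\hs{WQSym})$ as $\sum_{j=0}^{n}(\mathrm{id}\otimes\mathrm{std}_{*})\,\Delta_{[j],[j+1,n]}(\mathbb{M}_{\opi})$, but the full Fock functor coproduct is a sum over \emph{all} ordered decompositions $[n]=I\sqcup J$, with both tensor factors relabelled to initial segments; compare the graph coproduct $\Delta G = \sum_{[n]=I\sqcup J} rl_{I,[\#I]}(G|_I)\otimes rl_{J,[\#J]}(G|_J)$ recalled in \cref{sec:GPos}. The condition $I<_{\opi}J$ governing nonvanishing of $\Delta_{I,J}\mathbb{M}_{\opi}$ then forces $I$ to be a union $S_1\cup\cdots\cup S_j$ of an initial string of blocks of $\opi$, not an initial segment $[j]$ of $[n]$; after standardising both factors this is exactly the deconcatenation coproduct on $\mathbf{WQSym}$. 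With this correction your argument goes through, and the bosonic part is handled correctly.
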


The identification is as follows: $\mathcal{K}(\hs{WQSym})$ and $\mathbf{WQSym}$ by identifying a coloring $f:[n] \to \mathbb{N}$ with the noncommutative monomial $\prod_{i=1}^n a_{f(i)} =: a_f$, and extend this to identify $\mathbb{M}_{\opi } $ with $\mathbf{M}_{\opi}$.

\begin{prop}[Combinatorial Hopf monoid on $\hs{WQSym}$]
Take the character $\eta:\hs{WQSym} \Rightarrow \lhs$ defined in the basis elements as
\begin{equation}\label{eq:charWQdefin}
\eta_0[I](\mathbb{M}_{\opi}) = \mathbb{1}[l(\opi) \leq 1]\, .
\end{equation}

This turns $(\hs{WQSym}, \eta_0)$ into a combinatorial Hopf monoid.
\end{prop}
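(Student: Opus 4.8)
The plan is to verify that $\eta_0$ meets the three defining requirements of a character of the Hopf monoid $\hs{WQSym}$ (whose bimonoid structure was already established above): that it is a morphism of species $\hs{WQSym}\Rightarrow\lhs$, that it agrees with the counit on the empty set (equivalently, preserves the unit), and that it is multiplicative, i.e.\ makes diagram \eqref{cd:chars} commute. The first two points are immediate. For naturality, a bijection $\sigma\colon I\to I'$ sends $\mathbb{M}_{\opi}$ to $\mathbb{M}_{\sigma(\opi)}$ and $l(\sigma(\opi))=l(\opi)$, whereas $\lhs$ sends every morphism to the identity, so $\eta_0$ intertwines the two actions. For the empty set, $l(\vec{\emptyset})=0\le 1$, hence $\eta_0[\emptyset](\mathbb{M}_{\emptyset})=1=\epsilon_{\emptyset}(\mathbb{M}_{\emptyset})$, which is also precisely unit preservation.

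The substantive step is multiplicativity. I would fix disjoint sets $A,B$ and set compositions $\opi\in\oPi_A$, $\otau\in\oPi_B$, expand the product using \eqref{eq:multwqsym} as $\mathbb{M}_{\opi}\cdot_{A,B}\mathbb{M}_{\otau}=\sum_{\olambda}\mathbb{M}_{\olambda}$ over set compositions $\olambda$ of $A\sqcup B$ with $\olambda|_A=\opi$ and $\olambda|_B=\otau$, apply the linear functional $\eta_0$, and thereby reduce the commutativity of \eqref{cd:chars} to the numerical identity
\[
\sum_{\substack{\olambda|_A=\opi \\ \olambda|_B=\otau}} \mathbb{1}\bigl[l(\olambda)\le 1\bigr] \;=\; \mathbb{1}\bigl[l(\opi)\le 1\bigr]\,\mathbb{1}\bigl[l(\otau)\le 1\bigr].
\]
The key observation is that restricting the total preorder $R_{\olambda}$ to a subset cannot increase its number of levels, so $l(\olambda)\ge\max\{l(\opi),l(\otau)\}$ for every $\olambda$ appearing on the left. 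Hence if $l(\opi)\ge 2$ or $l(\otau)\ge 2$ every summand vanishes and both sides are $0$. If instead $l(\opi)\le 1$ and $l(\otau)\le 1$, then when one of $A,B$ is empty the product collapses to a single $\mathbb{M}$ by unitality and both sides equal $1$; otherwise $\opi=(A)$ and $\otau=(B)$ with $A,B$ nonempty, and the set compositions $\olambda$ satisfying $\olambda|_A=(A)$ and $\olambda|_B=(B)$ are exactly $(A\mid B)$, $(B\mid A)$, $(A\sqcup B)$, of which only $(A\sqcup B)$ has length $\le 1$, so the left-hand side is again $1$.

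I do not anticipate a genuine obstacle: the argument is a short case analysis once one has the monotonicity of $l$ under restriction of preorders, which is the single point that forces all the ``extra'' quasi-shuffle terms to contribute $0$; the final enumeration of the three interleavings of two one-block compositions is elementary. Assembling these, \eqref{cd:chars} holds, so $\eta_0$ is a character and $(\hs{WQSym},\eta_0)$ is a combinatorial Hopf monoid.
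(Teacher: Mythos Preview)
Your proof is correct and follows essentially the same approach as the paper: both reduce multiplicativity on basis elements to the identity $\sum_{\olambda}\mathbb{1}[l(\olambda)\le 1]=\mathbb{1}[l(\opi)\le 1]\,\mathbb{1}[l(\otau)\le 1]$, use the monotonicity $l(\olambda)\ge\max\{l(\opi),l(\otau)\}$ under restriction to dispose of the vanishing cases, and then observe that among the quasi-shuffles only the single-block composition survives. Your treatment is slightly more explicit in enumerating the three shuffles and in handling the degenerate case where one of $A,B$ is empty, but the argument is the same.
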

\begin{proof}
We write $\eta_{0, I} = \eta_0[I]$ for short.
That $\eta_0$ is a natural transformation is trivial, and also $\eta_{0, \emptyset}(\mathbb{M}_{ \emptyset}) = 1$, so it preserves the unit.

To show that $\eta_0$ is multiplicative, we just need to check that the diagram \eqref{cd:chars} commutes for the basis elements, i.e. if $A = I \sqcup J$, then
\begin{equation}\label{eq:proofCHA}
\eta_{0, I}(\mathbb{M}_{\opi }) \eta_{0, J}(\mathbb{M}_{\otau })  = \eta_{0, A}(\mathbb{M}_{\opi }\mathbb{M}_{\otau } ) = \sum_{\substack{\ogamma \in \oPi_A \\ \ogamma|_I = \opi \\  \ogamma|_J = \otau }} \eta_{0, A}(\mathbb{M}_{\ogamma }) \, .
\end{equation}

Note that if $\ogamma $ is a set composition of $A$ such that $\ogamma|_I = \opi $, then trivially we have that $l(\opi ) \leq l(\ogamma )$, so from \eqref{eq:charWQdefin}, $\eta_{0, I}(\mathbb{M}_{\opi }) = 0 \Rightarrow \eta_{0, A}(\mathbb{M}_{\ogamma } ) = 0 $.
Similarly, if $\ogamma|_J = \otau$, we have $\eta_{0, J}(\mathbb{M}_{\otau }) = 0 \Rightarrow \eta_{0, A}(\mathbb{M}_{\ogamma } ) = 0$.

So it is enough to consider the case where $\eta_{0, I}(\mathbb{M}_{\opi })= \eta_{0, J}(\mathbb{M}_{\otau }) = 1 $, i.e. $l(\opi), l(\otau) \leq 1$.
Now, if $\gamma $ has only one part, it does indeed hold that $\gamma|_I = \opi $ and $\gamma|_J = \otau $, so there is a unique $\ogamma $ on the right hand side of \eqref{eq:proofCHA} that satisfies $l(\ogamma ) \leq 1$, and this concludes the proof.
\end{proof}

\subsection{Universality of WQSym}

The following theorem is the main theorem of this section.
For connected Hopf monoids, this is a corollary of \cite[Theorem 11.23]{aguiar10}.

\begin{thm}[Terminal object in combinatorial Hopf monoids]\label{thm:universalhs}
Given a Hopf monoid $\overline{h} $ with a character $\eta: \overline{h} \Rightarrow \lhs$, there is a unique combinatorial Hopf monoid morphism $\uhsm_{\overline{h}}:\overline{h} \Rightarrow \hs{WQSym}$, i.e. a unique Hopf monoid morphism $\uhsm_{\overline{h}}$ such that the following diagram commutes:

\begin{equation}\label{cd:uhsm}
\begin{tikzcd}
\overline{h}
\arrow[rr, "{\boldsymbol{\Upsilon }_{\overline{h}} }"] 
\arrow[rd, "{ \eta }"] 
& 
&
\hs{WQSym}
\arrow[dl, "{ \eta_0 }"] \\
&
\lhs
&
\end{tikzcd}
\end{equation}
\end{thm}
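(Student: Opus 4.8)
The plan is to build $\boldsymbol{\Upsilon}_{\overline{h}}$ explicitly on basis-free terms, level by level, using the structure morphisms of $\overline{h}$ and the character $\eta$, and then to verify both that it is the unique candidate and that it is a morphism of Hopf monoids compatible with the characters. The guiding idea is the same "take iterated coproducts, apply the character to each piece, sum over set compositions" recipe that produces $\Psi_{\mathbf{h}}$ in the Aguiar--Bergeron--Sottile setting, but now carried out in species so that the output is indexed by \emph{set} compositions rather than integer compositions. Concretely, for a finite set $I$ and $x \in \overline{h}[I]$, I would set
\begin{equation}\label{eq:uhsmformula}
\boldsymbol{\Upsilon}_{\overline{h}}[I](x) = \sum_{\opi = S_1|\cdots|S_k \,\models\, I} \big(\eta_{S_1}\otimes\cdots\otimes\eta_{S_k}\big)\big(\Delta_{S_1,\dots,S_k}(x)\big)\,\mathbb{M}_{\opi}\,,
\end{equation}
where $\Delta_{S_1,\dots,S_k}$ is the iterated coproduct (well defined by coassociativity) and we read $\eta_S$ as landing in $\mathbb{K} = \lhs[S]$. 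The empty-set case is forced by $\eta_{\emptyset} = \epsilon_{\emptyset}$ and connectedness of $\hs{WQSym}$ at $\emptyset$.

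First I would prove \emph{uniqueness}. Suppose $\alpha : \overline{h} \Rightarrow \hs{WQSym}$ is any combinatorial Hopf monoid morphism. Writing $\alpha[I](x) = \sum_{\opi} c_{\opi}(x)\,\mathbb{M}_{\opi}$, I would extract the coefficients by post-composing with the coproduct of $\hs{WQSym}$ and the character $\eta_0$: applying $\Delta_{S_1,\dots,S_k}$ to $\alpha[I](x)$ and then $\eta_0$ on each tensor factor kills every $\mathbb{M}_{\opi}$ except those refining the set composition $S_1|\cdots|S_k$ in the appropriate sense, and by naturality and the fact that $\eta_0(\mathbb{M}_{\otau}) = \mathbb{1}[l(\otau)\le 1]$ this isolates $c_{S_1|\cdots|S_k}(x)$. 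Since $\alpha$ intertwines coproducts and characters ($\eta_0 \circ \alpha = \eta$), this computation forces $c_{S_1|\cdots|S_k}(x) = (\eta_{S_1}\otimes\cdots\otimes\eta_{S_k})(\Delta_{S_1,\dots,S_k}(x))$, i.e.\ $\alpha$ must equal the map \eqref{eq:uhsmformula}. The coproduct structure of $\hs{WQSym}$ being "deconcatenation of set compositions along $<_{\opi}$" is exactly what makes this extraction work.

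Then I would verify \emph{existence}: that the map \eqref{eq:uhsmformula} is a natural transformation (immediate, since $\Delta$ and $\eta$ are natural), is multiplicative (here one uses the bimonoid compatibility diagram \eqref{cd:bimonoid} for $\overline{h}$ together with the product formula \eqref{eq:multwqsym} for $\hs{WQSym}$, expressing $\Delta_{\text{iterated}}\circ\mu$ as a sum over shuffles and matching it with the sum over $\olambda$ with $\olambda|_A = \opi$, $\olambda|_B = \otau$), is comultiplicative (this uses coassociativity of $\Delta$ in $\overline{h}$ against the deconcatenation coproduct of $\hs{WQSym}$, where the condition $I <_{\opi} J$ precisely selects which iterated coproducts survive), preserves unit and counit (routine, from $\eta_{\emptyset}=\epsilon_{\emptyset}$), and satisfies $\eta_0 \circ \boldsymbol{\Upsilon}_{\overline{h}} = \eta$ (direct: applying $\eta_0$ to \eqref{eq:uhsmformula} keeps only the one-block term $\opi = I$, whose coefficient is $\eta_I(x)$). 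Compatibility with antipodes is then automatic since a bimonoid morphism between Hopf monoids always commutes with the antipode.

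The main obstacle I expect is the \emph{comultiplicativity} check: one must show that applying $\Delta_{I,J}$ of $\hs{WQSym}$ to $\boldsymbol{\Upsilon}_{\overline{h}}[I\sqcup J](x)$ equals $(\boldsymbol{\Upsilon}_{\overline{h}}[I]\otimes\boldsymbol{\Upsilon}_{\overline{h}}[J])(\Delta_{I,J}(x))$. The left side involves summing over set compositions $\opi$ of $I\sqcup J$ with $I <_{\opi} J$ and restricting, while the right side involves first splitting $x$ along $I\sqcup J$ and then taking independent iterated coproducts; reconciling the two requires a careful bookkeeping of how an iterated coproduct of $\overline{h}$ indexed by a set composition of $I\sqcup J$ that "respects" the splitting factors as a product of iterated coproducts on $I$ and on $J$ — this is exactly the coassociativity/compatibility content and is where the argument is least mechanical. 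Everything else is bookkeeping of set compositions and their restrictions. As the excerpt notes, for connected $\overline{h}$ this is already contained in \cite[Theorem 11.22]{aguiar10}, so the only genuinely new content is that the hypothesis $\eta_{\emptyset} = \epsilon_{\emptyset}$ (rather than full connectedness) suffices — and this is used only at the $\emptyset$ level, where it guarantees the $\emptyset$-component of $\boldsymbol{\Upsilon}_{\overline{h}}$ is the canonical identification $\overline{h}[\emptyset]\to\mathbb{K}=\hs{WQSym}[\emptyset]$.
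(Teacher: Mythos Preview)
Your proposal is correct and follows essentially the same route as the paper: define $\boldsymbol{\Upsilon}_{\overline{h}}$ by the formula \eqref{eq:uhsmformula} (the paper writes the coefficient as $f_{\opi,\eta} := \eta[\opi]\circ\Delta_{\opi}$), extract coefficients via iterated coproduct and $\eta_0$ to prove uniqueness, and verify the morphism axioms coefficientwise.

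One correction on where the work lies: you have the difficulty backwards. Comultiplicativity is the easy check --- the set compositions $\opi$ of $I\sqcup J$ with $I<_{\opi}J$ are exactly the concatenations $(\opi',\otau')$, and then $(f_{\opi',\eta}\otimes f_{\otau',\eta})\circ\Delta_{I,J}=f_{(\opi',\otau'),\eta}$ is immediate from coassociativity (the paper's \eqref{eq:multicoass}). Multiplicativity is the step that is ``least mechanical'': you need $f_{\olambda,\eta}(a\cdot_{I,J}b)=f_{\olambda|_I,\eta}(a)\,f_{\olambda|_J,\eta}(b)$ for \emph{every} $\olambda\in\oPi_{I\sqcup J}$, and the single bimonoid square \eqref{cd:bimonoid} only handles $l(\olambda)=2$. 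The paper isolates the needed generalization as a separate lemma (\cref{prop:4diaggen}), proving by induction on $l(\olambda)$ that the iterated coproduct $\Delta_{\olambda}$ commutes with $\mu_{I,J}$ up to the evident tensor product of $\mu$'s; this, combined with multiplicativity of $\eta$, gives \eqref{eq:eq1char}. Your plan would discover this when writing it out, but be aware that this iterated compatibility is the genuine content. Also, a minor wording issue: when $\overline{h}$ is not connected, $\overline{h}[\emptyset]\to\mathbb{K}$ is not a ``canonical identification'' but simply the map $\eta_{\emptyset}=\epsilon_{\emptyset}$; this is what makes the counit axiom go through.
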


We remark that this is a claim motivated in \cite[Theorem 11.23]{aguiar10}, which applies to any connected Hopf monoid.
There, the notion of \textit{positive monoid} was introduced, a monoid in species such that $h[\emptyset] = 0$ and with no unit axioms.
Any Hopf monoid $h$ can become a positive monoid $h^+$ by setting $h^+[\emptyset] = 0 $ and $h^+[I]=h[I]$ for any non-empty set $I$.
A functor $\mathcal T^{\vee}$, mapping positive monoids to Hopf monoids, was constructed, so that any \textit{positive monoid} $q$, connected Hopf monoid $h$ and monoid morphism $\eta: h^+ \Rightarrow q$, there exists a unique Hopf monoid morphism $\eta_{\vee}:h \Rightarrow \mathcal T^{\vee}(q)$ with the following commuting diagram on positive monoids:
\begin{equation}\label{cd:universalcommutativediagaguiar}
\begin{tikzcd}
h^+
\arrow[rr, "{\eta^+_{\vee} }"] 
\arrow[rd, "{ \eta }"] 
& 
&
\mathcal T^{\vee}(q)^+
\arrow[dl, "{\epsilon(q) }"] \\
&
q
&
\end{tikzcd}
\end{equation}
where $\epsilon(q) :\mathcal T^{\vee}(q)^+ \Rightarrow q $ is a map that comes from the construction of $\mathcal T^{\vee}$.
In the case where $q$ is the positive exponential monoid, the resulting Hopf monoid $\mathcal T^{\vee}(q) $ is precisely $\hs{WQSym}$, thus obtaining \cref{thm:universalhs} for connected Hopf monoids.

In fact, the result presented in \cref{thm:universalhs} is a minor extension of \cite[Theorem 11.23]{aguiar10} to Hopf monoids that are not necessarily connected, but whose character agrees with the counit in $h[\emptyset]$.
First we will present a self contained proof by means of multi-characters.
In \cref{rem:aguiarwasright}, we present a more direct proof, using \cite[Theorem 11.23]{aguiar10} and a suitably constructed connected Hopf monoid.
This proof was kindly pointed out by a reviewer.

%A notion of multi-character is useful in the proof of this theorem.

\begin{defin}[Multi-character and other notations]\label{defin:multichar}
For a set composition on a non-empty set $I$, say $\opi=S_1 | \cdots | S_l$ with $k\ 0$, denote for short 
$$\overline{h}[\opi] = \bigotimes_{i=1}^l \overline{h}[S_i]\, ,$$
and similarly define for a natural transformation $\zeta:\overline{h} \Rightarrow \overline{b}$ the linear transformation $\zeta[\opi]:\overline{h}[\opi] \to \overline{b}[\opi]$ as $\zeta[\opi] = \bigotimes_{i=1}^l \zeta[S_i] $.
For a character, $\zeta[\opi ]:\overline{h}[\opi ] \to \mathbb{K}^{\otimes l} \cong \mathbb{K}$.

For a set composition $\opi$ on $I$ of length $k$, let us define $\Delta_{\opi} $ as a map 
$$\Delta_{\opi} :\overline{h}[I] \to \overline{h}[\opi]\, ,$$
inductively as follows:
\begin{itemize}

\item If the length of $\opi$ is 1, then $\Delta_{\opi}= id_{\overline{h}[I]}$.

\item If $ \opi = S_1 | \cdots | S_k$ for $k>1$, let $\otau = S_1| \cdots | S_{k-1}$ and define
\begin{equation}\label{eq:deltaopi}
\Delta_{\opi} =  ( \Delta_{\otau } \otimes id_{S_k} ) \circ \Delta_{ I \setminus S_k, S_k} \, .
\end{equation} 
\end{itemize}

Note that, by coassociativity, this definition of $\Delta_{\opi} $ is independent of the chosen order in the inductive definition in \eqref{eq:deltaopi}, i.e. for any set $I=A \sqcup B $ and set composition $\opi \in \oPi_I$ such that $A <_{\opi } B$, we have 
\begin{equation}\label{eq:multicoass}
\Delta_{\opi} = (\Delta_{\opi|_A} \otimes \Delta_{\opi|_B} ) \circ \Delta_{A, B} \, .
\end{equation}
We can define $f_{\opi, \eta }: \overline{h}[I] \to \mathbb{K}$ as

$$\overline{h}[I] \xrightarrow{\Delta_{\opi}} \overline{h}[\opi] \xrightarrow{\eta[\opi] } \mathbb{K}^{\otimes l} \cong \mathbb{K} \, ,$$

Finally, if $A = I \sqcup J $ with $I\neq \emptyset\neq J$ and $\opi \in \oPi_I, \otau\in\oPi_J$, then we write both $\opi| \otau$ and $(\opi, \otau)$ for the unique set composition $\ogamma \in \oPi_A$ such that $\ogamma|_I = \opi$, $\ogamma|_J = \otau$, and $I <_{\ogamma} J$. 
\end{defin}

\begin{smpl}
In the graph combinatorial Hopf monoid $\hs{G}$, take the labeled cycle $C_5$ on $\{ 1, 2, 3, 4, 5\}$ given in \cref{fig:ciclein5}.
Denote by $K_J$ the complete graph on the labels $J$ and by $0_J$ the empty graph on the labels $J$.

\begin{figure}[ht]
    \centering
    \includegraphics[scale=0.5]{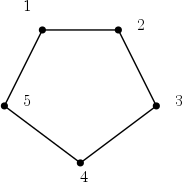}
    \caption{Cycle on the set $[5]$ \label{fig:ciclein5}}
\end{figure}

Consider the set compositions $\opi_1 = 13 | 2| 45$ and $\opi_2 = 24|13|5 $.
Then
$$\Delta_{\opi_1} (C_5 )  = 0_{\{ 1, 3 \} } \otimes 0_{\{ 2\} } \otimes K_{\{ 4, 5\} } \, , $$
$$\Delta_{\opi_2} (C_5 )  = 0_{\{ 2, 4 \} } \otimes 0_{\{ 1, 3\} } \otimes 0_{\{ 5 \} } \, , $$
in particular, $f_{\opi_1, \eta} ( C_5) = 0 $ and $f_{\opi_2, \eta} ( C_5) = 1 $.
Generally, 
\begin{equation}\label{eq:staplgraph}
f_{\opi, \eta} ( G) = \mathbb{1}[\makepar(\opi ) \text{ is a stable set partition on } G]\, .
\end{equation}

From \eqref{eq:staplgraph} and from \cref{lm:monbasis} we have that $\fff ( \uhsm_{\hs{\gHa}} ) = \uhsm_{\gHa} $ is the chromatic symmetric function in non-commutative variables, and that $\bff (\uhsm_{\hs{\gHa}} ) = \Psi_{\gHa}$ is the chromatic symmetric function.

In a similar way we can establish that $\fff ( \uhsm_{\hs{Pos}} ) = \uhsm_{\mathbf{Pos}} $, that $\bff (\uhsm_{\hs{Pos}} ) = \Psi_{\mathbf{Pos}}$, that $\fff ( \uhsm_{\hs{\gpHa}} ) = \uhsm_{\gpHa} $ and that $\bff (\uhsm_{\hs{\gpHa}} ) = \Psi_{\gpHa}$.

\end{smpl}

With this notation, we can rephrase diagram \eqref{cd:bimonoid} in a different way:

\begin{prop}\label{prop:4diaggen}
Consider a Hopf monoid $(\overline{h}, \mu, \iota, \Delta, \epsilon)$.
Let $\ogamma = C_1 | \dots | C_k $ be a set composition on $S = I\sqcup J $, where $I, J$ are non-empty sets.
Write $A_i := C_i \cap I $ and $B_i := C_i \cap J$, and let $\opi := ( \ogamma|_I , \ogamma|_J ) = A_1| \dots |A_k|B_1 | \dots | B_k $, erasing the empty blocks.

Define $\mu_{(\ogamma, I, J)} : \overline{h}[\opi] \to \overline{h}[\ogamma]$ as the tensor product of the maps 
$$ \overline{h}[A_i] \otimes \overline{h}[B_i] \xrightarrow{\mu_{A_i, B_i}} \overline{h}[C_i] \, ,$$
composed with the necessary twist so that it maps $\overline{h}[\opi ] \to \overline{h}[\ogamma ]$.
Then the following diagram commutes:

\begin{equation}\label{cd:genbim}
\begin{tikzcd}
\overline{h}[I] \otimes \overline{h}[J]
\arrow[rr, "{\mu_{I, J} }"] 
\arrow[d, "{ (\Delta_{\ogamma|_I}\otimes \Delta_{\ogamma|_J} )}"] 
&
&
\overline{h}[S]
\arrow[d, "{ \Delta_{\ogamma} }"] \\
\overline{h}[\opi]
\arrow[rr, "{\mu_{(\ogamma, I, J)}}"]
&
&
\overline{h}[\ogamma]
\end{tikzcd}
\end{equation}
\end{prop}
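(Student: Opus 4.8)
The plan is to prove Proposition \ref{prop:4diaggen} by induction on $k = l(\ogamma)$, reducing the general diagram \eqref{cd:genbim} to the fundamental bimonoid axiom \eqref{cd:bimonoid}, which is exactly the case $k=2$. The essential idea is that both vertical maps $\Delta_{\ogamma|_I} \otimes \Delta_{\ogamma|_J}$ and $\Delta_{\ogamma}$ are defined by iterated applications of the binary coproduct, and \eqref{cd:bimonoid} lets us interchange one binary multiplication past one binary comultiplication; so we peel off the last block $C_k$ and its pieces $A_k, B_k$ one step at a time.

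First I would handle the base case. If $k=1$, then $\ogamma = S$ itself, $I <_\ogamma J$ is forced (since there is only one block), $\opi = (\ogamma|_I, \ogamma|_J) = I | J$, and both $\Delta_{\ogamma|_I}$, $\Delta_{\ogamma|_J}$ are identities by definition; the map $\mu_{(\ogamma, I, J)}$ is just $\mu_{I,J}$, so the diagram commutes trivially. If $k = 2$, the diagram is precisely \eqref{cd:bimonoid} after unwinding the identifications (reading $R, T, U, V$ in \eqref{cd:bimonoid} as $A_1, B_1, A_2, B_2$ and $M, N$ as $C_1, C_2$), so there is nothing to prove beyond matching notation.

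For the inductive step, write $\ogamma = \ogamma' \,|\, C_k$ where $\ogamma' = C_1 | \cdots | C_{k-1}$ is a set composition of $S \setminus C_k = (I\setminus A_k) \sqcup (J \setminus B_k)$. By \eqref{eq:deltaopi} and \eqref{eq:multicoass} I can factor $\Delta_\ogamma = (\Delta_{\ogamma'} \otimes \mathrm{id}_{C_k}) \circ \Delta_{S\setminus C_k,\, C_k}$, and similarly factor $\Delta_{\ogamma|_I}$ and $\Delta_{\ogamma|_J}$ through their last blocks $A_k$ and $B_k$ using \eqref{eq:multicoass} with the splittings $I = (I\setminus A_k) \sqcup A_k$ and $J = (J \setminus B_k) \sqcup B_k$. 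After these factorizations, the large square decomposes into two smaller squares stacked vertically: the \emph{bottom} square is an instance of the fundamental bimonoid axiom \eqref{cd:bimonoid} applied to the single multiplication $\mu_{I,J}$, the comultiplication $\Delta_{S\setminus C_k,\, C_k}$, and the two comultiplications $\Delta_{I\setminus A_k,\, A_k}$, $\Delta_{J \setminus B_k,\, B_k}$ — it says $\mu_{I,J}$ followed by splitting off $C_k$ equals (splitting off $A_k$ and $B_k$, then) $\mu_{I\setminus A_k,\, J\setminus B_k} \otimes \mu_{A_k, B_k}$ up to twist; the \emph{top} square is the diagram \eqref{cd:genbim} for the shorter set composition $\ogamma'$ on the set $(I\setminus A_k)\sqcup(J\setminus B_k)$, tensored throughout with $\mathrm{id}_{\overline{h}[C_k]}$, hence commutes by the induction hypothesis. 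Pasting the two commuting squares gives the result, and then $\mu_{(\ogamma, I, J)}$ is by construction the composite of $\mu_{(\ogamma', I\setminus A_k, J\setminus B_k)}$ with the last factor $\mu_{A_k, B_k}$, matching the decomposition exactly.

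The main obstacle I anticipate is purely bookkeeping: keeping track of the several twist maps so that source and target objects genuinely agree, since the blocks $A_1|B_1|\cdots|A_k|B_k$ in $\opi$ interleave after multiplication into $C_1|\cdots|C_k$, and coassociativity \eqref{eq:multicoass} has to be invoked with the correct orderings $A <_\ogamma B$ checked each time. I would follow the paper's convention of leaving twists implicit, determined by source and target, which keeps the argument readable; the only genuine mathematical input is \eqref{cd:bimonoid} and coassociativity of $\Delta$, everything else being the inductive skeleton described above.
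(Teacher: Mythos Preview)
Your proposal is correct and follows essentially the same approach as the paper: induction on $k=l(\ogamma)$, with the base case being the bimonoid axiom \eqref{cd:bimonoid} itself, and the inductive step obtained by pasting one instance of \eqref{cd:bimonoid} with the induction hypothesis tensored with an identity. The only cosmetic difference is that you peel off the last block $C_k$ (reducing to $\ogamma' = C_1|\cdots|C_{k-1}$ on the smaller set $S\setminus C_k$), whereas the paper merges the first two blocks into $C_1\sqcup C_2$ (reducing to $\ogamma^o = C_1\sqcup C_2 | C_3|\cdots|C_k$ on the same set $S$); both are equally valid orientations of the same induction. One small slip: in your description the square containing $\mu_{I,J}$ is the \emph{upper} square in the stacked decomposition of \eqref{cd:genbim}, not the ``bottom'' one, so your labels are reversed---but this does not affect the argument.
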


Note that diagram \eqref{cd:bimonoid} corresponds to diagram \eqref{cd:genbim} when $k=2$.
We prove now that Diagram \eqref{cd:genbim} is obtained by gluing diagrams of the form of Diagram \eqref{cd:bimonoid}:

\begin{proof}
We act by induction on the length of $\ogamma$, $k := l(\ogamma )$.
The base case is for $k=2 $, where we recover diagram \eqref{cd:bimonoid}.

Suppose now that $k\geq 3$.
Applying $I = A_1 \sqcup A_2, J= B_1 \sqcup B_2$ to \eqref{cd:bimonoid} we have the following commuting diagram:

\begin{equation}\label{cd:leftmostdiag4diaggen}
\begin{tikzcd}
\overline{h}[A_1\sqcup A_2 | B_1 \sqcup B_2 ]
\arrow[rrr, "{\mu_{A_1\sqcup A_2, B_1\sqcup B_2} }"] 
\arrow[d, "{ \Delta_{A_1, A_2} \otimes \Delta_{B_1, B_2} }"] 
&
&
&
\overline{h}[C_1 \sqcup C_2]
\arrow[d, "{ \Delta_{C_1, C_2} }"] \\
\overline{h}[A_1|A_2|B_1|B_2]
\arrow[rrr, "{(\mu_{A_1, B_1}\otimes \mu_{A_2, B_2} )\circ \twist }"]
&
&
&
\overline{h}[C_1|C_2]
\end{tikzcd}
\end{equation}

Write $I'=I\setminus (C_1 \sqcup C_2) $ and $J' = J \setminus (C_1 \sqcup C_2) $, let $\ogamma' = C_3 | \dots | C_l $ and take $\ogamma^o = C_1 \sqcup C_2 | C_3 | \dots | C_l = \{ C_1 \sqcup C_2 \} | \gamma' $.
Observe that $\gamma ' $ is a partition of a non-empty set.
By tensoring diagram \eqref{cd:leftmostdiag4diaggen} with

\begin{equation}\label{cd:idmap}
\begin{tikzcd}
\overline{h}[A_3|B_3|A_4| \dots |B_l]
\arrow[rrr, "{ \mu_{(\ogamma', I', J')} }"] 
\arrow[d, "{ \id }"] 
&
&
&
\overline{h}[C_3| \dots | C_l]
\arrow[d, "{ \id }"] \\
\overline{h}[A_3|B_3|A_4| \dots |B_l]
\arrow[rrr, "{ \mu_{(\ogamma', I', J')} }"]
&
&
&
\overline{h}[C_3| \dots | C_l]
\end{tikzcd}
\end{equation}
we have:

\begin{equation}\label{cd:lowerdiag4diaggen}
\begin{tikzcd}
\overline{h}[A_1\sqcup A_2 | B_1 \sqcup B_2  | A_3 | B_3 | \dots ]
\arrow[rrrrr, "{\mu_{A_1\sqcup A_2, B_1\sqcup B_2} \otimes \mu_{(\ogamma', I', J')} }"] 
\arrow[d, "{ \Delta_{A_1, A_2} \otimes \Delta_{B_1, B_2} \otimes id }"] 
&
&
&
&
&
\overline{h}[\ogamma^o ]
\arrow[d, "{ \Delta_{C_1, C_2} \otimes id }"] \\
\overline{h}[A_1|A_2|B_1|B_2 | A_3 | B_3 | A_4 | \dots ]
\arrow[rrrrr, "{(\mu_{A_1, B_1}\otimes \mu_{A_2, B_2} \otimes \mu_{(\ogamma', I', J')} )\circ \twist }"]
&
&
&
&
&
\overline{h}[\ogamma]
\end{tikzcd}
\end{equation}

Note that 
$$\mu_{A_1, B_1} \otimes \mu_{A_2, B_2} \otimes \mu_{(\ogamma', I', J' )} = \mu_{(\ogamma, I, J)}\, ,$$
$$\mu_{A_1\sqcup A_2, B_1\sqcup B_2} \otimes \mu_{(\ogamma', I', J' )} = \mu_{(\ogamma^o, I, J)}\, .$$
So, by induction hypothesis, \eqref{cd:genbim} commutes for the set composition $\ogamma^o = C_1 \sqcup C_2 | C_3 | \dots | C_l $.
Apply the necessary twists so as to glue with with diagram \eqref{cd:lowerdiag4diaggen} as follows:

\begin{equation}\label{cd:concludeproof4diaggen}
\begin{tikzcd}
\overline{h}[I] \otimes \overline{h}[J]
\arrow[rrr, "{\mu_{I, J} }"] 
\arrow[d, "{ \twist \circ (\Delta_{\ogamma^o|_I}\otimes \Delta_{\ogamma^o|_J} ) }"] 
&
&
&
\overline{h}[S]
\arrow[d, "{ \Delta_{\ogamma^o} }"] \\
\overline{h}[A_1\sqcup A_2|B_1\sqcup B_2| A_3| B_3| \dots  | B_l]
\arrow[rrr, "{\mu_{(\ogamma^o, I, J)}}"]
\arrow[d, "{\twist \circ (\Delta_{A_1, A_2}\otimes \Delta_{B_1, B_2} \otimes id)}"]
&
&
&
\overline{h}[\ogamma']
\arrow[d, "{\Delta_{C_1, C_2} \otimes id}"] \\
\overline{h}[A_1|B_1|A_2| \dots  | B_l]
\arrow[rrr, "{\mu_{(\ogamma, I, J)} \circ \twist}"]
&
&
&
\overline{h}[\ogamma]
\end{tikzcd}
\end{equation}

We note that absorbing the twist in the bottom left vector space and erasing the middle line gives us the desired diagram.
\end{proof}

\begin{prop}\label{prop:multicharHopf}
Consider a combinatorial Hopf monoid $(\overline{h}, \eta ) $.
Let $\opi, \otau $ be set compositions of the disjoint non-empty sets $I$ and $J$, respectively, and take $\olambda $ set composition of S = $I \sqcup J$.
Take $a\in \overline{h}[I]$, $b\in \overline{h}[J]$, $c\in \overline{h}[I\sqcup J]$.
Then we have that 
\begin{equation}\label{eq:eq1char}
f_{ \olambda, \eta} (a \cdot_{I, J} b) = f_{\olambda|_I, \eta}(a) f_{ \olambda|_J, \eta}(b)\, ,
\end{equation}
and that
\begin{equation}\label{eq:eq2char}f_{\opi, \eta } \otimes f_{\otau, \eta } \circ \Delta_{I, J } ( a) = f_{(\opi , \otau ), \eta} ( a ) \, .
\end{equation}
%where $(\opi , \otau ) $ is the unique set composition that satisfies $(\opi , \otau )|_I = \opi $, $(\opi , \otau )|_J = \otau$ and $I \, <_{(\opi , \otau )} \, J$.
\end{prop}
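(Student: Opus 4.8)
The plan is to obtain both identities from the ``multi'' versions of the structure already in place: \eqref{eq:eq2char} will come straight from multi\nobreakdash-coassociativity \eqref{eq:multicoass}, while \eqref{eq:eq1char} will come from \cref{prop:4diaggen} (the generalized form of the bimonoid axiom \eqref{cd:bimonoid}) together with the character axiom \eqref{cd:chars}. In both cases the only thing happening beyond unwinding the definition of $f_{\opi,\eta} = \eta[\opi]\circ\Delta_{\opi}$ is keeping track of twists and of the blocks that become empty when intersected with $I$ or $J$.

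For \eqref{eq:eq2char} (whose argument is to be read in $\overline{h}[I\sqcup J]$), I would set $\olambda := (\opi,\otau)$, $A := I$, $B := J$, and invoke \eqref{eq:multicoass}. By construction $I <_{\olambda} J$, $\olambda|_I = \opi$, $\olambda|_J = \otau$, so \eqref{eq:multicoass} reads $\Delta_{(\opi,\otau)} = (\Delta_{\opi}\otimes\Delta_{\otau})\circ\Delta_{I,J}$. Post-composing with $\eta[(\opi,\otau)]$ and noting that, under $\mathbb{K}^{\otimes l}\cong\mathbb{K}$, this map is exactly $\eta[\opi]\otimes\eta[\otau]$ (the tensor factors of $(\opi,\otau)$ are those of $\opi$ followed by those of $\otau$), I get
\[
f_{(\opi,\otau),\eta} = \bigl(\eta[\opi]\otimes\eta[\otau]\bigr)\circ(\Delta_{\opi}\otimes\Delta_{\otau})\circ\Delta_{I,J} = \bigl(f_{\opi,\eta}\otimes f_{\otau,\eta}\bigr)\circ\Delta_{I,J},
\]
which is \eqref{eq:eq2char}.

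For \eqref{eq:eq1char}, write $\olambda = C_1|\cdots|C_k$ and, as in \cref{prop:4diaggen}, put $A_i := C_i\cap I$, $B_i := C_i\cap J$, and $\opi := (\olambda|_I,\olambda|_J)$. That proposition gives $\Delta_{\olambda}\circ\mu_{I,J} = \mu_{(\olambda,I,J)}\circ(\Delta_{\olambda|_I}\otimes\Delta_{\olambda|_J})$, hence
\[
f_{\olambda,\eta}(a\cdot_{I,J}b) = \eta[\olambda]\circ\mu_{(\olambda,I,J)}\circ(\Delta_{\olambda|_I}\otimes\Delta_{\olambda|_J})(a\otimes b).
\]
Now I would analyse $\eta[\olambda]\circ\mu_{(\olambda,I,J)}$ block by block: on the $i$-th factor it is $\eta[C_i]\circ\mu_{A_i,B_i}$, which by the character axiom \eqref{cd:chars} equals $\eta[A_i]\otimes\eta[B_i]$ (and when $A_i$ or $B_i$ is empty this uses $\eta_\emptyset=\epsilon_\emptyset$ and the unitality of $\mu$, so the empty slot simply disappears). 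Reassembling these and reabsorbing the twist that reorders the slots into $A_1|\cdots|A_k|B_1|\cdots|B_k$ shows $\eta[\olambda]\circ\mu_{(\olambda,I,J)} = \eta[\opi] = \eta[\olambda|_I]\otimes\eta[\olambda|_J]$; substituting back and splitting the tensor product yields
\[
f_{\olambda,\eta}(a\cdot_{I,J}b) = \bigl(\eta[\olambda|_I]\circ\Delta_{\olambda|_I}\bigr)(a)\cdot\bigl(\eta[\olambda|_J]\circ\Delta_{\olambda|_J}\bigr)(b) = f_{\olambda|_I,\eta}(a)\,f_{\olambda|_J,\eta}(b).
\]

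The routine but genuinely fiddly point, which I expect to be the only real obstacle, is the twist- and empty-block bookkeeping in that last step: one must check that the twist built into $\mu_{(\olambda,I,J)}$ matches, after applying $\eta$ on each $C_i$, the twist implicit in identifying $\eta[\opi]$ with $\eta[\olambda|_I]\otimes\eta[\olambda|_J]$, and that the blocks $C_i$ meeting only one of $I$, $J$ contribute trivially. Everything else is a direct application of \cref{prop:4diaggen}, \eqref{eq:multicoass}, and the character axiom.
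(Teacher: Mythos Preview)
Your proposal is correct and follows essentially the same route as the paper: \eqref{eq:eq2char} via multi-coassociativity \eqref{eq:multicoass}, and \eqref{eq:eq1char} via \cref{prop:4diaggen} followed by a blockwise application of the character axiom \eqref{cd:chars} to identify $\eta[\olambda]\circ\mu_{(\olambda,I,J)}$ with $\eta[\olambda|_I]\otimes\eta[\olambda|_J]$. Your extra care about twists and empty blocks is more explicit than the paper's, but the argument is the same.
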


\begin{proof}
Note that \eqref{eq:eq1char} reduces to 
\begin{equation}\label{eq:redproofcharmorph} f_{\ogamma, \eta} \circ \mu_{I, J} = f_{\ogamma|_I, \eta} \otimes f_{\ogamma|_J, \eta}.
\end{equation}

Now \cref{prop:4diaggen} tells us that 
\begin{equation}\label{eq:1proof1}\Delta_{\ogamma } \circ \mu_{I, J} = \mu_{(\ogamma , I, J)} \circ (\Delta_{\ogamma|_I} \otimes \Delta_{\ogamma|_J})\, .
\end{equation}

Suppose that $\ogamma = C_1 | \dots | C_k$ for $k\geq 1$.
Then by tensoring diagrams of the form \eqref{cd:chars} for each decomposition $(I \cup C_i)\sqcup (J \cup C_i) = C_i$, we obtain 
\begin{equation}\label{eq:1proof2} \eta[\ogamma ] \circ \mu_{(\ogamma, I, J)} = \eta[(\ogamma|_I,  \ogamma|_J)] \, .
\end{equation}

From \eqref{eq:1proof1} and \eqref{eq:1proof2} we get that 
$$\eta[\ogamma ]\circ \Delta_{\ogamma }\circ \mu_{I, J} = \eta[\ogamma ] \circ \mu_{(\ogamma , I, J)} \circ (\Delta_{\ogamma|_I} \otimes \Delta_{\ogamma|_J}) =\eta[(\ogamma|_I,  \ogamma|_J)]  \circ (\Delta_{\ogamma|_I} \otimes \Delta_{\ogamma|_J}) \, .$$

So
$$f_{\ogamma, \eta } \circ \mu_{I, J} = (\eta[\ogamma|_I] \otimes \eta[ \ogamma|_J] )\circ (\Delta_{\ogamma|_I} \otimes \Delta_{\ogamma|_J}) = \eta_{\ogamma|_I} \otimes \eta_{\ogamma|_J}\, . $$

This concludes the proof of \eqref{eq:redproofcharmorph}.
Remains to show \eqref{eq:eq2char}, which follows from \eqref{eq:multicoass} via

$$( f_{\opi, \eta} \otimes f_{\otau, \eta} )\circ \Delta_{I, J} = ( \eta[\opi] \otimes \eta[\otau] )\circ ( \Delta_{\opi} \otimes \Delta_{\otau} ) \circ \Delta_{I, J} =   \eta[(\opi , \otau)] \circ \Delta_{(\opi,  \otau ) } = f_{(\opi  , \otau ), \eta } \, ,$$
whenever both $I$ and $J$ are non empty, establishing the equality as desired.
\end{proof}

\begin{proof}[Proof of \cref{thm:universalhs}]
Let $a \in \overline{h}[I]$.
We define
$$\uhsm_{\overline{h}}( a) = \sum_{\opi \in \oPi_I} \mathbb{M}_{\opi}\eta_{\opi}(a) \, ,$$

The commutativity of Diagram \eqref{cd:uhsm} follows because $f_{\opi, \eta }= \eta $ whenever $\opi $ has length $1$ or $0$.
Remains to show that such map is a combinatorial Hopf monoid morphism, i.e. that we have: 
\begin{itemize}
    \item $\uhsm_{\overline{h}} ( a \cdot_{I, J} b ) = \uhsm_{\overline{h}} (a)\cdot  \uhsm_{\overline{h}} (b) $.
    
    \item $\uhsm_{\overline{h}\cdot \overline{h}} (\Delta_{I, J} a ) = \Delta_{I, J} \circ\uhsm_{\overline{h}} (a) .$
    
    \item $\uhsm_{\overline{h}} \circ \iota_{\overline{h}}  = \iota_{\hs{WQSym}} $.
    
    \item $ \epsilon_{\hs{WQSym}} \circ \uhsm_{\overline{h}} = \epsilon_{\overline{h}}$.
\end{itemize}

The last two equations follow from direct computation.
Taking the coefficients on the monomial basis for the first two items, this reduces to \cref{prop:multicharHopf} whenever $I, J$ are non empty.

Further, when wlog $I=\emptyset $, we can assume $a= \lambda \mathbb{M}_{\vec{\emptyset }} $ and the first equation follows immediately.
Also, when $I=\emptyset $, $\Delta_{I, J} (a) = a \otimes \mathbb{M}_{\vec{\emptyset }} $, and the second equation follows.
This concludes that $\uhsm_{\overline{h}} $ is a combinatorial Hopf monoid morphism.

It remains to establish the uniqueness.
Suppose that $\phi: \overline{h} \Rightarrow \hs{WQSym}$ is a combinatorial Hopf monoid morphism.
Note that $\eta_0 [\emptyset]$ is an isomorphism, so from \eqref{cd:uhsm} applied to both $\phi $ and $\uhsm_{\hs{h}}$ we get $\eta_0 [\emptyset]^{-1} \eta[\emptyset] = \phi[\emptyset ] = \uhsm_{\hs{h}}[\emptyset ]$.

Now take $I$ non-empty.
For each $a \in \overline{h}[I]$, write $\phi[I] ( a ) = \sum_{\opi \in \oPi_I } \mathbb{M}_{\opi } \phi_{\opi } (a) $ and apply $\Delta_{\opi} $ on both sides.
Since $\phi$ is a comonoid morphism, we have:

\begin{equation}\label{eq:basephi}
\phi[\opi] \Delta_{\opi } ( a ) = \Delta_{\opi } \phi[I] ( a ) = \sum_{\otau } \phi_{\otau}(a) \Delta_{\opi } \mathbb{M}_{\otau}  = \phi_{\opi}(a) \Delta_{\opi } \mathbb{M}_{\opi} \, ,
\end{equation}
because $\Delta_{\opi}(\mathbb{M}_{\otau}) = 0$ whenever $\opi \neq \otau $ and $|\opi | = |\otau |$.

However, since $\eta_0 \phi = \eta $, we have that $\eta_0[\opi ] \phi[\opi ] = \eta[\opi ]$, so 

\begin{equation}\label{eq:canophi}
 \eta_0[\opi] \phi[\opi ] \Delta_{\opi } (a) = \eta[\opi ] \Delta_{\opi } (a)  = f_{\opi, \eta } (a) \, .
\end{equation}

%Let us denote $\eta_{0, \opi } $ the multi-character of $\eta_0$ of type $\opi $.
Applying $\eta_0[\opi ] $ on \eqref{eq:basephi} and using \eqref{eq:canophi}, gives us $ \phi_{ \opi } ( a) f_{\opi, \eta_0 }(\mathbb{M}_{\opi}) =\eta_{\opi } a  $.
But $f_{ \opi, \eta_0 }(\mathbb{M}_{\opi}) = 1$, so we have that 
$$\phi_{ \opi } ( a)  =f_{\opi, \eta }(a) \, .$$
which concludes the uniqueness. 
\end{proof}

\begin{rem}\label{rem:aguiarwasright}
We would like to point out that this theorem also follows from \cite[Theorem 11.23]{aguiar10} directly.
However, we present here a proof with multicharacters in the interest of self containment.

Specifically, let $(\eta, h)$ be a combinatorial Hopf monoid, and consider the following Hopf submonoid $h'$, where $h'[I]=h[I]$ for $I \neq \emptyset$, and $h'[\emptyset ] = 1\mathbb{K}$.
This is also a combinatorial Hopf monoid, as the maps $\epsilon $ and $\eta $, suitably redefined to be the relevant restriction to $h'[\emptyset ]$, satisfy the Hopf monoid axioms.
The unit and bialgebra axioms guarantee that $h'[\emptyset ] $ is stable for the product and the coproduct.

Therefore, because this a connected Hopf monoid, from \cite[Theorem 11.23]{aguiar10} there is a unique Hopf monoid morphism $\Psi: h' \Rightarrow \hs{WQSym}$.
This map can be extended to a Hopf monoid morphism $\Psi:h \Rightarrow \hs{WQSym}$ by setting $\Psi_{\emptyset} = \eta_{\emptyset}$.
It must be checked that this map satisfies the Hopf monoid morphism axioms.
It is a direct observation that \eqref{cd:uhsm} commutes, making this a Hopf monoid morphism.

Conversely, if there are two distinct combinatorial Hopf monoids $\Psi_1, \Psi_2 : h \Rightarrow \hs{WQSym}$, then they must agree on $h[\emptyset ]$ in accordance with \eqref{cd:uhsm}.
On the other hand, if we consider the compositions $\inc \circ \Psi_1$, $ \inc \circ \Psi_2$, these correspond to two combinatorial Hopf monoid morphisms $h ' \Rightarrow \hs{WQSym}$, so they must coincide in accordance with \cite[Theorem 11.23]{aguiar10}.
Thus, $\Psi_1, \Psi_2$ must agree on $h[I]$ for any non-empty set $I$.
\end{rem}

\subsection{Generalized permutahedra and posets}\label{sec:gperandPosets}
In the following, we see that the universal map that we constructed above in \cref{thm:universalhs} is well behaved with respect to combinatorial Hopf monoid morphisms.
This is in fact a classical property of terminal objects in any category.

\begin{lm}\label{lm:liftchars}
If $\phi : \hs{h}_1 \Rightarrow \hs{h}_2 $ is a combinatorial Hopf monoid morphism between two Hopf monoids with characters $\eta_1$ and $\eta_2 $ respectively, then the following diagram commutes

\begin{equation}\label{cd:chrommorph}
\begin{tikzcd}
\hs{h}_1
\arrow[rr, "{ \phi}"] 
\arrow[rd, "{ \mathbf{\Psi }_{\hs{h}_1}}"] 
& 
&
\hs{h}_2
\arrow[dl, "{ \mathbf{\Psi }_{\hs{h}_2} }"] \\
&
\hs{WQSym}
&
\end{tikzcd}
\end{equation}
\end{lm}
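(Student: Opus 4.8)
The statement is the standard fact that a terminal object absorbs morphisms: since $(\hs{WQSym},\eta_0)$ is terminal in the category of combinatorial Hopf monoids by \cref{thm:universalhs}, both $\uhsm_{\overline{h}_2}\circ\phi$ and $\uhsm_{\overline{h}_1}$ are combinatorial Hopf monoid morphisms from $(\overline{h}_1,\eta_1)$ to $(\hs{WQSym},\eta_0)$, and by uniqueness they must coincide. So the real content is only to check that $\uhsm_{\overline{h}_2}\circ\phi$ is in fact a combinatorial Hopf monoid morphism, which I would verify in two short moves.

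First I would note that $\uhsm_{\overline{h}_2}\circ\phi$ is a composite of Hopf monoid morphisms, hence a Hopf monoid morphism. Second, I must check it respects the characters, i.e. that $\eta_0\circ(\uhsm_{\overline{h}_2}\circ\phi)=\eta_1$. This follows by pasting the two commuting triangles: the defining diagram \eqref{cd:uhsm} for $\overline{h}_2$ gives $\eta_0\circ\uhsm_{\overline{h}_2}=\eta_2$, and the hypothesis that $\phi$ is a combinatorial Hopf monoid morphism gives, via diagram \eqref{cd:chamorph}, that $\eta_2\circ\phi=\eta_1$. Composing, $\eta_0\circ\uhsm_{\overline{h}_2}\circ\phi=\eta_2\circ\phi=\eta_1$, as required.

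Having established that $\uhsm_{\overline{h}_2}\circ\phi\colon(\overline{h}_1,\eta_1)\Rightarrow(\hs{WQSym},\eta_0)$ is a combinatorial Hopf monoid morphism, I invoke the uniqueness clause of \cref{thm:universalhs} applied to the combinatorial Hopf monoid $(\overline{h}_1,\eta_1)$: there is exactly one such morphism, which by definition is $\uhsm_{\overline{h}_1}$. Therefore $\uhsm_{\overline{h}_2}\circ\phi=\uhsm_{\overline{h}_1}$, which is precisely the commutativity of diagram \eqref{cd:chrommorph}.

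There is essentially no obstacle here — the only thing to be careful about is making the character-compatibility pasting explicit rather than waving at "general nonsense", and making sure the notational identification of $\uhsm_{\overline{h}}$ as produced by \cref{thm:universalhs} is used. One could equally well give a direct computation using the explicit formula $\uhsm_{\overline{h}}(a)=\sum_{\opi}\mathbb{M}_{\opi}\,\eta_{\opi}(a)$ and \cref{prop:multicharHopf}, checking $\phi_\ast$ of a character is a character and that $f_{\opi,\eta_2}\circ\phi=f_{\opi,\eta_1\!}$ coefficientwise; but the terminal-object argument is cleaner and I would present that.
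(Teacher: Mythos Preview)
Your proof is correct and is essentially the same as the paper's own argument: the paper observes that compositions of combinatorial Hopf monoid morphisms are again combinatorial Hopf monoid morphisms, concludes that $\uhsm_{\overline{h}_2}\circ\phi$ is a combinatorial Hopf monoid morphism $\overline{h}_1\Rightarrow\hs{WQSym}$, and invokes uniqueness from \cref{thm:universalhs}. Your write-up is in fact more explicit than the paper's in spelling out the character-pasting step $\eta_0\circ\uhsm_{\overline{h}_2}\circ\phi=\eta_2\circ\phi=\eta_1$.
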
 

\begin{proof}
It is a direct observation that the composition of combinatorial Hopf monoids morphism is still a combinatorial Hopf monoid morphism.
Hence, we have that $\uhsm_{\hs{h}_2} \circ \phi : \hs{h}_1 \to \hs{WQSym}$ is a combinatorial Hopf monoid morphism.
By uniqueness it is $\uhsm_{\hs{h}_2} \circ \phi= \uhsm_{\hs{h}_1}$.
\end{proof}

\begin{cor}\label{cor:PosEmbedding}
There are no Hopf monoid morphisms $\phi : \hs{Pos} \to \hs{GP}$ that preserve the corresponding characters.
\end{cor}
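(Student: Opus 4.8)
The plan is a short argument by contradiction, leveraging the universal property of $\hs{WQSym}$ (\cref{thm:universalhs}) against the computation $\im\uhsm_{\mathbf{HGP}}=\mathbf{SC}$ from \cref{lm:liness} (see also \cref{thm:nestokernel}).

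Suppose, for contradiction, that $\phi:\hs{Pos}\to\hs{HGP}$ is a Hopf monoid morphism intertwining the characters of \cref{smpl:CHS} (the character on $\hs{HGP}$ being the restriction of the point-character on $\hs{GP}$), i.e. a combinatorial Hopf monoid morphism. Then $\uhsm_{\hs{HGP}}\circ\phi$ is again a combinatorial Hopf monoid morphism $\hs{Pos}\to\hs{WQSym}$, so by the uniqueness clause of \cref{thm:universalhs} it must equal $\uhsm_{\hs{Pos}}$; this is precisely \cref{lm:liftchars}. Applying the full Fock functor $\fff$, and using its functoriality on Hopf monoid morphisms together with the identifications $\fff(\hs{Pos})=\mathbf{Pos}$, $\fff(\hs{HGP})=\mathbf{HGP}$, $\fff(\uhsm_{\hs{Pos}})=\uhsm_{\mathbf{Pos}}$ and $\fff(\uhsm_{\hs{HGP}})=\uhsm_{\mathbf{HGP}}$ recorded earlier, I obtain a Hopf algebra morphism $\fff(\phi):\mathbf{Pos}\to\mathbf{HGP}$ satisfying $\uhsm_{\mathbf{HGP}}\circ\fff(\phi)=\uhsm_{\mathbf{Pos}}$. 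Hence $\im\uhsm_{\mathbf{Pos}}\subseteq\im\uhsm_{\mathbf{HGP}}=\mathbf{SC}$.

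To contradict this it suffices to exhibit one poset whose image under $\uhsm_{\mathbf{Pos}}$ escapes $\mathbf{SC}$. I would take $P$ to be the two-element chain on $\{1,2\}$ with $1<_P2$. From the formula $\uhsm_{\mathbf{Pos}}(P)=\sum_{\opi\in\oPi_2}\mathbf{M}_{\opi}\,f_{\opi,\eta}(P)$ (the $\fff$-image of the expression built in the proof of \cref{thm:universalhs}), using that $\{1\}$ is an order ideal of $P$ while $\{2\}$ is not, and that a one-element poset is an antichain, a direct computation gives $f_{1|2,\eta}(P)=1$ whereas $f_{2|1,\eta}(P)=f_{12,\eta}(P)=0$, so $\uhsm_{\mathbf{Pos}}(P)=\mathbf{M}_{1|2}$. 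On the other hand $1|2\sim2|1$ for the SC equivalence of \cref{def:SCspace}, so $\mathbf{N}_{[1|2]}=\mathbf{M}_{1|2}+\mathbf{M}_{2|1}$, and $\mathbf{SC}_2=\spn\{\mathbf{N}_{[1|2]},\mathbf{N}_{[12]}\}$ consists of elements in which $\mathbf{M}_{1|2}$ and $\mathbf{M}_{2|1}$ occur with the same coefficient. Since $\mathbf{M}_{1|2}$ does not, $\uhsm_{\mathbf{Pos}}(P)\notin\mathbf{SC}$, contradicting the previous paragraph, which proves the corollary.

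The substantive content is entirely in the first two steps; the only point requiring care is the passage through $\fff$, namely checking that the species-level equality $\uhsm_{\hs{HGP}}\circ\phi=\uhsm_{\hs{Pos}}$ does produce the Hopf-algebra-level equality $\uhsm_{\mathbf{HGP}}\circ\fff(\phi)=\uhsm_{\mathbf{Pos}}$, which amounts to functoriality of the Fock functor on combinatorial Hopf monoid morphisms. An alternative to the explicit poset is to invoke surjectivity of $\uhsm_{\mathbf{Pos}}$ onto $\mathbf{WQSym}$ together with $\dim\mathbf{SC}_n<\dim\mathbf{WQSym}_n$ for $n\ge2$ (from \cref{prop:growthdimim}); but the two-element chain yields a cleaner, self-contained contradiction, so that is the route I would take.
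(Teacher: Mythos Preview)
Your proof is correct and structurally identical to the paper's: both argue by contradiction, invoke \cref{lm:liftchars} to get $\uhsm_{\hs{HGP}}\circ\phi=\uhsm_{\hs{Pos}}$, and pass through the Fock functor to obtain $\uhsm_{\mathbf{HGP}}\circ\fff(\phi)=\uhsm_{\mathbf{Pos}}$. The only difference lies in the final contradiction. The paper uses exactly the alternative you mention at the end: $\uhsm_{\mathbf{Pos}}$ is surjective onto $\mathbf{WQSym}$ while $\uhsm_{\mathbf{HGP}}$ is not (citing \cref{prop:growthdimim}). Your explicit two-element chain is more elementary and self-contained, avoiding any appeal to surjectivity of $\uhsm_{\mathbf{Pos}}$ or to the asymptotic dimension count; the paper's route is quicker given those results are already in hand.

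One small slip: the paper defines ``ideal'' as an \emph{upper} set (see \cref{sec:GPos}: ``$S$ is an ideal of $P$ if whenever $x\leq y$ and $x\in S$, then $y\in S$''). With that convention, for the chain $1<_P 2$ the set $\{2\}$ is the ideal and $\{1\}$ is not, so $\Delta_{\{2\},\{1\}}(P)\neq 0$ while $\Delta_{\{1\},\{2\}}(P)=0$, giving $\uhsm_{\mathbf{Pos}}(P)=\mathbf{M}_{2|1}$ rather than $\mathbf{M}_{1|2}$. This is harmless for your argument, since $\mathbf{M}_{2|1}\notin\mathbf{SC}_2$ for the same reason.
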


\begin{proof}
For sake of contradiction, suppose that such $\phi$ exists, hence it satisfies $\fff(\uhsm_{\hs{GP}} ) \circ \fff(\phi ) = \fff (\uhsm_{\hs{Pos}}) $, according to \cref{lm:liftchars}, so

$$\uhsm_{\mathbf{Pos}} = \uhsm_{\mathbf{GP } } \circ  \fff(\phi ) \,. $$

However, $\uhsm_{\mathbf{Pos}} $ is surjective, whereas we have seen in \cref{thm:imGP} that $\uhsm_{\mathbf{GP } } $ is not surjective.
This is the desired contradiction.
\end{proof}

\section*{Acknowledgement}

The author would like to thank the support of the SNF grant number 172515.
This work was brought up in the encouraging environment provided by V. F\'eray, who gave invaluable suggestions.

The author would also like to thank the referees, for their helpful comments that improved readability, clarity and mathematical rigor.
Particularly, to the referees for pointing out the simple proof using \cite[Theorem 11.23]{aguiar10}.

\appendix

\section{Computing the augmented chromatic symmetric function on graphs\label{app:coef}}

Recall that in \cref{sec:3}, we define the ring $R$ as the quotient ring of power series in $\mathbb{K}[[x_1, \dots ; q_1, \dots ]]$ by the relations $q_i(q_i-1)^2 = 0 $.
We are then able to define a map $\tilde{\Psi} : \mathbf{G} \to R $, and we observed that $\ker \tilde {\Psi } = \ker \Psi$ in \cref{prop:goodinv}.

Here, we consider some specializations of $\tilde{\Psi } $  and obtain a linear combination of chromatic symmetric function of smaller graphs, in \cref{thm:tildeinpsi}.
The main motivation is to explore how to obtain \cref{prop:goodinv} without using \cref{thm:graphkernel}, and instead use a more direct way.
This is not established in this paper, which illustrates the strength of the kernel approach.
Let us first set up some necessary notation.

For an element $f \in R$, denote by $f\big|_{q_i = a} $ the specialization of the variable $q_i $ to $a$ in $f$, whenever defined (for $a = 0$ or $a=1$).
Additionally, denote by $f\big|_{q_i = 1'} $ the specialization of the variable $q_i $ to $1$ in $\frac{\partial}{\partial q_i} f$.
This is naturally an abuse of notation that allows us to denote the composition of several specializations in a more compact way.
We also use this notation for the $x_i $ variables.
Further, we denote by $f\big|_{x_i = 0''}$ the specialization $\frac{\partial^2}{\partial x_i^2} f\big|_{x_i=0}$.

We note that, in this ring, we can specialize infinitely many variables to zero.
This however cannot be done with specializations to one, as the reader can readily check.
Taking specializations of $q_i $ to $a \not \in \{ 0, 1, 1' \} $ is not well defined in the quotient ring.

For an edge $e \in E(G)$, denote by $G\setminus \mathcal{N}(e) $ the graph resulting after both endpoints of $e$ are deleted from $G$, along with all its incident edges.

We say that a tuple of edges $m =(e_1, \dots , e_k) $ is an ordered matching if no two edges share a vertex, and write $\mathcal{M}_k(G)$ for the set of ordered matchings of size $k$ on a graph $G$.
We write $G\setminus \mathcal{N} ( m )$ for the graph resulting after removing all vertices in the matching $m$ from $G$, along with all its incident edges.

Finally, for a symmetric function $f$ over the variables $x_1, x_2, \dots $, let $f\uparrow_k $ be the symmetric function over the variables $x_{k+1}, x_{k+2}, \dots $ with each index in $f$ shifted up by $k$.

We obtain now a formula for $\tilde{\Psi}(G)$ that depends only on $\Psi_{\gHa} (H_i) $ for some graphs $H_i$ that have less vertices than $G$.

\begin{thm}\label{thm:tildeinpsi}
Let $k \geq 0$.
We have the following relation between the graph invariant $\tilde{\Psi }$ and the chromatic symmetric function $\Psi_{\gHa }$:
\begin{equation}\label{eq:specializing}
\frac{1}{2^k} \tilde{\Psi}(G)\Big|_{\substack{q_i = 1' \, \, i =1, \dots , k \\ q_i=0 \,\, i>k \\x_i= 0'' \, \, i =1, \dots , k}} =  \sum_{m \in \mathcal{M}_k(G)} \Psi_{\gHa}(G\setminus \mathcal{N}(m))\uparrow_k  \, .
\end{equation}
\end{thm}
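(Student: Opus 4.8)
The plan is to prove \cref{thm:tildeinpsi} by a direct computation, working with the representative $\tilde{\Psi}(G)=\sum_{f} x_f\prod_i q_i^{c_G(f,i)}\in\mathbb{K}[[x_1,\dots;q_1,\dots]]$, the sum being over all colorings $f$ of $G$. Before computing I would check that the specializations appearing on the left-hand side are well defined on the quotient ring $R$; the only delicate point is that differentiating an element of the defining ideal $\big(q_i(q_i-1)^2\big)_{i\ge1}$ with respect to a variable $q_i$ with $i\le k$ and then evaluating $q_1=\dots=q_k=1$ and $q_{k+1}=q_{k+2}=\dots=0$ yields zero, since $\frac{\partial}{\partial q_i}\!\big(h\,q_i(q_i-1)^2\big)$ still vanishes at $q_i=1$, and $\frac{\partial}{\partial q_i}\!\big(h\,q_j(q_j-1)^2\big)$ remains a multiple of $q_j(q_j-1)^2$ for $j\ne i$. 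Granting this, I may compute on the chosen representative; and since the variable families $\{q_i\}_{i>k}$, $\{q_i\}_{i\le k}$, $\{x_i\}_{i\le k}$ involved in the three operations are pairwise disjoint, the operations commute and I would apply them in turn.

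I would then record, for each operation, which colorings survive (cumulatively) and which scalar factor appears. Setting $q_i=0$ for $i>k$ keeps precisely the colorings $f$ with $c_G(f,i)=0$ for all $i>k$. Applying $\frac{\partial}{\partial q_i}$ and then $q_i=1$ for each $i\le k$ keeps precisely those $f$ with $c_G(f,i)\ge1$ for all $i\le k$ and multiplies each surviving term by $\prod_{i=1}^{k}c_G(f,i)$. Writing $x_f=\prod_{j\ge1}x_j^{|f^{-1}(j)|}$, applying $\frac{\partial^2}{\partial x_i^2}$ and then $x_i=0$ for each $i\le k$ keeps precisely those $f$ with $|f^{-1}(i)|=2$ for all $i\le k$, multiplies by $\prod_{i\le k}2=2^k$, and removes the factors $x_i$ with $i\le k$. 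Dividing by $2^k$ I would obtain
\[
\frac{1}{2^k}\,\tilde{\Psi}(G)\Big|_{\substack{q_i = 1' \, \, i =1, \dots , k \\ q_i=0 \,\, i>k \\x_i= 0'' \, \, i =1, \dots , k}}=\sum_{f\in\mathcal{S}}\Big(\prod_{i=1}^{k}c_G(f,i)\Big)\prod_{j>k}x_j^{|f^{-1}(j)|},
\]
where $\mathcal{S}$ is the set of colorings $f$ of $G$ with $|f^{-1}(i)|=2$ and $c_G(f,i)\ge1$ for all $i\le k$, and $c_G(f,j)=0$ for all $j>k$.

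Next I would interpret $\mathcal{S}$ combinatorially. For $f\in\mathcal{S}$ and $i\le k$ the only possible monochromatic edge of color $i$ joins the two vertices of $f^{-1}(i)$, so $c_G(f,i)\ge1$ forces these vertices to be adjacent in $G$, whence $c_G(f,i)=1$ because $G$ is simple; in particular $\prod_{i\le k}c_G(f,i)=1$. Setting $e_i:=f^{-1}(i)\in E(G)$ for $i\le k$, the edges $e_1,\dots,e_k$ are pairwise vertex-disjoint (their vertices carry distinct colors), so $m:=(e_1,\dots,e_k)\in\mathcal{M}_k(G)$, and $g:=f|_{V(G)\setminus V(m)}$ is a coloring of $G\setminus\mathcal{N}(m)$ taking values in $\{k+1,k+2,\dots\}$ (as $f^{-1}(i)$ is exactly $V(e_i)$ for $i\le k$) which is proper (a $g$-monochromatic edge would be an $f$-monochromatic edge of color $>k$, contradicting $c_G(f,j)=0$). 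Conversely, from $m=(e_1,\dots,e_k)\in\mathcal{M}_k(G)$ and a proper coloring $g$ of $G\setminus\mathcal{N}(m)$ with values in $\{k+1,k+2,\dots\}$, coloring the two ends of each $e_i$ with $i$ and the other vertices by $g$ produces an element of $\mathcal{S}$; I would verify that these two constructions are mutually inverse and that $\prod_{j>k}x_j^{|f^{-1}(j)|}=x_g$ along the correspondence.

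Finally, summing over $\mathcal{S}$ through this bijection and using that adding $k$ to every color gives a bijection between proper colorings of a graph $H$ with values in $\{1,2,\dots\}$ and those with values in $\{k+1,k+2,\dots\}$, sending $x_f$ to $x_f\!\uparrow_k$, I would reach
\[
\frac{1}{2^k}\,\tilde{\Psi}(G)\Big|_{\substack{q_i = 1' \, \, i =1, \dots , k \\ q_i=0 \,\, i>k \\x_i= 0'' \, \, i =1, \dots , k}}=\sum_{m\in\mathcal{M}_k(G)}\Psi_{\gHa}\big(G\setminus\mathcal{N}(m)\big)\!\uparrow_k,
\]
which is exactly the asserted identity (and for $k=0$ it collapses to $\tilde{\Psi}(G)|_{q_i=0}=\Psi_{\gHa}(G)$, already noted). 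I expect the genuinely delicate step to be the verification that the $q_i=1'$ specialization descends to $R$; the remainder is bookkeeping over the finitely many colorings that survive the three successive filters.
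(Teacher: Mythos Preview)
Your proof is correct and follows essentially the same approach as the paper: compute on the representative $\sum_f x_f\prod_i q_i^{c_G(f,i)}$, apply the three specializations, and reinterpret the surviving terms via ordered matchings. The only noteworthy difference is the order in which you apply the specializations: the paper applies $q_i=1'$ and $q_i=0$ first, keeps the weight $\prod_{i\le k}c_G(f,i)$, and interprets that product as the number of ordered matchings $(e_1,\dots,e_k)$ with $e_i$ monochromatic of color $i$ before swapping sums and only then applying $x_i=0''$; you instead apply $x_i=0''$ simultaneously, which forces $|f^{-1}(i)|=2$ and hence $c_G(f,i)=1$, turning the weighted sum into an unweighted bijection. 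Both routes arrive at the same place with the same amount of work. Your explicit check that the $q_i=1'$ specialization descends to the quotient $R$ is a welcome addition that the paper leaves implicit.
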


\begin{proof}
Recall that $c_G(f, i)$ counts the number of monochromatic edges in $G$ with color $i$.
With the expression given in \cref{sec:3} for the augmented chromatic symmetric function, we have
$$ \tilde{\Psi} (G) \Big|_{q_i = 1' \, \, i =1, \dots , k }  = \sum_{f:V(G) \to \mathbb{N} } x_f \left(\prod_{i > k} q_i^{c_G(f, i) } \right) \prod_{i=1}^k c_G(f, i) \, .$$

We say that a coloring of $G$ is $k$\textit{-proper} if all monochromatic edges have color $j \leq k$.
Observe that for a fixed coloring $f$, $\prod_{i=1}^k c_G(i, f) $ counts ordered matchings $(e_1, \ldots , e_k )$ in $G$ that satisfy $f(v) = \{ i \}$ for any vertex $v$ of $e_i$, $i=1, \dots , k$.
Then, it is clear that 
\begin{equation}
\begin{split}
\tilde{\Psi} (G) \Big|_{\substack{q_i = 1' \, \, i =1, \dots , k\\  q_i=0 \, \, i>k }} &= \sum_{f \text{ is k - proper } } x_f \prod_{i=1}^k c_G(i, f) = \sum_{f \text{ is k - proper }} \left[ x_f \, \, \, \sum_{\substack{m \in \mathcal{M}_k(G) \\ m = (e_1, \dots , e_k)  \\ f(e_i) = \{ i \} }} 1\right] \\
  &= \sum_{\substack{m \in \mathcal{M}_k(G) \\ m = (e_1, \dots , e_k) }} \left[ \, \, \, \sum_{\substack{f \text{ is k - proper }\\ f(e_i) = \{ i \} } } x_f  \right] \\
  &= \sum_{\substack{m \in \mathcal{M}_k(G) \\ m = (e_1, \dots , e_k) } } \, \,  \, \sum_{\substack{g \text{ coloring in } G\setminus \mathcal{N}(m) \\ g \text{ is k - proper}  } } x_g (x_1\cdots x_k)^2 \, .
\end{split}
\end{equation}

So after the specialization $x_i = 0'' $ for $i=1, \dots , k$, all colorings of $G\setminus \mathcal{N}(m) $ that use a color $j \leq k $ vanish, so
\begin{equation}
\begin{split}
\tilde{\Psi} (G) \Big|_{\substack{q_i = 1' \, \, i =1, \dots , k \\ q_i=0 \,\, i>k \\x_i= 0'' \, \, i =1, \dots , k}} =& \sum_{\substack{m \in \mathcal{M}_k(G) \\ m = (e_1, \dots , e_k) } } \, \, \, \sum_{\substack{g \text{ proper in } G\setminus\mathcal{N}(m)  \\ \im g \subseteq \mathbb{Z}_{>k}  } }
2^k x_g \\
				=& 2^k \sum_{m \in \mathcal{M}_k(G) }  \Psi_{\gHa}( G\setminus \mathcal{N}(m)  )\uparrow_k 
\end{split}
\end{equation}
as desired.
\end{proof}

The right hand side of the expression of \cref{thm:tildeinpsi} can, in fact, be determined by the chromatic symmetric function of the graph $G$.

\begin{prop}\label{prop:RHSthm}
If $G_1, G_2$ are two graphs such that $\Psi_{\gHa}(G_1 ) = \Psi_{\gHa} (G_2) $, and $k$ a positive number, then 
$$ \sum_{m \in \mathcal{M}_k(G_1)} \Psi_{\gHa}(G_1\setminus \mathcal{N}(m)) = \sum_{m \in \mathcal{M}_k(G_2)} \Psi_{\gHa}(G_2\setminus \mathcal{N}(m)) \, . $$
\end{prop}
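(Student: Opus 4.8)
The plan is to show that the right-hand side of \cref{thm:tildeinpsi} depends on $G$ only through $\Psi_{\gHa}(G)$, by exhibiting it as the image of $\Psi_{\gHa}(G)$ under a well-defined linear operator on $Sym$. Fix $k\geq 0$ and define, on the graph Hopf algebra $\mathbf{G}$, the linear map $D_k(G) = \sum_{m\in\mathcal{M}_k(G)}\Psi_{\gHa}(G\setminus\mathcal{N}(m))\uparrow_k$, extended linearly. By \cref{thm:graphcomukernel}, $\ker\Psi_{\gHa}$ is spanned by the modular relations and the isomorphism relations, so it suffices to check that $D_k$ annihilates both families of relations; then $D_k$ factors through $\Psi_{\gHa}$, i.e.\ there is a linear map $\overline{D}_k:Sym\to Sym$ with $D_k = \overline{D}_k\circ\Psi_{\gHa}$, and the proposition is exactly the statement $D_k(G_1)=D_k(G_2)$ whenever $\Psi_{\gHa}(G_1)=\Psi_{\gHa}(G_2)$.

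For the isomorphism relations $G_1 - G_2$ the claim is immediate: an isomorphism $\phi:V(G_1)\to V(G_2)$ restricts to a bijection $\mathcal{M}_k(G_1)\to\mathcal{M}_k(G_2)$, and $G_1\setminus\mathcal{N}(m)\cong G_2\setminus\mathcal{N}(\phi(m))$, so the two sums agree term by term (the $\uparrow_k$ shift is harmless since $\Psi_{\gHa}$ is an isomorphism invariant). For a modular relation $l = G - G\cup\{e_1\} - G\cup\{e_2\} + G\cup\{e_1,e_2\}$, where $\{e_1,e_2,e_3\}$ is a triangle with $e_3\in E(G)$ and $e_1,e_2\notin E(G)$, I would compute $D_k(l)$ by organizing ordered matchings according to how they interact with the vertices $\{v_1,v_2,v_3\}$ of the triangle. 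Note first that $\mathcal{M}_k$ of a graph depends on the edge set, so when an edge of a matching uses $e_1$ or $e_2$ the four terms no longer pair up trivially; the key observation is that deleting $\mathcal{N}(m)$ kills all three vertices of the triangle as soon as the matching touches two of them, and otherwise the deleted graphs $G\setminus\mathcal{N}(m)$, $(G\cup\{e_i\})\setminus\mathcal{N}(m)$ differ only by the presence of triangle edges among the surviving vertices. A cleaner route: split each $\mathcal{M}_k$ according to whether the matching $m$ (i) avoids $\{v_1,v_2,v_3\}$ entirely, (ii) covers exactly one of $v_1,v_2,v_3$, or (iii) covers at least two of them (hence, being a matching, exactly two, via one edge of the triangle when that edge is present). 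In cases (i) and (ii), $G\setminus\mathcal{N}(m)$ still contains the relevant sub-triangle configuration, and the restriction of $l$ to the surviving vertex set is again a modular relation (or a sum of them) on the smaller graph, hence lies in $\ker\Psi_{\gHa}$ by \cref{thm:graphcomukernel}; so those contributions cancel. In case (iii), exactly one endpoint pair among the triangle is used; here one must check that the matchings of $G$ and $G\cup\{e_1\}$ etc.\ that use such an edge, together with the resulting deleted graphs, are in bijection preserving $\Psi_{\gHa}$ — this is where the signs $+1,-1,-1,+1$ do the cancellation.

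The main obstacle is case (iii), i.e.\ ordered matchings that use one of the edges $e_1$, $e_2$, or $e_3$: the set $\mathcal{M}_k(G\cup\{e_1\})$ strictly contains $\mathcal{M}_k(G)$ (new matchings using $e_1$), and the sign bookkeeping across the four graphs must exactly offset these new terms. I expect the clean way to handle this is to reindex the double sum as ``first choose the edge of the triangle used by $m$ (or none), then choose the rest of the matching among the surviving vertices,'' observe that for a fixed partial matching on the non-triangle part the inner expression is a fixed linear combination of $\Psi_{\gHa}$ of graphs that are identical across the four terms (since the triangle edges all lie on $\{v_1,v_2,v_3\}$, which is either fully deleted or carries a modular-type relation on three vertices, and $\Psi_{\gHa}$ of a modular relation on $\leq 3$ vertices is already known to vanish), so that the signed total is zero. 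Once $D_k(l)=0$ is established for every modular and isomorphism relation, the factorization $D_k=\overline{D}_k\circ\Psi_{\gHa}$ follows from \cref{thm:graphcomukernel}, and applying $D_k(G_1)=\overline{D}_k(\Psi_{\gHa}(G_1))=\overline{D}_k(\Psi_{\gHa}(G_2))=D_k(G_2)$ completes the proof.
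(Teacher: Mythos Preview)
Your strategy---show that $D_k$ vanishes on the generators of $\ker\Psi_{\gHa}$ supplied by \cref{thm:graphcomukernel}---can be made to work, and it is a genuinely different route from the paper's. The paper never touches \cref{thm:graphcomukernel}; instead it uses Stanley's power-sum expansion $\Psi_{\gHa}(H)=\sum_{\lambda}c_{\lambda}p_{\lambda}$ with $c_\lambda=\sum_{S\subseteq E(H),\,\tau(S)=\lambda}(-1)^{\#S}$, reindexes the double sum over $(m,S)$ by $R=S\cup m$, and reads off that the $p_\lambda$-coefficient of $\sum_{m}\Psi_{\gHa}(H\setminus\mathcal{N}(m))$ equals $(-1)^k\binom{m_2(\lambda)+k}{k}k!\,c_{\lambda\cup(2^k)}$, hence is determined by $\Psi_{\gHa}(H)$. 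This matters because the entire appendix is an attempt to reach \cref{prop:goodinv} \emph{without} invoking \cref{thm:graphcomukernel}; your proof, by relying on the kernel description, defeats that purpose, whereas the paper's power-sum computation does not.

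On the details: your description of case~(ii) is not quite right. If $m$ is a Type~0 matching (using neither $e_1$ nor $e_2$) that covers exactly one triangle vertex, the deleted graph does \emph{not} retain a ``sub-triangle configuration''---two of the triangle edges die with that vertex. What actually happens is a pairwise collapse: e.g.\ if $v_1$ is deleted then $e_2$ dies in all four graphs, so the signed sum becomes $\Psi(H)-\Psi(H\cup e_1)-\Psi(H)+\Psi(H\cup e_1)=0$. Case~(iii) (the matching uses $e_1$, $e_2$, or $e_3$) also cancels, but again not via a smaller modular relation: if $m$ uses $e_1$ it only occurs in the second and fourth sums, and since $e_2$ shares $v_3$ with $e_1$ the two deleted graphs coincide; symmetrically for $e_2$; and if $m$ uses $e_3\in E(G)$ then all four deleted graphs coincide. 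So the argument closes, but the mechanism is direct cancellation rather than ``modular relation on fewer vertices'' in the non-trivial cases. Only case~(i) genuinely produces a smaller modular relation.
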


\begin{proof}
We use the power-sum basis $\{p_{\lambda } \}_{\lambda \vdash n} $ of $Sym_n$ introduced in \cite{stanley00}.

We show that for a generic graph $H$, the coefficients of the symmetric function $ \sum_{m \in \mathcal{M}_k(H)} \Psi_{\gHa}(H\setminus \mathcal{N}(m)) $ in the power-sum basis are a function of the coefficients of $\Psi_{\gHa } (H)$ in the power-sum basis.
Once this is established, the proposition follows.

For a graph $G$ and a set of edges $S \subseteq E(S)$, write $\tau (S) $ for the integer partition recording the size of the connected components of the graph $(V(G), S)$.
In \cite{stanley95} the following expression for the coefficients in the power-sum basis is shown:
$$\Psi_{\gHa} (G) = \sum_{\lambda \vdash n} p_{\lambda } \sum_{\substack {S \subseteq E(G) \\ \tau (S) = \lambda  }} (- 1)^{\# S } \, .$$

Suppose that $\Psi_{\gHa }(H) = \sum_{\lambda \vdash n} c_{\lambda } p_{\lambda }$.
For an integer partition $\lambda $ write $m_2(\lambda ) $ for the number of parts of size two, and write $\lambda \cup (2^k) $ for the integer partition resulting from $\lambda $ by adding $k$ extra parts of size two.
Then we have that 
\begin{equation}
\begin{split}
\sum_{m \in \mathcal{M}_k(H)} \Psi_{\gHa}(H\setminus  \mathcal{N}(m)) &= \sum_{m \in \mathcal{M}_k(H)} \sum_{\lambda \vdash n} p_{\lambda } \sum_{\substack{ S \subseteq E( H\setminus  \mathcal{N}(m))  \\ \tau (S) = \lambda }} (-1)^{\# S}  \\
																&= \sum_{\lambda \vdash n} p_{\lambda } \sum_{m \in \mathcal{M}_k(H)} \sum_{\substack{ S \subseteq E( H\setminus  \mathcal{N}(m) )  \\ \tau (S) = \lambda }} (-1)^{\# S}\, .
\end{split}
\end{equation}

Relabel the summands index by setting $R = S \cup m $, and note that for each $R\subseteq E(H) $ such that $\tau (R) = \lambda \cup (2^k )$, there are exactly $  \binom{m_2 (\lambda ) +  k}{ k} k! $ pairs $(S, m) $ of $m\in \mathcal{M}_k(H) $ and $S \subseteq E(H) $ such that  $S \cup m = R $, $\tau (S) = \lambda $ and $S \subseteq E(H \setminus  \mathcal{N}(m) )$.
Hence:
 
\begin{equation}
\begin{split}													
\sum_{m \in \mathcal{M}_k(H)} \Psi_{\gHa}(H\setminus  \mathcal{N}(m)) &= \sum_{\lambda \vdash n} p_{\lambda } \sum_{\substack{R \subseteq E(H) \\ \tau ( R) = \lambda \cup 2^k }} \binom{m_2 (\lambda ) +  k}{ k} k! (-1)^{\# R - k} \\
																&= \sum_{\lambda \vdash n} p_{\lambda } (-1)^k c_{\lambda \cup ( 2^k )} \binom{m_2 (\lambda ) + k }{ k } k! \, \, .
\end{split}
\end{equation}

Therefore, the sum $\sum_{m \in \mathcal{M}_k(H)} \Psi_{\gHa}(H\setminus  \mathcal{N}(m))$ is determined by $\Psi_{\gHa}(H)$.
\end{proof}

It follows from \cref{thm:tildeinpsi} and \cref{prop:RHSthm} that:

\begin{cor}\label{cor:weakgoodiv}
If $G_1, G_2$ are graphs such that $\Psi_{\gHa}(G_1) = \Psi_{\gHa}(G_2)$, then for every integer $k \geq 0 $ we have:
$$ \tilde{\Psi}(G_1)\Big|_{\substack{q_i = 1' \, \, i =1, \dots , k \\ q_i=0 \,\, i>k \\x_i= 0'' \, \, i =1, \dots , k}} = \tilde{\Psi}(G_2)\Big|_{\substack{q_i = 1' \, \, i =1, \dots , k \\ q_i=0 \,\, i>k \\x_i= 0'' \, \, i =1, \dots , k}} \, .$$
\end{cor}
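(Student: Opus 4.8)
The plan is to derive \cref{cor:weakgoodiv} as an immediate consequence of the two preceding results, \cref{thm:tildeinpsi} and \cref{prop:RHSthm}, with essentially no new work required. First I would recall the statement of \cref{thm:tildeinpsi}: for every $k \geq 0$ and every graph $G$,
\begin{equation*}
\frac{1}{2^k}\,\tilde{\Psi}(G)\Big|_{\substack{q_i = 1' \, \, i =1, \dots , k \\ q_i=0 \,\, i>k \\x_i= 0'' \, \, i =1, \dots , k}} = \sum_{m \in \mathcal{M}_k(G)} \Psi_{\gHa}(G\setminus \mathcal{N}(m))\uparrow_k \, .
\end{equation*}
Thus the left-hand side of the desired equality for $G_1$ (resp.\ $G_2$) equals $2^k$ times the sum $\sum_{m \in \mathcal{M}_k(G_1)} \Psi_{\gHa}(G_1\setminus \mathcal{N}(m))\uparrow_k$ (resp.\ with $G_2$). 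So it suffices to show these two sums agree.

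Next I would invoke \cref{prop:RHSthm}, which states precisely that if $\Psi_{\gHa}(G_1) = \Psi_{\gHa}(G_2)$ then
\begin{equation*}
\sum_{m \in \mathcal{M}_k(G_1)} \Psi_{\gHa}(G_1\setminus \mathcal{N}(m)) = \sum_{m \in \mathcal{M}_k(G_2)} \Psi_{\gHa}(G_2\setminus \mathcal{N}(m)) \, .
\end{equation*}
The only gap is that \cref{prop:RHSthm} is stated without the upward index shift $\uparrow_k$, whereas \cref{thm:tildeinpsi} produces the shifted version. But $\uparrow_k$ is simply the ring isomorphism of $Sym$ onto itself induced by relabelling the variables $x_i \mapsto x_{i+k}$ (extended over the countably many variables); applying this isomorphism to both sides of the equality furnished by \cref{prop:RHSthm} yields the shifted identity. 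Hence the two right-hand sides of \cref{thm:tildeinpsi} for $G_1$ and $G_2$ coincide, and multiplying through by $2^k$ gives exactly the claim of \cref{cor:weakgoodiv}.

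I do not anticipate a genuine obstacle here — the corollary is a formal combination of the two theorems. The only point requiring a word of care is the compatibility of the shift operator $\uparrow_k$ with the equality of \cref{prop:RHSthm}; I would make this explicit by noting that $\uparrow_k$ is $\mathbb{K}$-linear and injective on $Sym$, so equality of symmetric functions is preserved (and in fact reflected) under it. With that observation in place, the proof is two lines: substitute \cref{thm:tildeinpsi}, apply $\uparrow_k$ to the conclusion of \cref{prop:RHSthm}, and conclude.

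\begin{proof}[Proof of \cref{cor:weakgoodiv}]
Fix $k \geq 0$. By \cref{thm:tildeinpsi} applied to $G_1$ and to $G_2$, we have
\begin{equation*}
\tilde{\Psi}(G_j)\Big|_{\substack{q_i = 1' \, \, i =1, \dots , k \\ q_i=0 \,\, i>k \\x_i= 0'' \, \, i =1, \dots , k}} = 2^k \sum_{m \in \mathcal{M}_k(G_j)} \Psi_{\gHa}(G_j\setminus \mathcal{N}(m))\uparrow_k
\end{equation*}
for $j = 1, 2$. Since $\Psi_{\gHa}(G_1) = \Psi_{\gHa}(G_2)$, \cref{prop:RHSthm} gives
\begin{equation*}
\sum_{m \in \mathcal{M}_k(G_1)} \Psi_{\gHa}(G_1\setminus \mathcal{N}(m)) = \sum_{m \in \mathcal{M}_k(G_2)} \Psi_{\gHa}(G_2\setminus \mathcal{N}(m)) \, .
\end{equation*}
The operation $\uparrow_k$ is the $\mathbb{K}$-linear map on $Sym$ induced by shifting each variable index up by $k$; in particular it is well defined and respects equalities of symmetric functions. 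Applying $\uparrow_k$ to both sides of the previous identity and multiplying by $2^k$, we obtain
\begin{equation*}
\tilde{\Psi}(G_1)\Big|_{\substack{q_i = 1' \, \, i =1, \dots , k \\ q_i=0 \,\, i>k \\x_i= 0'' \, \, i =1, \dots , k}} = \tilde{\Psi}(G_2)\Big|_{\substack{q_i = 1' \, \, i =1, \dots , k \\ q_i=0 \,\, i>k \\x_i= 0'' \, \, i =1, \dots , k}} \, ,
\end{equation*}
as claimed.
\end{proof}
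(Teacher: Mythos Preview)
Your proposal is correct and matches the paper's approach exactly: the paper simply states that the corollary ``follows from \cref{thm:tildeinpsi} and \cref{prop:RHSthm}'' with no further argument, and your proof just spells out this immediate deduction, including the harmless observation that the index shift $\uparrow_k$ preserves equalities.
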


The following fact is immediate from the definition of $R$:

\begin{prop}\label{prop:rinlimmit}
Suppose that $f_1, f_2 \in R $ are such that for every pair of finite disjoint sets $I, J \subseteq \mathbb{N} $ we have
$$f_1 \Big|_{\substack{q_i = 1' \, \, i \in I \\ q_i=0 \,\, i \in P \\q_i=1 \, \, i \in J}} = f_2 \Big|_{\substack{q_i = 1' \, \, i \in I \\ q_i=0 \,\, i \in P \\q_i= 1 \, \, i \in J}} $$ 
where $P = \mathbb{N} \setminus (I \uplus J) $.
Then $f_1 = f_2 $ in $R$.
\end{prop}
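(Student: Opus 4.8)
The plan is to reduce the claim to showing that an element $g \in R$ all of whose specializations of the stated form vanish must itself be $0$, and then to apply this to $g = f_1 - f_2$. The starting point is a normal form in $R$: since the relation $q_i(q_i-1)^2 = 0$ rewrites as $q_i^3 = 2q_i^2 - q_i$, every element of $R$ can be written (uniquely) as a formal sum $g = \sum_{S}\sum_{e : S \to \{1,2\}} h_{S,e}\,\prod_{i \in S} q_i^{e_i}$, where $S$ runs over finite subsets of $\mathbb{N}$ and $h_{S,e} \in \mathbb{K}[[x_1,x_2,\dots]]$; I write $g_S$ for the partial sum of the terms whose $q$-support is exactly $S$.

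First I would dispose of the $q$-variables outside a chosen support. Assuming $g \neq 0$, pick a finite $S$ that is minimal for inclusion among those with $g_S \neq 0$ (such an $S$ exists since inclusion on finite sets is well-founded). Take an arbitrary partition $S = I \uplus J$ and consider the specialization with $q_i = 1'$ for $i \in I$, $q_i = 1$ for $i \in J$, and $q_i = 0$ for $i \in P := \mathbb{N}\setminus S$. The substitutions $q_i = 0$ kill every term of $g$ whose $q$-support is not contained in $S$, and by minimality the terms with support strictly inside $S$ already vanish, so only $g_S$ survives. A short computation then shows this specialization of $g$ equals $\sum_{e}\bigl(\prod_{i\in I} e_i\bigr) h_{S,e}$, because $q_i = 1$ turns $q_i^{e_i}$ into $1$ and $q_i = 1'$ (i.e. $\partial_{q_i}$ followed by $q_i = 1$) turns it into $e_i$, using crucially that $e_i \in \{1,2\}$ so the derivative does not annihilate the term.

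The heart of the argument is then a finite linear-algebra fact. Reparametrizing $e$ by the set $T = \{i \in S : e_i = 2\}$, the coefficient $\prod_{i \in I} e_i$ becomes $2^{|I \cap T|}$, so the vanishing of all these specializations says $\sum_{T \subseteq S} 2^{|I\cap T|}\, h_{S,T} = 0$ for every $I \subseteq S$. The coefficient matrix $\bigl(2^{|I\cap T|}\bigr)_{I,T\subseteq S}$ factors as the Kronecker product over $i \in S$ of the $2\times 2$ matrix $\left(\begin{smallmatrix}1&1\\1&2\end{smallmatrix}\right)$, which has determinant $1$; hence the whole matrix is invertible, forcing $h_{S,T} = 0$ for all $T$, i.e. $g_S = 0$, contradicting the choice of $S$. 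Therefore $g = 0$, and the proposition follows.

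I expect the only genuinely delicate point to be the bookkeeping around the normal form: in particular, checking that each of the three substitutions $q_i \mapsto 0$, $q_i \mapsto 1$, $q_i \mapsto 1'$ is well defined on the quotient $R$ (the first two kill $q_i(q_i-1)^2$, and the third kills both $q_i(q_i-1)^2$ and its $q_i$-derivative $(q_i-1)(3q_i-1)$) and that substitutions at distinct indices commute, so that the displayed expression for the specialization of $g_S$ is legitimate. Everything else is the invertibility of a Kronecker power of a fixed $2\times 2$ matrix, which is routine.
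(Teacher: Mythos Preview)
Your argument is correct. The paper does not actually give a proof of this proposition: it is introduced with the sentence ``The following fact is immediate from the definition of $R$'' and left at that. What you have written is a full, correct justification of that claim.

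Your approach is exactly the one that makes the statement ``immediate'': use the relation $q_i^3 = 2q_i^2 - q_i$ to put elements of $R$ in the normal form with $q_i$-exponents in $\{0,1,2\}$, observe that the three specializations $q_i\mapsto 0$, $q_i\mapsto 1$, $q_i\mapsto 1'$ separate the three basis elements $1,q_i,q_i^2$ (which is your $2\times 2$ matrix $\left(\begin{smallmatrix}1&1\\1&2\end{smallmatrix}\right)$ together with the trivial observation that $q_i\mapsto 0$ isolates the $q_i^0$ part), and then tensor over the indices. Your organization via a minimal $q$-support $S$ is a clean way to handle the infinitely many $q$-variables at once; the minimality ensures that after setting $q_i=0$ for $i\notin S$ only the $g_S$ piece survives, so the finite linear-algebra step applies. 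The delicate points you flag (well-definedness of the three specializations on the quotient, commutation of specializations at different indices) are all correctly handled.
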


In conclusion, to get an alternative proof of \cref{prop:goodinv} we need to establish a generalization of \cref{cor:weakgoodiv} that introduces specializations of the type $q_i = 1$, in order to apply \cref{prop:rinlimmit}.
Such a generalization has not been found by the author.

\bibliographystyle{alpha}
\bibliography{biblio}

\end{document}